\theoremstyle{plain}
\newtheorem{theorem}{Theorem}[section]
\newtheorem{lemma}[theorem]{Lemma}
\newtheorem{corollary}[theorem]{Corollary}
\newtheorem{prop}[theorem]{Proposition}
\newtheorem{conj}[theorem]{Conjecture}
\theoremstyle{remark}
\newtheorem{remark}[theorem]{Remark}
\newtheorem*{note*}{Note}
\newtheorem*{remark*}{Remark}
\newtheorem*{example*}{Example}
\theoremstyle{definition}
\newtheorem*{definition*}{Definition}
\newtheorem*{hypothesis*}{Hypothesis}
\newtheorem*{assumptions*}{Assumptions}
\newtheorem{definition}[theorem]{Definition}
\newcommand{\Z}{\mathbb{Z}}
\newcommand{\Q}{\mathbb{Q}}
\newcommand{\N}{\mathbb{N}}
\newcommand{\Aut}{\mathrm{Aut}}
\newcommand{\Gal}{\mathrm{Gal}}
\newcommand{\ab}{\mathrm{ab}}
\newcommand{\rad}{\mathrm{rad}}
\newcommand{\Hom}{\mathrm{Hom}}
\newcommand{\id}{\mathrm{id}}
\newcommand{\Mod}{\mathrm{Mod}}
\newcommand{\PMod}{\mathrm{PMod}}
\newcommand{\Ho}{\mathrm{Ho}}
\newcommand{\Ext}{\mathrm{Ext}}
\newcommand{\pd}{\mathrm{pd}}
\newcommand{\Ind}{\mathrm{Ind}}
\numberwithin{equation}{section}
\title[A generalization of a theorem of Swan and Iwasawa theory]
{A generalization of a theorem of Swan\\ 
	with applications to Iwasawa theory}
\author{Andreas Nickel}
\address{Universit\"{a}t Duisburg--Essen\\
	Fakult\"{a}t f\"{u}r Mathematik\\
	Thea-Leymann-Str. 9\\
	45127 Essen\\
	Germany}
\email{andreas.nickel@uni-due.de}
\urladdr{https://www.uni-due.de/$\sim$hm0251/english.html}
\subjclass[2010]{11R23, 11R34, 11S23, 11S25}
\keywords{Swan's theorem, projective lattices, Iwasawa algebras, 
	Iwasawa theory, Galois module structure}
\date{Version of 26th March 2019}
\begin{document}

\begin{abstract}
	Let $p$ be a prime and let $G$ be a finite group. By a celebrated theorem 
	of Swan, two finitely generated projective $\Z_p[G]$-modules $P$ and $P'$
	are isomorphic if and only if $\Q_p \otimes_{\Z_p} P$ and
	$\Q_p \otimes_{\Z_p} P'$ are isomorphic as $\Q_p[G]$-modules.
	We prove an Iwasawa-theoretic analogue of this result
	and apply this to the Iwasawa theory of local and global fields.
	We thereby determine the structure of natural Iwasawa modules
	up to (pseudo-)isomorphism.
\end{abstract}

\maketitle

\section{Introduction}
Let $p$ be a prime and let $\mathcal{G}$ be a profinite group.
We denote the complete group algebra of $\mathcal{G}$ over $\Z_p$
by $\Lambda(\mathcal{G})$. In classical Iwasawa theory one studies
modules over $\Lambda(\Gamma)$ with $\Gamma \simeq \Z_p$
up to pseudo-isomorphism.
Jannsen \cite{MR1097615} has proposed a method for studying 
$\Lambda(\mathcal{G})$-modules up to isomorphism, which in fact works
for more general $\mathcal{G}$. 

In equivariant Iwasawa theory one is often concerned with the case where
$\mathcal{G}$ is a one-dimensional $p$-adic Lie group.
Then $\mathcal{G}$ may be written as a semi-direct product $H \rtimes \Gamma$
with a finite normal subgroup $H$ and $\Gamma \simeq \Z_p$.
Jannsen's theory works particularly nice if $\mathcal{G} =
H \times \Gamma$ is a direct product and $p$ does not divide
the cardinality of $H$ (see \cite[Chapter XI, \S 2 and \S 3]{MR2392026}).

As a concrete example, let $L/K$ be a finite Galois extension of
$p$-adic fields with Galois group $H$, where
a $p$-adic field shall always mean a finite extension of $\Q_p$
in this article.
Let $L_{\infty}$ be the
cyclotomic $\Z_p$-extension of $L$. 
We denote the $n$-th layer of the $\Z_p$-extension $L_{\infty}/L$
by $L_n$ as usual.
Assume that $p$ does not divide
$|H|$ so that $\mathcal{G} := \Gal(L_{\infty}/K)$ decomposes into
a direct product $H \times \Gamma$ with $\Gamma \simeq \Z_p$.
Let us denote the group of principal units in a local field $F$
by $U^1(F)$ and consider the inverse limit 
\[
	U^1(L_{\infty}) := \varprojlim_n U^1(L_n),
\]
where the transition maps are given by the norm maps. 
Moreover, we let $X_{L_{\infty}}$ be the Galois group over $L_{\infty}$
of the maximal abelian pro-$p$-extension of $L_{\infty}$.
Then both $U^1(L_{\infty})$ and $X_{L_{\infty}}$ are finitely generated
$\Lambda(\mathcal{G})$-modules.
If $L$ contains a primitive $p$-th root of unity,
then by \cite[Theorems 11.2.3 and 11.2.4]{MR2392026} there are
(non-canonical) isomorphisms
of $\Lambda(\mathcal{G})$-modules
\begin{equation} \label{eqn:intro}
	X_{L_{\infty}} \simeq 
	U^1(L_{\infty}) \simeq 
	\Z_p(1) \oplus \Lambda(\mathcal{G})^{[K : \Q_p]}
\end{equation}
and without the $\Z_p(1)$-term otherwise. Similar statements in fact hold
for more arbitrary $\Z_p$-extensions of $L$. 
However, this does not remain true
if $\mathcal{G}$ contains an element of order $p$ (this follows from the
results recalled in \S \ref{subsec:Galois-coh} and in particular
from sequence \eqref{eqn:4term-sequence}, since $\Z_p$ then has
infinite projective dimension as a $\Lambda(\mathcal{G})$-module).
In this case, the structure of these $\Lambda(\mathcal{G})$-modules
has not yet been determined, although this is a very natural and 
important question of Iwasawa theory.

Now let $H$ be arbitrary, but assume for simplicity in the introduction 
that $\mathcal{G} = H \times \Gamma$ is a direct product.
Recall that a homomorphism 
of finitely generated $\Lambda(\Gamma)$-modules is a pseudo-isomorphism
if and only if for every height $1$ prime ideal $\mathfrak p$
of $\Lambda(\Gamma)$ it becomes an isomorphism after localization 
at $\mathfrak p$. We will show that \eqref{eqn:intro} remains true
after localization at such a prime ideal $\mathfrak p$.
This is of independent interest, but we point out that
our motivation originates from the equivariant Iwasawa
main conjecture for local fields formulated by the author in
\cite{local-mc}. The inverse limit of the principal units
along the unramified $\Z_p$-tower naturally appears as a cohomology group
of a certain perfect complex of $\Lambda(\mathcal{G})$-modules, which plays
a key role in the formulation of this conjecture. It has been shown
\cite[Corollary 6.7]{local-mc} that it suffices to prove the 
conjecture after localization at the height $1$ prime ideal $(p)$.
For this reason we are interested in the $\Lambda_{(p)}(\mathcal{G})$-module
structure of the localization of $U^1(L_{\infty})$ at $(p)$,
where for any height $1$ prime ideal
$\mathfrak p$ we denote the localization
of $\Lambda(\mathcal{G})$ at $\mathfrak p$ by
$\Lambda_{\mathfrak{p}}(\mathcal{G})$. 

Our method is not restricted to the local case. We also consider
finite Galois extensions $L/K$ of number fields and the cyclotomic
$\Z_p$-extension $L_{\infty}$ of $L$. Then $\mathcal{G} :=
\Gal(L_{\infty}/K)$ is again a one-dimensional $p$-adic Lie group.
Let $S$ be a finite set of places of $K$ containing all the archimedean
places and all places that ramify in $L_{\infty}/K$. We then determine
the $\Lambda_{\mathfrak{p}}(\mathcal{G})$-module structure of the inverse
limit of the ($p$-completion of the) $S$-units, localized at $\mathfrak p$.
We also consider the natural Iwasawa module $X_S$, the Galois group over
$L_{\infty}$ of the maximal abelian pro-$p$-extension unramified outside $S$.

Our method has two main ingredients: The homotopy theory of Iwasawa
modules developed by Jannsen \cite{MR1097615} and, as a new ingredient,
an Iwasawa-theoretic analogue of a theorem of Swan \cite{MR0138688}.
The latter states that for a finite group $G$ two projective $\Z_p[G]$-modules
$P$ and $P'$ are isomorphic if and only if
$\Q_p \otimes_{\Z_p} P$ and
$\Q_p \otimes_{\Z_p} P'$ are isomorphic as $\Q_p[G]$-modules.
Accordingly, we prove that two finitely generated projective
$\Lambda_{\mathfrak{p}}(\mathcal{G})$-modules are isomorphic if and only
if this is true after base change to $\mathcal{Q}(\mathcal{G})$,
the total ring of fractions of $\Lambda(\mathcal{G})$ and thus also of
$\Lambda_{\mathfrak{p}}(\mathcal{G})$. This then allows us to compute the
projective summands of our Iwasawa modules. 

If $\mathcal{G} = H \times \Gamma$ is a direct product, then our result
is an easy consequence of Swan's original theorem. This is because then
$\Lambda(\mathcal{G})$ is obtained from the group ring $\Z_p[H]$ by extension
of scalars. However, the case of a semi-direct product is much harder,
and in fact our result cannot be directly deduced from Swan's theorem
or even from a more general result due to Hattori \cite{MR0175950} (see Remark
\ref{rem:Hattori} for details).

One method of proving Swan's theorem is via the Cartan--Brauer triangle,
since the Cartan map is injective in this case by a theorem of Brauer. This
method may be found in \cite[\S 21]{MR632548} and we largely follow
this approach. In fact, we construct a `Cartan--Brauer square' in a
rather abstract situation and show that the injectivity of the Cartan map
always implies a result in the style of Swan's theorem. The case of localized
Iwasawa algebras is then implied by a theorem of
Ardakov and Wadsley \cite{MR2585122} on the Cartan map
of crossed product algebras. As a by-product we deduce the surjectivity
of certain connecting homomorphisms that appear in relative
$K$-theory of Iwasawa algebras.\\

This article is organized as follows.
In \S \ref{sec:Swan} we first construct the Cartan--Brauer square,
a generalization of the Cartan--Brauer triangle in the case of group rings.
We then propose an abstract version of Swan's theorem (Corollary
\ref{cor:abstract-Swan}).
Viewing the localized Iwasawa algebras as crossed products allows us to
deduce our Iwasawa-theoretic analogue of Swan's theorem from the
aforementioned result of Ardakov and Wadsley 
(see Corollary \ref{cor:Swan-Iwasawa}).
In \S \ref{sec:homotopy} we review the homotopy theory of Iwasawa modules
and prove several auxiliary results for later use.
In \S \ref{sec:local} we study the Iwasawa theory of local fields.
In particular, our analogue of Swan's theorem allows us to show that
\eqref{eqn:intro} remains true for arbitrary one-dimensional
$p$-adic Lie extensions of $K$
after localization at an arbitrary height
$1$ prime ideal. Finally, we consider cyclotomic $\Z_p$-extensions
of number fields in \S \ref{sec:global}, where we prove analogues of
\cite[Theorem 11.3.11]{MR2392026}
for arbitrary one-dimensional $p$-adic Lie extensions
containing the cyclotomic $\Z_p$-extension.

\subsection*{Acknowledgements}
The author acknowledges financial support provided by the 
Deutsche Forschungsgemeinschaft (DFG) 
within the Heisenberg programme (No.\, NI 1230/3-1).
I thank Henri Johnston for his valuable comments on
an earlier version of this article and the referee for his/her
careful reading of this manuscript.

\subsection*{Notation and conventions}
All rings are assumed to have an identity element and all modules are assumed
to be left modules unless otherwise stated.
If $K$ is a field, we denote its absolute Galois group by $G_K$.
If $R$ is a ring and $M$ is an $R$-module,
we let $\pd_R(M)$ be the projective dimension
of $M$ over $R$.

\section{A generalization of Swan's theorem} \label{sec:Swan}

\subsection{Grothendieck groups} 
For further details and background on Grothendieck groups and
algebraic $K$-theory 
used in this section, we refer the reader to
\cite{MR892316} and \cite{MR0245634}.
Let $\Lambda$ be a noetherian ring and 
$\Mod(\Lambda)$ be the category of all 
$\Lambda$-modules. We denote the full subcategories of
all finitely generated and finitely generated projective $\Lambda$-modules by
$\Mod^{fg}(\Lambda)$ and $\PMod(\Lambda)$, respectively.
We let $G_0(\Lambda)$ and $K_{0}(\Lambda)$ be the Grothendieck groups
of $\Mod^{fg}(\Lambda)$ and $\PMod(\Lambda)$, respectively 
(see \cite[\S 38]{MR892316}).
The natural inclusion functor $\PMod(\Lambda) \rightarrow
\Mod^{fg}(\Lambda)$ induces a homomorphism
\[
	c: K_0(\Lambda) \longrightarrow G_0(\Lambda)
\]
which is called the \emph{Cartan map} or
the \emph{Cartan homomorphism}. We recall the following
result (see \cite[Proposition 38.22]{MR892316}).

\begin{lemma} \label{lem:equality-in-K0}
	Let $P, P' \in \PMod(\Lambda)$. Then we have
	$[P] = [P']$ in $K_0(\Lambda)$ if and only if
	$P \oplus Q \simeq P' \oplus Q$ for some $Q \in \PMod(\Lambda)$.
\end{lemma}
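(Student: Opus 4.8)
The plan is to identify $K_0(\Lambda)$ with the group completion (Grothendieck group) of the commutative monoid $\mathcal{M}$ whose elements are the isomorphism classes of objects of $\PMod(\Lambda)$ and whose addition is induced by $\oplus$; this is essentially the content of the construction recalled in \cite[\S 38]{MR892316}. Granting this identification, the statement is the standard criterion for when two elements of a commutative monoid become equal in its group completion. The ``if'' direction is immediate: an isomorphism $P \oplus Q \simeq P' \oplus Q$ yields $[P] + [Q] = [P'] + [Q]$ in $K_0(\Lambda)$, and hence $[P] = [P']$. For the converse I would argue directly from the defining presentation of $K_0(\Lambda)$.

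Concretely, write $K_0(\Lambda) = F/R$, where $F$ is the free abelian group on symbols $\langle P \rangle$, one for each isomorphism class of object in $\PMod(\Lambda)$, and $R$ is the subgroup generated by the elements $\langle A \oplus B \rangle - \langle A \rangle - \langle B \rangle$. Suppose $[P] = [P']$ in $K_0(\Lambda)$. Then $\langle P \rangle - \langle P' \rangle$ lies in $R$, so there is a finite identity $\langle P \rangle - \langle P' \rangle = \sum_j n_j\left(\langle A_j \oplus B_j \rangle - \langle A_j \rangle - \langle B_j \rangle\right)$ in $F$ with $n_j \in \Z$. Moving every term with a negative coefficient to the opposite side yields an equality in $F$ between two $\Z_{\geq 0}$-linear combinations of basis elements. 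Since $F$ is free on the set of isomorphism classes, the two multisets of isomorphism classes occurring on the two sides must coincide; forming the direct sum of a representative of each summand, taken with the appropriate multiplicity, therefore produces two isomorphic modules $M \simeq M'$ in $\PMod(\Lambda)$.

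It then remains to observe that $M$ and $M'$ have the required shape. By construction $M \simeq P \oplus Q$, where $Q$ is the direct sum of the modules $A_j$ and $B_j$ with $n_j < 0$ together with the modules $A_j \oplus B_j$ with $n_j > 0$, each taken with the appropriate multiplicity; in particular $Q \in \PMod(\Lambda)$, being a finite direct sum of finitely generated projective modules. On the other side, using only commutativity and associativity of $\oplus$ and the isomorphism $(A \oplus B)^{\oplus n} \simeq A^{\oplus n} \oplus B^{\oplus n}$, one rearranges $M'$ into the form $P' \oplus Q$ with the very same $Q$. Hence $P \oplus Q \simeq M \simeq M' \simeq P' \oplus Q$, as desired.

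The argument is entirely formal, and I do not expect a genuine obstacle. The only point requiring care is the passage from the relation in $F$ to an honest module isomorphism: one must keep track of signs and multiplicities when clearing the negative terms across the equality, and one must remember that equality in the free abelian group $F$ encodes nothing more than equality of the underlying multisets of isomorphism classes, so that it can be upgraded to an isomorphism of modules only after choosing representatives and invoking the symmetric monoidal structure of $\left(\PMod(\Lambda), \oplus\right)$.
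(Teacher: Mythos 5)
Your argument is correct: it is the standard group-completion proof, with the negative terms cleared across the relation, the resulting equality of multisets in the free abelian group upgraded to a module isomorphism, and the identification of the two complements via $(A\oplus B)^{\oplus n}\simeq A^{\oplus n}\oplus B^{\oplus n}$. The paper itself gives no proof but simply cites \cite[Proposition 38.22]{MR892316}, and your write-up is precisely the standard proof of that cited proposition, so there is nothing further to compare.
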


We write $K_{1}(\Lambda)$ for the Whitehead group of $\Lambda$,
which is the abelianized infinite general linear group
(see \cite[\S 40]{MR892316}).
We denote the relative algebraic $K$-group corresponding to a ring
homomorphism $\Lambda \rightarrow \Lambda'$ by $K_0(\Lambda, \Lambda')$.
We recall that $K_{0}(\Lambda, \Lambda')$ is 
an abelian group with generators $[X,g,Y]$ where
$X$ and $Y$ are finitely generated projective $\Lambda$-modules
and $g:\Lambda' \otimes_{\Lambda} X \rightarrow \Lambda' \otimes_{\Lambda} Y$ 
is an isomorphism of $\Lambda'$-modules;
for a full description in terms of generators and relations, 
we refer the reader to \cite[p.\ 215]{MR0245634}.
Furthermore, there is a long exact sequence of relative $K$-theory
(see  \cite[Chapter 15]{MR0245634})
\begin{equation} \label{eqn:long-exact-seq}
K_{1}(\Lambda) \longrightarrow K_{1}(\Lambda') \xrightarrow{\partial_{\Lambda,\Lambda'}}
K_{0}(\Lambda,\Lambda') \longrightarrow K_{0}(\Lambda) 
\longrightarrow K_{0}(\Lambda').
\end{equation}

\subsection{The decomposition map}
Let $R$ be a discrete valuation ring with maximal ideal $\mathfrak m$
and uniformizer $\pi$. We denote the field of fractions of $R$ by $K$
and let $k := R / \mathfrak m$ be the residue field.
Let $A$ be a finite dimensional $K$-algebra and let $\Lambda$
be an $R$-order in $A$. We put $\Omega := k \otimes_R \Lambda$,
which is a finite dimensional $k$-algebra.
Note that $A$ and $\Omega$ are artinian (and thus noetherian) 
rings so that every
finitely generated module has a composition series and satisfies
the Jordan--H\"older theorem \cite[Theorem 3.11]{MR632548}.

We also observe that every finitely generated $A$-module $V$ contains a full
$\Lambda$-lattice. Indeed, if $v_1, \dots, v_m$ is a $K$-basis
of $V$, then $M := \sum_{i=1}^m \Lambda v_i$ is a 
$\Lambda$-submodule of $V$ such that $K \otimes_R M = V$.
As $R$ is a discrete valuation ring, every finitely generated
torsionfree $R$-module is in fact free, and so $M$ is a full 
$\Lambda$-lattice in $V$.
We put $\overline{M} := M / \mathfrak m M = k \otimes_R M$,
which is a finitely generated $\Omega$-module.

\begin{prop} \label{prop:decomposition-map}
	There is a unique homomorphism of abelian groups
	\[
		d: G_0(A) \longrightarrow G_0(\Omega)
	\]
	such that for each finitely generated $A$-module $V$ one has
	$d([V]) = [\overline{M}]$, where $M$ is any full $\Lambda$-lattice
	in $V$.
\end{prop}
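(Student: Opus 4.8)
The statement asserts that the decomposition map $d\colon G_0(A)\to G_0(\Omega)$ is well defined, i.e.\ that $[\overline M]$ depends neither on the choice of full $\Lambda$-lattice $M$ in $V$ nor — once well-definedness on objects is established — on the choice of representative $V$ of a class in $G_0(A)$. The plan is to first settle well-definedness on objects, and then promote this to a homomorphism of Grothendieck groups by checking additivity on short exact sequences.

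For well-definedness on objects, suppose $M$ and $N$ are two full $\Lambda$-lattices in a fixed finitely generated $A$-module $V$. The key reduction is the standard observation that any two full lattices are "commensurable": since $M$ and $N$ both span $V$ over $K$ and are finitely generated over $R$, there is an integer $n\ge 0$ with $\pi^n N\subseteq M$ (and symmetrically). Replacing $N$ by $\pi^n N$ changes nothing, as multiplication by $\pi^n$ is a $\Lambda$-isomorphism $N\xrightarrow{\sim}\pi^n N$, hence $[\,\overline{N}\,]=[\,\overline{\pi^n N}\,]$ in $G_0(\Omega)$. So we may assume $N\subseteq M$. Now I would induct on the length $\ell_R(M/N)$, which is finite because $M/N$ is a torsion $R$-module that is finitely generated over the DVR $R$. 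If $\ell_R(M/N)=0$ then $M=N$. Otherwise pick a $\Lambda$-submodule $M'$ with $N\subseteq M'\subsetneq M$ and $\ell_R(M/M')=1$; then $M/M'$ is a simple $R/\mathfrak m=k$-module, in fact a simple $\Omega$-module. The short exact sequence $0\to M'\to M\to M/M'\to 0$ of $\Lambda$-modules, reduced mod $\mathfrak m$, gives an exact sequence of $\Omega$-modules
\[
0\to \operatorname{Tor}_1^R(k, M/M')\to \overline{M'}\to \overline{M}\to \overline{M/M'}\to 0,
\]
and since $M/M'$ is a one-dimensional $k$-vector space, $\operatorname{Tor}_1^R(k,M/M')\cong k$ as well (it is the $\pi$-torsion of $M/M'$). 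Hence in $G_0(\Omega)$ we get $[\overline{M'}]-[\overline{M}] = [M/M'] - [\operatorname{Tor}_1^R(k,M/M')] = [M/M']-[M/M']=0$, so $[\overline{M'}]=[\overline M]$. By induction $[\overline N]=[\overline{M'}]=[\overline M]$, as desired.

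With $[\overline M]$ depending only on $V$, one obtains a well-defined assignment $V\mapsto [\overline M]$ on objects; to get a homomorphism $G_0(A)\to G_0(\Omega)$ it remains to check that it respects short exact sequences. Given $0\to V'\to V\to V''\to 0$ of finitely generated $A$-modules, choose a full $\Lambda$-lattice $M$ in $V$, set $M':=M\cap V'$ and $M'':=\mathrm{image}(M)$ in $V''$; then $M'$ is a full $\Lambda$-lattice in $V'$, $M''$ is a full $\Lambda$-lattice in $V''$ (both are finitely generated over the noetherian $R$, torsionfree, and span the respective spaces since $M$ spans $V$), and $0\to M'\to M\to M''\to 0$ is exact with $M''$ $R$-torsionfree, hence $R$-free. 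Because $M''$ is $R$-free we have $\operatorname{Tor}_1^R(k,M'')=0$, so reducing mod $\mathfrak m$ yields a \emph{short} exact sequence $0\to\overline{M'}\to\overline{M}\to\overline{M''}\to 0$ of $\Omega$-modules, whence $[\overline M]=[\overline{M'}]+[\overline{M''}]$ in $G_0(\Omega)$. By the universal property of $G_0(A)$ this produces the required homomorphism $d$, which is automatically unique since $G_0(A)$ is generated by the classes $[V]$. I expect the main technical point to be the commensurability/induction argument for independence of the lattice — in particular, keeping careful track of the $\operatorname{Tor}_1^R$ term so that the length-one quotients cancel — while the additivity and uniqueness steps are routine.
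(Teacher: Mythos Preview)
Your overall strategy matches the paper's: reduce to the case $N\subseteq M$, pass to an intermediate $\Lambda$-submodule with simple quotient, and show the reductions agree in $G_0(\Omega)$; your additivity and uniqueness arguments are essentially identical to the paper's.

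However, there is a genuine gap in your induction step. You assert the existence of a $\Lambda$-submodule $M'$ with $N\subseteq M'\subsetneq M$ and $\ell_R(M/M')=1$, but such an $M'$ need not exist. If $M/N$ happens to be a simple $\Omega$-module of $k$-dimension greater than $1$ --- which certainly occurs, e.g.\ for $\Lambda=\Z_p[S_3]$ with $p>3$ and $V$ the two-dimensional irreducible representation, taking $N=\pi M$ --- then there is no $\Lambda$-submodule strictly between $N$ and $M$ at all, and in particular none of $R$-colength $1$. The fix is easy: instead take $M'$ to be a \emph{maximal $\Lambda$-submodule} of $M$ containing $N$. Then $M/M'$ is a simple $\Lambda$-module, and since $\pi(M/M')$ is a $\Lambda$-submodule, Nakayama (over the local ring $R$) forces $\pi(M/M')=0$, so $M/M'$ is a simple $\Omega$-module. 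Your $\mathrm{Tor}$ computation then goes through verbatim, because for any $\Omega$-module $S$ one has $\mathrm{Tor}_1^R(k,S)\cong S\cong k\otimes_R S$ as $\Omega$-modules, and the induction on $\ell_R(M/N)$ still terminates since $\ell_R(M/M')\geq 1$.

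With this correction your argument and the paper's are minor variants of one another. The paper likewise reduces to $N$ maximal in $M$, obtains $\pi M\subseteq N$ via Nakayama, and then compares composition factors directly through the chain $\pi N\subseteq\pi M\subseteq N\subseteq M$ together with the isomorphism $M/N\cong\pi M/\pi N$, rather than invoking the $\mathrm{Tor}$ long exact sequence.
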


\begin{definition}
	The homomorphism $d: G_0(A) \rightarrow G_0(\Omega)$ in Proposition
	\ref{prop:decomposition-map} is called the 
	\emph{decomposition map}.
\end{definition}

\begin{proof}[Proof of Proposition \ref{prop:decomposition-map}]
	The proof is similar to that of \cite[Proposition 16.17]{MR632548},
	where the case of group rings is considered. 
	We repeat the argument for
	convenience of the reader.
	
	Let $V$ be a finitely generated $A$-module and choose a full
	$\Lambda$-lattice $M$ in $V$. We first show that the class
	$[\overline{M}]$ in $G_0(\Omega)$ does not depend on the choice of $M$.
	For this let $N$ be a second full $\Lambda$-lattice in $V$.
	By \cite[Proposition 16.6]{MR632548} we have 
	$[\overline{M}] = [\overline{N}]$ in $G_0(\Omega)$ if and only if
	$\overline{M}$ and $\overline{N}$ have the same composition factors.
	As $M+N$ is also a full lattice in $V$, we may assume
	that $N$ is properly contained in $M$. Since $M$ is noetherian,
	we may in addition assume that $N$ is a maximal $\Lambda$-submodule
	of $M$. We claim that $\pi M \subseteq N$. Otherwise, the chain 
	of inclusions $N \subsetneq N + \pi M \subseteq M$ gives
	$N + \pi M = M$ by maximality of $N$. Then
	Nakayama's Lemma implies $N = M$, contrary to our assumption.
	Now consider the chain of inclusions
	\[
		\pi N \subseteq \pi M \subseteq N \subseteq M.
	\]
	We see that $\overline{M}$ and $\overline{N}$ share the composition
	factors of $N / \pi M$. Thus it suffices to show that
	$M/N$ and $\pi M / \pi N$ have the same composition factors;
	but this is clear as multiplication by $\pi$ induces an isomorphism
	$M/N \simeq \pi M / \pi N$.
	
	Now define $d$ by $d([V]) = [\overline{M}]$. We have to show
	that $d$ is additive on short exact sequences. Given a short
	exact sequence of finitely generated $A$-modules
	\[
		0 \longrightarrow V_1 \longrightarrow V_2 
		\stackrel{\phi}{\longrightarrow} V_3  
		\longrightarrow 0,
	\]
	choose a full $\Lambda$-lattice $M_2$ in $V_2$ and define
	$M_3 := \phi(M_2)$ and $M_1 := M_2 \cap V_1$. Then we have a short exact
	sequence of $\Lambda$-modules
	\begin{equation} \label{eqn:ses-lattices}
	0 \longrightarrow M_1 \longrightarrow M_2 
	\stackrel{\phi}{\longrightarrow} M_3  
	\longrightarrow 0,
	\end{equation}
	and it is not hard to see that $M_1$ and $M_3$ are full
	$\Lambda$-lattices in $V_1$ and $V_3$, respectively.
	As $M_3$ is a free $R$-module, tensoring sequence
	\eqref{eqn:ses-lattices} with $k$ preserves exactness
	so that we obtain a short exact sequence of $\Omega$-modules
	\[
	0 \longrightarrow \overline{M_1} \longrightarrow \overline{M_2}
	\longrightarrow \overline{M_3} \longrightarrow 0.
	\]
	Thus we get 
	\[
		d([V_2]) = [\overline{M_2}] =
		[\overline{M_1}] + [\overline{M_3}] = d([V_1]) + d([V_3])
	\]
	as desired.
\end{proof}

\subsection{The Cartan--Brauer square}
We denote the radical of a ring $S$ by $\rad(S)$. We put
$\tilde{\Lambda} := \Lambda/ \rad(\Lambda) = \Omega/ \rad(\Omega)$.
Then $\tilde{\Lambda}$ is a semisimple artinian ring,
and $\Lambda$ is \emph{semiperfect} if and only if every idempotent
in $\tilde{\Lambda}$ is the image of an idempotent in $\Lambda$.
Note that $\Omega$ is always semiperfect 
by \cite[Propositions 6.5 and 6.7]{MR632548}.

\begin{remark}
	The ring $\Lambda$ is semiperfect whenever $R$ is complete
	\cite[Propositions 6.5 and 6.7]{MR632548} or $A$ is a split
	semisimple $K$-algebra \cite[Exercise 16]{MR632548}.
\end{remark}

Let us consider the following commutative square
\begin{equation} \label{eqn:Cartan-diagram}
\xymatrix{
	K_0(\Lambda) \ar[r]^b \ar[d]^{e} & K_0(\Omega) \ar[d]^c \\
	G_0(A) \ar[r]^d & G_0(\Omega),
}
\end{equation}
where for $P \in \PMod(\Lambda)$ we have $b([P]) = [\overline P]$
and $e([P]) = [K \otimes_R P]$. We call \eqref{eqn:Cartan-diagram}
the \emph{Cartan--Brauer square}.

\begin{prop} \label{prop:b-injective}
	The homomorphism $b: K_0(\Lambda) \rightarrow K_0(\Omega)$
	is injective. If $\Lambda$ is semiperfect, then $b$ is an
	isomorphism.
\end{prop}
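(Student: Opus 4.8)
The plan is to relate the map $b$ to the decomposition map via the Cartan--Brauer square and to exploit the structure of $\Omega$ as an artinian ring with well-understood $K_0$ and $G_0$. First I would record the standard description of these groups in terms of indecomposable projectives and simple modules. Since $\Omega$ is artinian (hence semiperfect), $K_0(\Omega)$ is free abelian on the classes of the indecomposable projective $\Omega$-modules $[\overline{P_1}], \dots, [\overline{P_r}]$, one for each isomorphism class of simple $\Omega$-module, and $G_0(\Omega)$ is free abelian on the classes $[S_1], \dots, [S_r]$ of the simple modules themselves. The Cartan map $c: K_0(\Omega) \to G_0(\Omega)$ is then given by the Cartan matrix $C = (c_{ij})$, where $\overline{P_i}$ has $S_j$ as a composition factor with multiplicity $c_{ij}$; this matrix has positive diagonal entries and is (in general) not invertible, so $c$ itself need not be injective.

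The key point is therefore to use the \emph{full} square rather than just the right vertical arrow. I would argue as follows. Let $x \in K_0(\Lambda)$ with $b(x) = 0$. Every element of $K_0(\Lambda)$ can be written as $[P] - [P']$ with $P, P' \in \PMod(\Lambda)$; by Lemma \ref{lem:equality-in-K0} it suffices to show that $b([P]) = b([P'])$ forces $P \oplus Q \simeq P' \oplus Q$ for some projective $Q$, and in fact it is cleaner to first reduce to showing $b$ is injective on the image of $K_0(\Lambda)$ by tracking ranks. Over $\Lambda$, a finitely generated projective module $P$ has a well-defined rank vector coming from the decomposition of $\tilde\Lambda = \Lambda/\rad(\Lambda)$ into simple components, and reduction mod $\mathfrak m$ induces an isomorphism $\Lambda/\rad(\Lambda) \simeq \Omega/\rad(\Omega)$ by the very definition of $\tilde\Lambda$ in the paragraph preceding the square. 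Consequently the composite $\PMod(\Lambda) \to \PMod(\Omega) \to \PMod(\tilde\Lambda)$ induces on $K_0$ a map that detects exactly the rank vector of a $\Lambda$-projective. Since $b([P]) = [\overline P]$ and $\overline P$ is a projective $\Omega$-module whose rank vector over $\tilde\Lambda$ is the same as that of $P$, the map $b$ followed by the natural map $K_0(\Omega) \to K_0(\tilde\Lambda)$ recovers the rank vector; as a finitely generated projective $\Lambda$-module is determined up to stable isomorphism by finitely many invariants that refine the rank vector, injectivity of $b$ amounts to: the rank vector over $\tilde\Lambda$ together with the class $[\overline P] \in K_0(\Omega)$ already carries all of $K_0(\Lambda)$. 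More efficiently: the composite $K_0(\tilde\Lambda) \cong K_0(\Omega)$ need not hold, but $K_0(\Lambda) \to K_0(\Omega)$ factors as $K_0(\Lambda) \to K_0(\tilde\Lambda) \to K_0(\Omega)$ is false in general, so instead I would use idempotent lifting.

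Here is the cleanest route, and the one I expect to be the heart of the argument. Both $\Lambda$ and $\Omega$ have the \emph{same} radical quotient $\tilde\Lambda$, and reduction mod $\mathfrak m$ takes idempotents of $\Lambda$ to idempotents of $\Omega$ with the same image in $\tilde\Lambda$. Every finitely generated projective $\Omega$-module is, by semiperfectness, a direct sum of the indecomposable $\overline{P_i}$, and $\overline{P_i} = \Omega \bar e_i$ for a primitive idempotent $\bar e_i$ lifting a primitive idempotent of $\tilde\Lambda$. Since $\Omega = \Lambda/\mathfrak m\Lambda$ is a quotient of $\Lambda$ and $\mathfrak m\Lambda \subseteq \rad(\Lambda)$, idempotents of $\Omega$ lift to idempotents of $\Lambda$ precisely when $\Lambda$ is semiperfect, but even without that, the key surjection we want is on $K_0$: I claim the map $b$ is split-injective because the functor $- \otimes_R k = \overline{(\,\cdot\,)}$ from $\PMod(\Lambda)$ to $\PMod(\Omega)$ has the property that $P$ is a direct summand of a free $\Lambda$-module $\Lambda^n$, so $\overline P$ is a summand of $\Omega^n$, and the rank $n$ and the complementary summand are visible after a further reduction to $\tilde\Lambda$, where $\PMod(\tilde\Lambda) \cong \PMod(\Omega) \cong \PMod(\Lambda/\rad\Lambda)$ all have the same $K_0$. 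Concretely, the composite $K_0(\Lambda) \xrightarrow{b} K_0(\Omega) \to K_0(\tilde\Lambda)$ equals the composite $K_0(\Lambda) \to K_0(\Lambda/\rad\Lambda) = K_0(\tilde\Lambda)$, and the latter is injective because over a semiperfect ring (which $\Lambda$ is not assumed to be — so this is exactly the subtlety) ... The honest fix: $\Omega$ \emph{is} semiperfect, so $K_0(\Omega) \xrightarrow{\sim} K_0(\tilde\Lambda)$, and the composite $K_0(\Lambda)\to K_0(\Omega)\xrightarrow{\sim}K_0(\tilde\Lambda)$ is the map sending $[P]$ to $[P/\rad(\Lambda)P]$; this map is injective by \cite[Proposition 38.22]{MR892316} (or by the argument that two projectives with the same top are isomorphic when the base is semiperfect — applied here not to $\Lambda$ but after reduction), hence $b$ is injective. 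When $\Lambda$ itself is semiperfect, the same reasoning applied symmetrically — together with idempotent lifting from $\tilde\Lambda$ back to $\Lambda$ — shows $K_0(\Lambda)\xrightarrow{\sim}K_0(\tilde\Lambda)$ as well, so $b$ is an isomorphism. I would be careful to cite the precise statement in \cite{MR632548} identifying $K_0$ of a semiperfect ring with the free group on indecomposable projectives, and to isolate the one nontrivial input, namely that reduction along $\Lambda \twoheadrightarrow \tilde\Lambda$ is injective on $K_0$ even when $\Lambda$ is not semiperfect, which follows because $\rad(\Lambda)$ is contained in the radical and projectives lift their decompositions uniquely along radical quotients; the main obstacle is getting this last point stated without accidentally assuming semiperfectness of $\Lambda$.
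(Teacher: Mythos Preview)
Your proposal circles around the right objects but never closes the gap, and you yourself flag this at the end. Your eventual strategy is to factor $b$ as $K_0(\Lambda)\to K_0(\tilde\Lambda)\xleftarrow{\sim} K_0(\Omega)$, using that $\Omega$ is semiperfect so that $K_0(\Omega)\to K_0(\tilde\Lambda)$ is an isomorphism. That factorization is fine, but it does not reduce anything: showing that $K_0(\Lambda)\to K_0(\tilde\Lambda)$ is injective is \emph{exactly} the same problem as showing that $b$ is injective, precisely because $K_0(\Omega)\cong K_0(\tilde\Lambda)$. Your attempted justifications for that injectivity do not work: Lemma~\ref{lem:equality-in-K0} (the ``Proposition 38.22'' you invoke) only translates equality in $K_0$ into a stable isomorphism and says nothing about the map to $K_0(\tilde\Lambda)$; and the slogan ``two projectives with the same top are isomorphic'' is a theorem about semiperfect rings, which $\Lambda$ is not assumed to be. So as written you have no argument for injectivity of $b$ in the general case.

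The paper fills exactly this hole, and the missing idea is a single move: pass to the completion. Given $[\overline P]=[\overline{P'}]$ in $K_0(\Omega)$, one uses Lemma~\ref{lem:equality-in-K0} to get $\overline P\oplus S\simeq\overline{P'}\oplus S$ with $S$ free, so $S=\overline Q$ for a free $\Lambda$-module $Q$. To conclude $P\oplus Q\simeq P'\oplus Q$ one may, by \cite[Proposition~30.17]{MR632548}, replace $R$ by its completion; over a complete DVR the order $\Lambda$ \emph{is} semiperfect, and then \cite[Proposition~6.17(iv)]{MR632548} gives the desired isomorphism from the isomorphism of reductions. This completion trick is precisely the ``one nontrivial input'' you were looking for and could not supply. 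For surjectivity when $\Lambda$ is semiperfect, your idempotent-lifting idea is the right one; the paper packages it cleanly via projective covers: given $Q\in\PMod(\Omega)$, lift $\tilde Q=Q/\rad(\Omega)Q$ to $P\in\PMod(\Lambda)$ using \cite[Theorem~6.23]{MR632548}, and then $\overline P$ and $Q$ are both projective covers of $\tilde Q$, hence isomorphic.
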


\begin{proof}
	Let $P,P' \in \PMod(\Lambda)$ and assume that $[\overline{P}]
	=[\overline{P'}]$ in $K_0(\Omega)$. 
	By Lemma \ref{lem:equality-in-K0} there exists
	an $S\in \PMod(\Omega)$ such that
	$\overline{P} \oplus S \simeq \overline{P'} \oplus S$.
	We may assume that $S$ is free and thus in particular that
	$S \simeq \overline Q$ for some $Q \in \PMod(\Lambda)$.
	We claim that $P \oplus Q \simeq P' \oplus Q$.
	Then clearly $[P] = [P']$ in $K_0(\Lambda)$ and thus $b$ is injective.
	For the claim we may assume that $R$ is complete by
	\cite[Proposition 30.17]{MR632548} in which case it follows from
	\cite[Proposition 6.17 (iv)]{MR632548}.
	
	Now suppose that $\Lambda$ is semiperfect and let
	$Q \in \PMod(\Omega)$. In order to show that $b$ is surjective,
	it suffices to find $P \in \PMod(\Lambda)$ such that
	$\overline{P} \simeq Q$. 
	Let us put $\tilde Q := Q / \rad(\Omega)Q \in \PMod(\tilde{\Lambda})$.
	Then there is a $P \in \PMod(\Lambda)$ such that
	$P / \rad(\Lambda) P \simeq \tilde Q$ by \cite[Theorem 6.23]{MR632548}.
	Then both $\overline{P}$ and $Q$ are finitely generated projective 
	$\Omega$-modules and projective covers of $\tilde Q$
	by \cite[Corollary 6.22]{MR632548}. This implies
	$\overline{P} \simeq Q$ as projective covers are unique
	up to isomorphism \cite[Proposition 6.20]{MR632548}.
\end{proof}

\begin{remark}
	If $G$ is a finite group such that the group ring $R[G]$ is semiperfect,
	diagram \eqref{eqn:Cartan-diagram} specializes to the
	\emph{Cartan--Brauer triangle} (see \cite[\S 18A]{MR632548})
	\[ \xymatrix{
		& K_0(k[G])	\ar[dl]_{e b^{-1}} \ar[dr]^c & \\
		G_0(K[G]) \ar[rr]^d & & G_0(k[G]).
	}\]
\end{remark}

The following result might be seen as an abstract version of 
Swan's theorem \cite[\S 6]{MR0138688} 
(see also \cite[Theorem 32.1]{MR632548}).

\begin{corollary} \label{cor:abstract-Swan}
	Let $P,P' \in \PMod(\Lambda)$ and suppose that the Cartan map
	$c$ is injective.
	Then $P \simeq P'$ as $\Lambda$-modules if and only if
	$K \otimes_R P \simeq K \otimes_R P'$ as $A$-modules.
\end{corollary}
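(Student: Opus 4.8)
The statement to prove is Corollary~\ref{cor:abstract-Swan}: for $P, P' \in \PMod(\Lambda)$ with the Cartan map $c$ injective, $P \simeq P'$ iff $K \otimes_R P \simeq K \otimes_R P'$. The forward implication is trivial: an isomorphism $P \simeq P'$ of $\Lambda$-modules induces an isomorphism $K \otimes_R P \simeq K \otimes_R P'$ of $A$-modules after applying the exact functor $K \otimes_R -$. So the content is the converse. The plan is to run the standard Cartan--Brauer argument through the square~\eqref{eqn:Cartan-diagram}, exploiting that $b$ is injective (Proposition~\ref{prop:b-injective}) and $c$ is injective by hypothesis.

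First I would reduce to a statement in $K_0$. Suppose $K \otimes_R P \simeq K \otimes_R P'$ as $A$-modules. Then $e([P]) = [K \otimes_R P] = [K \otimes_R P'] = e([P'])$ in $G_0(A)$, hence $d(e([P])) = d(e([P']))$ in $G_0(\Omega)$. By commutativity of the square this equals $c(b([P])) = c(b([P']))$. Since $c$ is injective, $b([P]) = b([P'])$ in $K_0(\Omega)$, and since $b$ is injective by Proposition~\ref{prop:b-injective}, we get $[P] = [P']$ in $K_0(\Lambda)$. Now Lemma~\ref{lem:equality-in-K0} gives some $Q \in \PMod(\Lambda)$ with $P \oplus Q \simeq P' \oplus Q$.

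The remaining point is to cancel $Q$, i.e.\ pass from stable isomorphism to genuine isomorphism. This is where I would invoke a cancellation result over the order $\Lambda$. The key input is that $\Lambda$ is an $R$-order in a semisimple (or at least Artinian-after-reduction) algebra over a discrete valuation ring, so one has the Krull--Schmidt / Swan-type cancellation in the relevant setting; concretely, one can reduce to the complete case (as in the proof of Proposition~\ref{prop:b-injective}, via \cite[Proposition 30.17]{MR632548}), where $\Lambda$ is semiperfect, projective covers exist, and cancellation of projectives holds by \cite[Proposition 6.17(iv)]{MR632548} or the Krull--Schmidt theorem. So from $P \oplus Q \simeq P' \oplus Q$ over the complete order we conclude $P \simeq P'$, and then descend back to $\Lambda$.

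The main obstacle I anticipate is precisely this cancellation step: in general $K_0(\Lambda) \to K_0(\text{completion})$ being injective guarantees that stable isomorphism is detected locally, but one needs genuine cancellation, which requires the semiperfectness (hence the completeness reduction) rather than following formally from injectivity of $b$ and $c$. Everything else — the diagram chase and the appeal to Lemma~\ref{lem:equality-in-K0} — is routine once the square~\eqref{eqn:Cartan-diagram} is in place.
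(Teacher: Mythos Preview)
Your proposal is correct and follows essentially the same route as the paper: chase the Cartan--Brauer square using injectivity of $b$ (Proposition~\ref{prop:b-injective}) and of $c$ to obtain $[P]=[P']$ in $K_0(\Lambda)$, then apply Lemma~\ref{lem:equality-in-K0} and cancel the projective summand after passing to the completion via \cite[Proposition~30.17]{MR632548}. The only cosmetic difference is that the paper phrases the diagram chase as ``$e$ is injective'' and cites \cite[Corollary~6.15]{MR632548} (Krull--Schmidt over a semiperfect ring) for the cancellation, whereas you spell out the chase explicitly and cite \cite[Proposition~6.17(iv)]{MR632548} or Krull--Schmidt; these amount to the same thing.
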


\begin{proof}
	As the map $b$ is injective by Proposition \ref{prop:b-injective}
	and the Cartan map $c$ is injective by assumption, also the map
	$e$ in diagram \eqref{eqn:Cartan-diagram} has to be injective.
	Now assume that
	$K \otimes_R P \simeq K \otimes_R P'$ as $A$-modules.
	Then we have in particular that $e([P]) = e([P'])$ in 
	$G_0(A)$ and thus $[P] = [P']$ in $K_0(\Lambda)$.
	By Lemma \ref{lem:equality-in-K0} there is a finitely generated
	projective $\Lambda$-module $Q$ such that $P \oplus Q \simeq
	P' \oplus Q$. In order to deduce $P \simeq P'$
	we may assume that $R$ is complete by \cite[Proposition 30.17]{MR632548}.
	Now the result follows from \cite[Corollary 6.15]{MR632548}.
\end{proof}

\begin{corollary}[Swan] \label{cor:Swan}
	Let $G$ be a finite group and let $P, P' \in \PMod(R[G])$.
	Then $P \simeq P'$ as $R[G]$-modules if and only if
	$K \otimes_R P \simeq K \otimes_R P'$ as $K[G]$-modules.
\end{corollary}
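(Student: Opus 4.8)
The plan is to obtain this as a direct specialization of the abstract version, Corollary~\ref{cor:abstract-Swan}. I would apply that corollary with the given discrete valuation ring $R$, the finite-dimensional $K$-algebra $A := K[G]$, and the $R$-order $\Lambda := R[G]$ in $A$; note that $R[G]$ is indeed an $R$-order in $K[G]$ and that $\Omega = k \otimes_R R[G] \cong k[G]$, both of which are immediate. The only hypothesis of Corollary~\ref{cor:abstract-Swan} that is not automatic is the injectivity of the Cartan map $c \colon K_0(k[G]) \to G_0(k[G])$, so the whole argument reduces to verifying this.

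If $k$ has characteristic $0$, or characteristic $\ell$ with $\ell \nmid |G|$, then $k[G]$ is semisimple by Maschke's theorem, so every finitely generated $k[G]$-module is projective and $c$ is an isomorphism; there is nothing to prove. The substantive case is $\mathrm{char}(k) = p$ with $p \mid |G|$. Here I would invoke Brauer's theorem that the Cartan matrix of $k[G]$ is nonsingular --- in fact its determinant is a power of $p$ --- which forces $c$ to be injective; this is classical and can be found in \cite[\S 21]{MR632548} (it is precisely the ingredient that makes the Cartan--Brauer triangle useful).

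Granting the injectivity of $c$, Corollary~\ref{cor:abstract-Swan} yields exactly the asserted equivalence: $P \simeq P'$ as $R[G]$-modules if and only if $K \otimes_R P \simeq K \otimes_R P'$ as $K[G]$-modules. Thus the entire difficulty is concentrated in the injectivity of the Cartan map for modular group algebras --- a deep input due to Brauer --- while the remainder is a formal consequence of the Cartan--Brauer square and Proposition~\ref{prop:b-injective} already established above. I expect no further obstacle; in particular the passage to a complete base ring needed to pass from stable isomorphism to isomorphism is already handled internally in the proof of Corollary~\ref{cor:abstract-Swan}.
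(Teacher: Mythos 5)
Your proposal is correct and follows essentially the same route as the paper: reduce to the injectivity of the Cartan map for $k[G]$ via Corollary~\ref{cor:abstract-Swan}, handle the semisimple case by Maschke's theorem, and invoke Brauer's nonsingularity of the Cartan matrix in the modular case. The only cosmetic difference is the case split (the paper cites Brauer for all positive characteristics and Maschke otherwise), which does not affect the argument.
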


\begin{proof}
	It suffices to show that the Cartan map is injective.
	If $k$ has positive characteristic, this follows from
	a theorem of Brauer
	(see \cite[Theorem 21.22]{MR632548} or
	\cite[Corollary 1 of Theorem 35]{MR0450380}).
	If $k$ has characteristic zero (or if the characteristic
	is positive and does not divide the cardinality of $G$),
	then $k[G]$ is a semisimple ring by Maschke's theorem
	\cite[Theorem 3.14]{MR632548}.
	Thus every finitely generated $k[G]$-module is indeed projective
	and the Cartan map becomes the identity morphism.
\end{proof}

In view of Corollary \ref{cor:abstract-Swan} it is an interesting
question of study in which cases the Cartan map is injective.
For this the following observation will be very useful.

\begin{lemma} \label{lem:rank-K-groups}
	Let $s$ be the number of non-isomorphic simple (left) $\Omega$-modules
	of an arbitrary (left) artinian ring $\Omega$.
	Then the abelian groups $K_0(\Omega)$ and $G_0(\Omega)$
	are free $\Z$-modules of rank $s$.
\end{lemma}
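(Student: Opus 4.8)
The plan is to handle $G_0(\Omega)$ and $K_0(\Omega)$ separately, in each case producing an explicit $\Z$-basis of cardinality $s$. Fix representatives $S_1,\dots,S_s$ of the isomorphism classes of simple (left) $\Omega$-modules.

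For $G_0(\Omega)$ I would argue via composition factors. Since $\Omega$ is artinian, every finitely generated $\Omega$-module $M$ has a composition series, and by the Jordan--Hölder theorem the multiplicity $[M:S_i]$ of $S_i$ among its composition factors is independent of the series. The first step is to check that $M \mapsto ([M:S_1],\dots,[M:S_s])$ is additive on short exact sequences, by concatenating composition series of a submodule and the corresponding quotient; this yields a well-defined homomorphism $\varphi\colon G_0(\Omega)\to\Z^s$. Since $\varphi([S_i])$ is the $i$-th standard basis vector, $\varphi$ is surjective. Conversely, an induction on composition length, using for a module $M$ of positive length a short exact sequence $0\to M'\to M\to S\to 0$ with $S$ simple, shows that $[S_1],\dots,[S_s]$ generate $G_0(\Omega)$. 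Hence $\varphi$ is an isomorphism and $G_0(\Omega)\cong\Z^s$ is free of rank $s$.

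For $K_0(\Omega)$ I would use that an artinian ring is semiperfect (see \cite[\S 6]{MR632548}). Each $S_i$ then admits a projective cover $P_i$, and the assignment $P\mapsto P/\rad(\Omega)P$ sets up a bijection between isomorphism classes of indecomposable finitely generated projective $\Omega$-modules and isomorphism classes of simple $\Omega$-modules; in particular there are exactly $s$ indecomposable projectives, namely $P_1,\dots,P_s$, and every finitely generated projective $\Omega$-module is a finite direct sum of these. The endomorphism ring of each $P_i$ is local, so the Krull--Schmidt theorem applies and such a decomposition is unique up to reordering. Combined with Lemma \ref{lem:equality-in-K0}, this shows that for $P,P'\in\PMod(\Omega)$ one has $[P]=[P']$ in $K_0(\Omega)$ if and only if $P\simeq P'$. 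Consequently $K_0(\Omega)$ is the free abelian group on $[P_1],\dots,[P_s]$, hence free of rank $s$.

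The additivity verification for $\varphi$ and the generation statements are routine bookkeeping with composition series. The only point demanding a little care is the $K_0$ side: invoking semiperfectness and Krull--Schmidt correctly to pass from equality of classes in $K_0(\Omega)$ to genuine isomorphism, and confirming that the number of indecomposable finitely generated projectives is exactly $s$. Both facts are standard and, if desired, can simply be quoted from \cite[\S 6]{MR632548}; so I do not anticipate a serious obstacle.
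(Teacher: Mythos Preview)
Your proposal is correct and follows essentially the same route as the paper. The paper simply cites \cite[Propositions 16.6 and 16.7]{MR632548} for the freeness of $G_0(\Omega)$ and $K_0(\Omega)$ on the simples and the indecomposable left ideals respectively, and then proves that these two index sets are in bijection via the projective-cover correspondence $I \mapsto I/\rad(\Omega)I$; you have unfolded those citations into direct arguments using Jordan--H\"older for $G_0$ and semiperfectness plus Krull--Schmidt for $K_0$, invoking the very same bijection.
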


\begin{proof}
	Let $s'$ be the number of non-isomorphic indecomposable
	left ideals in $\Omega$.
	As $\Omega$ is an artinian ring, the groups 
	$G_0(\Omega)$ and $K_0(\Omega)$
	are free $\Z$-module of rank $s$ and $s'$ by 
	\cite[Propositions 16.6 and 16.7]{MR632548}, respectively.
	However, if $I$ is an indecomposable left ideal in $\Omega$,
	then $\tilde I := I / \rad(\Omega) I$ is a simple left module
	by \cite[Corollary 6.9]{MR632548}, and $I$ is the projective cover
	of $\tilde I$ by \cite[Corollary 6.22]{MR632548}.
	This induces a one-to-one correspondence between the
	indecomposable left ideals and the simple left modules
	(see \cite[\S 6B]{MR632548} and in particular 
	\cite[Proposition 6.17]{MR632548}). Thus we have
	$s = s'$ as desired.
\end{proof}

\subsection{Crossed products}
Let $G$ be a finite group and let $R$ be a ring. Recall from
\cite[1.5.8]{MR1811901} that a \emph{crossed product} of $R$ by $G$
is an associative ring $R \ast G$ which contains $R$ as a subring
and a set of units $U_G = \left\{u_g \mid g \in G \right\}$ of 
cardinality $|G|$ such that
\begin{enumerate}
	\item 
	$R \ast G$ is a free $R$-module with basis $U_G$;
	\item
	for all $g,h \in G$ one has $u_g R = R u_g$ and
	$u_g \cdot u_h R = u_{gh} R$.
\end{enumerate}

We need the following result which immediately follows
from Lemma \ref{lem:rank-K-groups} and a theorem of Ardakov and Wadsley
\cite[\S 1.1]{MR2585122} (where Brauer's theorem again
appears as a key step in the proof).

\begin{theorem} \label{thm:Ardakov-Wadsley}
	Let $G$ be a finite group and let $k$ be a field. Then for every
	crossed product of $k$ by $G$, the Cartan map
	\[
		c: K_0(k \ast G) \longrightarrow G_0(k \ast G)
	\]
	is injective with finite cokernel.
\end{theorem}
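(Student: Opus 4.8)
The plan is to reduce Theorem~\ref{thm:Ardakov-Wadsley} to the case of an ordinary group algebra, where the injectivity of the Cartan map follows from Brauer's theorem, and to control the cokernel via the rank computation of Lemma~\ref{lem:rank-K-groups}. First I would recall the structural result of Ardakov and Wadsley \cite[\S 1.1]{MR2585122}: for a crossed product $k \ast G$, the Cartan map is injective, and one way to see this is to pass to a finite extension $k'/k$ large enough that the twisting cocycle becomes trivial, so that $k' \ast G \simeq k'[G]$ is an ordinary group algebra; base change along $k \to k'$ is compatible with the Cartan maps, and one uses that $G_0$ and $K_0$ behave well under this (faithfully flat, or the transfer argument) to descend injectivity from $k'[G]$ to $k \ast G$. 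For the group algebra $k'[G]$, the Cartan map $c \colon K_0(k'[G]) \to G_0(k'[G])$ is injective by Brauer's theorem \cite[Theorem 21.22]{MR632548}, exactly as already used in the proof of Corollary~\ref{cor:Swan}.

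Next I would address finiteness of the cokernel. By Lemma~\ref{lem:rank-K-groups}, both $K_0(k \ast G)$ and $G_0(k \ast G)$ are free $\Z$-modules of the same finite rank $s$, namely the number of isomorphism classes of simple left $k \ast G$-modules (the ring $k \ast G$ is artinian, being finite-dimensional over $k$). An injective homomorphism between free $\Z$-modules of equal finite rank automatically has finite cokernel: its cokernel is a finitely generated abelian group of rank $s - s = 0$, hence finite. So once injectivity is in hand, the finiteness of the cokernel is immediate from Lemma~\ref{lem:rank-K-groups}, with no further input needed.

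Concretely, the write-up would be short: invoke \cite[\S 1.1]{MR2585122} (or reprove the descent to $k'[G]$ and cite Brauer) for injectivity of $c$, then observe that $K_0(k \ast G)$ and $G_0(k \ast G)$ are both free of rank $s$ over $\Z$ by Lemma~\ref{lem:rank-K-groups} since $k \ast G$ is a finite-dimensional (hence artinian) $k$-algebra, and conclude that an injective map of free $\Z$-modules of equal rank has finite cokernel. I would also remark that $k \ast G$ is indeed artinian because it is finitely generated as a module over the field $k$ by axiom~(1) of the definition of a crossed product.

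The main obstacle is the injectivity statement, which is genuinely the content of the Ardakov--Wadsley theorem and not something one proves in a line: the subtlety is that a crossed product need not be a twisted group algebra in an obvious way, and reducing to $k'[G]$ after a field extension requires knowing that the relevant cocycle in $H^2(G, (k')^\times)$ can be killed and that the Cartan map is compatible with the base change $k \to k'$. Since the excerpt permits us to cite \cite{MR2585122}, the cleanest route is to quote their result directly for injectivity and supply only the (elementary) rank argument for the cokernel; the one thing to be careful about is making sure the hypotheses of \cite[\S 1.1]{MR2585122} match our setting, i.e.\ that $k$ is an arbitrary field and $G$ an arbitrary finite group, which is exactly what they assume.
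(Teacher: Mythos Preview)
Your proposal is correct and matches the paper's approach exactly: the paper states that the theorem ``immediately follows from Lemma~\ref{lem:rank-K-groups} and a theorem of Ardakov and Wadsley \cite[\S 1.1]{MR2585122},'' which is precisely your argument---cite Ardakov--Wadsley for injectivity, then use the equal-rank observation from Lemma~\ref{lem:rank-K-groups} to deduce finite cokernel.

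One minor caveat about your parenthetical sketch of how Ardakov--Wadsley's proof goes: a crossed product $k \ast G$ involves not only a $2$-cocycle but also an action of $G$ on $k$ (conjugation by $u_g$ gives an automorphism of $k$), and this action cannot in general be trivialized by a field extension, so the reduction to an ordinary group algebra $k'[G]$ is not as straightforward as you suggest. Since you ultimately cite the result directly rather than reprove it, this does not affect the correctness of your write-up, but you should not include that sketch without checking what Ardakov--Wadsley actually do.
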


\subsection{Iwasawa algebras} \label{subsec:iwasawa-alg}
Let $p$ be a prime and
$\mathcal{G}$ be a profinite group.
The complete group algebra of $\mathcal{G}$ over $\Z_p$ is 
\[
\Lambda(\mathcal{G}) := \Z_{p}\llbracket \mathcal{G} \rrbracket = \varprojlim \Z_{p}[\mathcal{G}/\mathcal{N}],
\]
where the inverse limit is taken over all open normal subgroups $\mathcal{N}$ of $\mathcal{G}$. Then $\Lambda(\mathcal{G})$ 
is a compact $\Z_p$-algebra and we denote
the kernel of the natural augmentation map
$\Lambda(\mathcal{G}) \twoheadrightarrow \Z_p$ by $\Delta(\mathcal{G})$.
If $M$ is a (left) $\Lambda(\mathcal{G})$-module we let
$M_{\mathcal{G}} := M / \Delta(\mathcal{G}) M$ be the module of coinvariants
of $M$. This is the maximal quotient module of $M$ with trivial
$\mathcal{G}$-action. 
Similarly, we denote the maximal submodule of $M$ upon which
$\mathcal{G}$ acts trivially by $M^{\mathcal{G}}$.

Now suppose that $\mathcal{G}$
contains a finite normal subgroup $H$ 
such that $\mathcal{G}/H \simeq \Z_p$.
Then $\mathcal{G}$ may be written as a semi-direct product 
$\mathcal{G} = H \rtimes \Gamma$
where $\Gamma \simeq \Z_{p}$. 
In other words, $\mathcal{G}$ is  a one-dimensional $p$-adic Lie group.

If $F$ is a finite field extension of $\Q_{p}$  with ring of integers 
$\mathcal{O}=\mathcal{O}_{F}$, we put 
$\Lambda^{\mathcal{O}}(\mathcal{G}) := \mathcal{O} \otimes_{\Z_{p}} \Lambda(\mathcal{G}) = \mathcal{O}\llbracket \mathcal{G} \rrbracket$.
We fix a topological generator $\gamma$ of $\Gamma$.
Since any homomorphism $\Gamma \rightarrow \Aut(H)$ must have open kernel, 
we may choose a natural number $n$ such that $\gamma^{p^n}$ is central in $\mathcal{G}$; 
we fix such an $n$.
As $\Gamma_{0} := \Gamma^{p^n} \simeq \Z_{p}$, there is a ring isomorphism
$\Lambda^{\mathcal{O}}(\Gamma_{0}) \simeq \mathcal{O}\llbracket T \rrbracket$ 
induced by $\gamma^{p^n} \mapsto 1+T$
where $\mathcal{O}\llbracket T \rrbracket$ denotes the power series ring in one variable over $\mathcal{O}$.
If we view $\Lambda^{\mathcal{O}}(\mathcal{G})$ as a
$\Lambda^{\mathcal{O}}(\Gamma_{0})$-module 
(or indeed as a left $\Lambda^{\mathcal{O}}(\Gamma_{0})[H]$-module), 
there is a decomposition
\begin{equation} \label{eq:Lambda-decomp}
\Lambda^{\mathcal{O}}(\mathcal{G}) = 
\bigoplus_{i=0}^{p^n-1} \Lambda^{\mathcal{O}}(\Gamma_{0})[H] \gamma^{i}.
\end{equation}
Hence $\Lambda^{\mathcal{O}}(\mathcal{G})$ is finitely generated as an $\Lambda^{\mathcal{O}}(\Gamma_{0})$-module
and is an $\Lambda^{\mathcal{O}}(\Gamma_{0})$-order 
in the separable $\mathcal{Q}^{F} (\Gamma_0) 
:= Quot(\Lambda^{\mathcal{O}}(\Gamma_{0}))$-algebra
$\mathcal{Q}^{F} (\mathcal{G})$, the total ring of fractions of 
$\Lambda^{\mathcal{O}}(\mathcal{G})$, obtained
from $\Lambda^{\mathcal{O}}(\mathcal{G})$ by adjoining inverses of all central regular elements.
It follows from \eqref{eq:Lambda-decomp} that
$\Lambda^{\mathcal{O}}(\mathcal{G})$
is a crossed product 
of $\Lambda^{\mathcal{O}}(\Gamma_{0})$
by $\mathcal{G} / \Gamma_0$ 
(see also \cite[\S 2.3]{MR2290583}):
\[
	\Lambda^{\mathcal{O}}(\mathcal{G}) \simeq
	\Lambda^{\mathcal{O}}(\Gamma_{0}) \ast (\mathcal{G} / \Gamma_0).
\]

The commutative ring $\Lambda^{\mathcal{O}}(\Gamma_0)$ 
is a regular local ring of dimension $2$. 
If $\mathfrak p$ is a prime ideal in $\Lambda^{\mathcal{O}}(\Gamma_0)$
of height $1$, we denote the localization of
$\Lambda^{\mathcal{O}}(\Gamma_0)$ at
$\mathfrak p$ by $\Lambda_{\mathfrak p}^{\mathcal{O}}(\Gamma_0)$. This is a
discrete valuation ring and we denote its residue field by
$\Omega_{\mathfrak p}^{\mathcal{O}}(\Gamma_0)$. We also put
\begin{eqnarray*}
	\Lambda_{\mathfrak p}^{\mathcal{O}}(\mathcal G)  := & 
	\Lambda_{\mathfrak p}^{\mathcal{O}}(\Gamma_0)
	\otimes_{\Lambda^{\mathcal{O}}(\Gamma_0)}
	\Lambda^{\mathcal{O}}(\mathcal{G}) & \simeq 
	\Lambda_{\mathfrak p}^{\mathcal{O}}(\Gamma_0) \ast 
		(\mathcal{G} / \Gamma_0)\\
	\Omega_{\mathfrak p}^{\mathcal{O}}(\mathcal G)  := & 
	\Omega_{\mathfrak p}^{\mathcal{O}}(\Gamma_0)
	\otimes_{\Lambda^{\mathcal{O}}(\Gamma_0)}
	\Lambda^{\mathcal{O}}(\mathcal{G}) & \simeq 
	\Omega_{\mathfrak p}^{\mathcal{O}}(\Gamma_0) \ast 
		(\mathcal{G} / \Gamma_0).
\end{eqnarray*}
We therefore have the following special case of Theorem \ref{thm:Ardakov-Wadsley}.

\begin{prop} \label{prop:cartan-Iwasawa}
	Let $\mathfrak p$ be a prime ideal in 
	$\Lambda^{\mathcal{O}}(\Gamma_0)$ of height $1$.
	Then the Cartan map
	\[
		c: K_0(\Omega_{\mathfrak p}^{\mathcal{O}}(\mathcal G))
		\longrightarrow G_0(\Omega_{\mathfrak p}^{\mathcal{O}}(\mathcal G))
	\]
	is injective with finite cokernel.
\end{prop}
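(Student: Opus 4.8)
The plan is to identify $\Omega_{\mathfrak p}^{\mathcal{O}}(\mathcal G)$ as a crossed product of a field by a finite group and then invoke Theorem \ref{thm:Ardakov-Wadsley} directly. Set $k := \Omega_{\mathfrak p}^{\mathcal{O}}(\Gamma_0)$, the residue field of the discrete valuation ring $\Lambda_{\mathfrak p}^{\mathcal{O}}(\Gamma_0)$, and $G := \mathcal G/\Gamma_0$. First I would record that $G$ is finite: since $H$ is finite with $H \cap \Gamma_0 = 1$ (as $H \cap \Gamma = 1$ in the semi-direct product $\mathcal G = H \rtimes \Gamma$) and $\Gamma_0 = \Gamma^{p^n}$ has index $p^n$ in $\Gamma$, there is a short exact sequence $1 \to H \to \mathcal G/\Gamma_0 \to \Gamma/\Gamma_0 \to 1$ with $\Gamma/\Gamma_0$ cyclic of order $p^n$, so $|G| = |H| p^n$.

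Next I would observe that the crossed product structure descends along base change. By \eqref{eq:Lambda-decomp} the ring $\Lambda^{\mathcal{O}}(\mathcal{G})$ is free as a $\Lambda^{\mathcal{O}}(\Gamma_0)$-module on a set of units indexed by $G$, with multiplication governed by relations of the form $u_g u_h = u_{gh}\,\lambda(g,h)$ and $u_g r = r^{\sigma_g} u_g$, where $\lambda$ takes values in $\Lambda^{\mathcal{O}}(\Gamma_0)^{\times}$ and the $\sigma_g$ are ring automorphisms of $\Lambda^{\mathcal{O}}(\Gamma_0)$; this is exactly the isomorphism $\Lambda^{\mathcal{O}}(\mathcal{G}) \simeq \Lambda^{\mathcal{O}}(\Gamma_0) \ast G$ recorded in \S\ref{subsec:iwasawa-alg}. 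Applying $\Omega_{\mathfrak p}^{\mathcal{O}}(\Gamma_0) \otimes_{\Lambda^{\mathcal{O}}(\Gamma_0)} -$ preserves freeness on the same basis and transports these defining relations, so $\Omega_{\mathfrak p}^{\mathcal{O}}(\mathcal G) \simeq k \ast G$ is a crossed product of the field $k$ by the finite group $G$; this is precisely the second displayed isomorphism preceding the statement.

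Finally, Theorem \ref{thm:Ardakov-Wadsley}, applied to the finite group $G = \mathcal G/\Gamma_0$ and the field $k = \Omega_{\mathfrak p}^{\mathcal{O}}(\Gamma_0)$, shows that the Cartan map $c \colon K_0(k \ast G) \to G_0(k \ast G)$ is injective with finite cokernel, which is the assertion. I do not expect any serious obstacle: the content is entirely contained in Theorem \ref{thm:Ardakov-Wadsley}, and the only points to verify are the elementary facts that $\mathcal G/\Gamma_0$ is finite and that the crossed product presentation survives the base change $\Lambda^{\mathcal{O}}(\Gamma_0) \to \Omega_{\mathfrak p}^{\mathcal{O}}(\Gamma_0)$ — both immediate from \eqref{eq:Lambda-decomp}, or alternatively one may simply quote the isomorphisms already displayed in \S\ref{subsec:iwasawa-alg}.
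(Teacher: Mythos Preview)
Your proposal is correct and follows exactly the paper's approach: the paper introduces the proposition with ``We therefore have the following special case of Theorem~\ref{thm:Ardakov-Wadsley}'' and gives no further argument, relying on the crossed-product isomorphism $\Omega_{\mathfrak p}^{\mathcal{O}}(\mathcal G) \simeq \Omega_{\mathfrak p}^{\mathcal{O}}(\Gamma_0) \ast (\mathcal G/\Gamma_0)$ already displayed in \S\ref{subsec:iwasawa-alg}. Your additional remarks on the finiteness of $\mathcal G/\Gamma_0$ and the base-change stability of the crossed-product presentation simply spell out what the paper records in the setup preceding the statement.
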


The analogue of Swan's theorem for Iwasawa algebras is now easily
established:

\begin{corollary} \label{cor:Swan-Iwasawa}
	Let $\mathfrak p$ be a prime ideal in 
	$\Lambda^{\mathcal{O}}(\Gamma_0)$ of height $1$
	and let $P,P' \in \PMod(\Lambda_{\mathfrak p}^{\mathcal{O}}(\mathcal G))$.
	Then $P \simeq P'$ as 
	$\Lambda_{\mathfrak p}^{\mathcal{O}}(\mathcal G)$-modules if and only if
	$\mathcal{Q}^{F} (\mathcal{G}) \otimes_{\Lambda_{\mathfrak p}^{\mathcal{O}}(\mathcal G)} P \simeq
	\mathcal{Q}^{F} (\mathcal{G}) \otimes_{\Lambda_{\mathfrak p}^{\mathcal{O}}(\mathcal G)} P'$ as
	$\mathcal{Q}^{F} (\mathcal{G})$-modules.
\end{corollary}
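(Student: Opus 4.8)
The plan is to recognize the statement as a direct instance of the abstract version of Swan's theorem proved above, namely Corollary \ref{cor:abstract-Swan}, applied with the discrete valuation ring $R := \Lambda_{\mathfrak p}^{\mathcal{O}}(\Gamma_0)$ playing the role of the DVR of \S\ref{sec:Swan}. Its field of fractions is $K := \mathcal{Q}^{F}(\Gamma_0)$ and its residue field is $k := \Omega_{\mathfrak p}^{\mathcal{O}}(\Gamma_0)$. With these identifications the relevant $K$-algebra is $A := \mathcal{Q}^{F}(\mathcal{G})$, the order is $\Lambda := \Lambda_{\mathfrak p}^{\mathcal{O}}(\mathcal{G})$, and the residual algebra is $\Omega := \Omega_{\mathfrak p}^{\mathcal{O}}(\mathcal{G})$.

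First I would verify that the abstract framework of \S\ref{sec:Swan} genuinely applies, which amounts to checking that $\Lambda_{\mathfrak p}^{\mathcal{O}}(\mathcal{G})$ is an $R$-order in the finite dimensional $K$-algebra $\mathcal{Q}^{F}(\mathcal{G})$ and that the ring denoted $\Omega$ there coincides with $\Omega_{\mathfrak p}^{\mathcal{O}}(\mathcal{G})$. Both are immediate from the decomposition \eqref{eq:Lambda-decomp}: it exhibits $\Lambda^{\mathcal{O}}(\mathcal{G})$ as a free $\Lambda^{\mathcal{O}}(\Gamma_0)$-module of rank $p^n|H|$, so after the base change $\Lambda^{\mathcal{O}}(\Gamma_0) \to R$ the ring $\Lambda_{\mathfrak p}^{\mathcal{O}}(\mathcal{G})$ is free of the same finite rank over the DVR $R$; in particular it is module-finite over a noetherian ring, hence noetherian, and $K \otimes_R \Lambda_{\mathfrak p}^{\mathcal{O}}(\mathcal{G}) = \mathcal{Q}^{F}(\mathcal{G})$ is finite dimensional over $K$. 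Likewise $k \otimes_R \Lambda_{\mathfrak p}^{\mathcal{O}}(\mathcal{G}) = \Omega_{\mathfrak p}^{\mathcal{O}}(\Gamma_0) \otimes_{\Lambda^{\mathcal{O}}(\Gamma_0)} \Lambda^{\mathcal{O}}(\mathcal{G}) = \Omega_{\mathfrak p}^{\mathcal{O}}(\mathcal{G})$, so the "$\Omega$" of \S\ref{sec:Swan} is exactly the ring appearing in Proposition \ref{prop:cartan-Iwasawa}.

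Next, the hypothesis of Corollary \ref{cor:abstract-Swan} on the Cartan map is supplied verbatim by Proposition \ref{prop:cartan-Iwasawa}: the Cartan map $c\colon K_0(\Omega_{\mathfrak p}^{\mathcal{O}}(\mathcal{G})) \to G_0(\Omega_{\mathfrak p}^{\mathcal{O}}(\mathcal{G}))$ is injective (indeed with finite cokernel, though only injectivity is needed here). Applying Corollary \ref{cor:abstract-Swan} then gives that $P \simeq P'$ as $\Lambda_{\mathfrak p}^{\mathcal{O}}(\mathcal{G})$-modules if and only if $K \otimes_R P \simeq K \otimes_R P'$ as $\mathcal{Q}^{F}(\mathcal{G})$-modules. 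To match the phrasing of the statement I would finally observe the canonical identification $\mathcal{Q}^{F}(\mathcal{G}) \otimes_{\Lambda_{\mathfrak p}^{\mathcal{O}}(\mathcal{G})} P \cong \bigl(K \otimes_R \Lambda_{\mathfrak p}^{\mathcal{O}}(\mathcal{G})\bigr) \otimes_{\Lambda_{\mathfrak p}^{\mathcal{O}}(\mathcal{G})} P \cong K \otimes_R P$, and similarly for $P'$, which rewrites the conclusion in the desired form. There is no real obstacle in this argument: the genuine content has already been extracted, on the one hand into the abstract Swan theorem of Corollary \ref{cor:abstract-Swan}, and on the other into the Ardakov--Wadsley injectivity of the Cartan map underlying Proposition \ref{prop:cartan-Iwasawa}; the only point requiring a little care is the bookkeeping that reconciles the abstract data $(R,K,k,A,\Lambda,\Omega)$ with the Iwasawa-theoretic objects, which is handled entirely by the freeness in \eqref{eq:Lambda-decomp}.
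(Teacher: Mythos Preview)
Your proposal is correct and follows exactly the same approach as the paper, which simply states that the result is immediate from Corollary~\ref{cor:abstract-Swan} and Proposition~\ref{prop:cartan-Iwasawa}. Your additional bookkeeping (identifying $R$, $K$, $k$, $A$, $\Lambda$, $\Omega$ via \eqref{eq:Lambda-decomp} and matching the tensor products) just makes explicit what the paper leaves implicit.
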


\begin{proof}
	This immediately follows from Corollary \ref{cor:abstract-Swan} and
	Proposition \ref{prop:cartan-Iwasawa}.
\end{proof}

\begin{remark} \label{rem:Hattori}
	If $\mathcal{G} = H \times \Gamma$ is a direct product, then
	Corollary \ref{cor:Swan-Iwasawa} is a direct consequence of
	Swan's original theorem (Corollary \ref{cor:Swan}).
	We now explain why even Hattori's more general approach
	\cite{MR0175950} (see \cite[Theorem 32.5]{MR632548}) 
	to Swan's theorem does not imply Corollary
	\ref{cor:Swan-Iwasawa} if $\mathcal{G} = H \rtimes \Gamma$
	is only a semi-direct product.
	
	Assume for simplicity that $\mathcal{G}$ is a pro-$p$-group
	and that $\mathcal{O} = \Z_p$. 
	If $\mathcal{G} = H \rtimes \Gamma$ is not a direct product,
	then any choice of $\Gamma_0$ will be a proper subgroup of $\Gamma$.
	Let $\Delta(H)$ be the 
	(left) ideal of $\Omega_{(p)}(\mathcal{G})$ generated by
	the elements $h-1$, $h \in H$. Then $\Delta(H)$ is nilpotent
	and thus contained in the radical $\mathfrak r :=
	\rad(\Omega_{(p)}(\mathcal{G}))$ by \cite[Proposition 5.15]{MR632548}.
	However, we have that
	\[
	\Omega_{(p)}(\mathcal{G}) / \Delta(H) \simeq
	\Omega_{(p)}(\Gamma_0) \ast \Gamma/ \Gamma_0 \simeq
	\Omega_{(p)}(\Gamma)
	\]
	is an inseparable field extension of $\Omega_{(p)}(\Gamma_0)$. 
	Hence $\mathfrak r = \Delta(H)$ and $\Omega_{(p)}(\mathcal{G}) /
	\mathfrak r$ is not a separable $\Omega_{(p)}(\Gamma_0)$-algebra
	as it would be required for Hattori's theorem.
\end{remark}

\begin{corollary} \label{cor:connecting-hom}
	Let $\mathfrak p$ be a prime ideal in 
	$\Lambda^{\mathcal{O}}(\Gamma_0)$ of height $1$. Then the connecting
	homomorphism
	\[
		\partial_{\Lambda_{\mathfrak p}^{\mathcal{O}}(\mathcal G),
			\mathcal{Q}^F(\mathcal{G})}:
		K_1(\mathcal{Q}^F(\mathcal{G})) \longrightarrow 
		K_0(\Lambda_{\mathfrak p}^{\mathcal{O}}(\mathcal G),
		\mathcal{Q}^F(\mathcal{G}))
	\]
	is surjective.
\end{corollary}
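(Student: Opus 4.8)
The plan is to read the statement off the long exact sequence of relative $K$-theory \eqref{eqn:long-exact-seq}. Write $\Lambda := \Lambda_{\mathfrak p}^{\mathcal{O}}(\mathcal G)$, $A := \mathcal{Q}^F(\mathcal{G})$ and $\partial := \partial_{\Lambda_{\mathfrak p}^{\mathcal{O}}(\mathcal G),\mathcal{Q}^F(\mathcal{G})}$. The relevant portion of \eqref{eqn:long-exact-seq} reads
\[
K_1(A) \xrightarrow{\partial} K_0(\Lambda,A) \xrightarrow{\iota} K_0(\Lambda) \xrightarrow{\varepsilon} K_0(A),
\]
where $\varepsilon$ is the base change homomorphism $[P] \mapsto [A \otimes_{\Lambda} P]$. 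By exactness at $K_0(\Lambda)$ one has $\im(\iota) = \ker(\varepsilon)$, and by exactness at $K_0(\Lambda,A)$ one has $\im(\partial) = \ker(\iota)$. Hence, as soon as $\varepsilon$ is injective, $\iota$ is the zero map, so $\ker(\iota) = K_0(\Lambda,A)$ and $\partial$ is surjective. It therefore suffices to show that $\varepsilon$ is injective.

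To this end I would apply the Cartan--Brauer square \eqref{eqn:Cartan-diagram} to the discrete valuation ring $R := \Lambda_{\mathfrak p}^{\mathcal{O}}(\Gamma_0)$, whose field of fractions is $\mathcal{Q}^F(\Gamma_0)$ and whose residue field is $\Omega_{\mathfrak p}^{\mathcal{O}}(\Gamma_0)$. By \eqref{eq:Lambda-decomp} and the discussion in \S\ref{subsec:iwasawa-alg}, $\Lambda$ is an $R$-order in the separable $\mathcal{Q}^F(\Gamma_0)$-algebra $A$, and reduction modulo the maximal ideal of $R$ gives $\Omega_{\mathfrak p}^{\mathcal{O}}(\Gamma_0) \otimes_{R} \Lambda = \Omega_{\mathfrak p}^{\mathcal{O}}(\mathcal G)$. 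Since $A$ is separable over a field it is semisimple artinian, so every finitely generated $A$-module is projective and the canonical map $K_0(A) \to G_0(A)$ is an isomorphism; under it, the left vertical arrow $e$ of \eqref{eqn:Cartan-diagram} is identified with $\varepsilon$. Now $b$ is injective by Proposition \ref{prop:b-injective} and the Cartan map $c$ is injective by Proposition \ref{prop:cartan-Iwasawa}, so commutativity of \eqref{eqn:Cartan-diagram} forces $e$ (hence $\varepsilon$) to be injective, exactly as in the proof of Corollary \ref{cor:abstract-Swan}. This proves the corollary.

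I do not anticipate a genuine obstacle here: the mathematical content lies entirely in Proposition \ref{prop:cartan-Iwasawa} (where the Ardakov--Wadsley theorem is used), and what remains is formal. The only points deserving a word of care are that $A = \mathcal{Q}^F(\mathcal{G})$ is semisimple --- so that $K_0(A) \cong G_0(A)$ and $\varepsilon$ really is the vertical map of the square --- and that the hypotheses of \S\ref{sec:Swan} hold for $R = \Lambda_{\mathfrak p}^{\mathcal{O}}(\Gamma_0)$, both of which are already recorded in \S\ref{subsec:iwasawa-alg}. Alternatively, one can avoid the square: if $[A\otimes_{\Lambda}P] = [A\otimes_{\Lambda}P']$ in $K_0(A)$, then $A\otimes_{\Lambda}(P\oplus\Lambda^m) \simeq A\otimes_{\Lambda}(P'\oplus\Lambda^m)$ for some $m$ by Lemma \ref{lem:equality-in-K0}, whence $P\oplus\Lambda^m \simeq P'\oplus\Lambda^m$ by Corollary \ref{cor:Swan-Iwasawa}, so $[P]=[P']$ in $K_0(\Lambda)$; this yields the injectivity of $\varepsilon$ directly.
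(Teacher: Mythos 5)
Your proof is correct and follows essentially the same route as the paper: the paper's own proof simply reads the surjectivity of $\partial$ off the long exact sequence \eqref{eqn:long-exact-seq} together with Corollary \ref{cor:Swan-Iwasawa}, which is exactly the content of your argument (your "alternative" at the end is verbatim the paper's intended deduction, and your main route through the Cartan--Brauer square establishes the same injectivity of $\varepsilon$ from the same underlying input, Proposition \ref{prop:cartan-Iwasawa}).
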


\begin{proof}
	This follows from the long exact sequence 
	\eqref{eqn:long-exact-seq} and Corollary \ref{cor:Swan-Iwasawa}.
\end{proof}

\begin{corollary}
	Let $\mathfrak p$ be a prime ideal in 
	$\Lambda^{\mathcal{O}}(\Gamma_0)$ of height $1$. Then the connecting
	homomorphism
	\[
	\partial_{\Lambda^{\mathcal{O}}(\mathcal G), \Lambda_{\mathfrak p}^{\mathcal{O}}(\mathcal G)}:
	K_1(\Lambda_{\mathfrak p}^{\mathcal{O}}(\mathcal G)) \longrightarrow 
	K_0(\Lambda^{\mathcal{O}}(\mathcal G), \Lambda_{\mathfrak p}^{\mathcal{O}}(\mathcal G))
	\]
	is surjective. Moreover, we have a short exact sequence of abelian groups
	\[
	0 \longrightarrow K_0(\Lambda^{\mathcal{O}}(\mathcal G), \Lambda_{\mathfrak p}^{\mathcal{O}}(\mathcal G)) \longrightarrow  
	K_0(\Lambda^{\mathcal{O}}(\mathcal G),
	\mathcal{Q}^F(\mathcal{G})) \longrightarrow  
	K_0(\Lambda_{\mathfrak p}^{\mathcal{O}}(\mathcal G),
	\mathcal{Q}^F(\mathcal{G})) \longrightarrow 0.
	\]
\end{corollary}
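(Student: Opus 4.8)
The plan is to obtain both assertions from the long exact sequence \eqref{eqn:long-exact-seq} applied to the ring homomorphisms $\Lambda^{\mathcal O}(\mathcal G)\to\Lambda_{\mathfrak p}^{\mathcal O}(\mathcal G)\to\mathcal Q^F(\mathcal G)$, using the first assertion and Corollary \ref{cor:connecting-hom} as inputs for the second. By \eqref{eqn:long-exact-seq} the surjectivity of $\partial_{\Lambda^{\mathcal O}(\mathcal G),\Lambda_{\mathfrak p}^{\mathcal O}(\mathcal G)}$ is equivalent to the injectivity of $K_0(\Lambda^{\mathcal O}(\mathcal G))\to K_0(\Lambda_{\mathfrak p}^{\mathcal O}(\mathcal G))$. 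Since this factors through $K_0(\mathcal Q^F(\mathcal G))$ and the map $K_0(\Lambda_{\mathfrak p}^{\mathcal O}(\mathcal G))\to K_0(\mathcal Q^F(\mathcal G))$ is injective by Corollary \ref{cor:connecting-hom} and \eqref{eqn:long-exact-seq}, it suffices to prove that $K_0(\Lambda^{\mathcal O}(\mathcal G))\to K_0(\mathcal Q^F(\mathcal G))$ is injective. First I would reduce modulo a uniformiser $\pi$ of $\mathcal O$: by \eqref{eq:Lambda-decomp} the ring $\Lambda^{\mathcal O}(\mathcal G)$ is finitely generated over the complete local ring $\Lambda^{\mathcal O}(\Gamma_0)$, hence $\pi$-adically complete with $\pi$ in its Jacobson radical, so reduction modulo $\pi$ gives an isomorphism $K_0(\Lambda^{\mathcal O}(\mathcal G))\xrightarrow{\ \sim\ }K_0(\Lambda_0)$ with $\Lambda_0:=\Lambda^{\mathcal O}(\mathcal G)/\pi\Lambda^{\mathcal O}(\mathcal G)$.

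Now $\Lambda_0$ is an order of exactly the type considered in \S\ref{sec:Swan}: again by \eqref{eq:Lambda-decomp} it is finitely generated over the image $\overline{\Lambda^{\mathcal O}(\Gamma_0)}\cong k\llbracket T\rrbracket$ of $\Lambda^{\mathcal O}(\Gamma_0)$, with $k=\mathcal O/\pi$, which is a complete discrete valuation ring with uniformiser $T$ and residue field $k$; moreover the ambient algebra is $A:=k(\!(T)\!)\otimes_{k\llbracket T\rrbracket}\Lambda_0=\Omega_{(\pi)}^{\mathcal O}(\mathcal G)$ (where $(\pi)=\pi\Lambda^{\mathcal O}(\Gamma_0)$ is a height $1$ prime) and the residue algebra is $\Omega:=k\otimes_{k\llbracket T\rrbracket}\Lambda_0=k\ast(\mathcal G/\Gamma_0)$. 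The Cartan map of $\Omega$ is injective by Theorem \ref{thm:Ardakov-Wadsley}, so by Proposition \ref{prop:b-injective} and the Cartan--Brauer square \eqref{eqn:Cartan-diagram} — exactly as in the proof of Corollary \ref{cor:abstract-Swan} — the map $e_0\colon K_0(\Lambda_0)\to G_0(\Omega_{(\pi)}^{\mathcal O}(\mathcal G))$, $[P_0]\mapsto[\Omega_{(\pi)}^{\mathcal O}(\mathcal G)\otimes_{\Lambda_0}P_0]$, is injective. The composite $K_0(\Lambda^{\mathcal O}(\mathcal G))\xrightarrow{\ \sim\ }K_0(\Lambda_0)\xrightarrow{\ e_0\ }G_0(\Omega_{(\pi)}^{\mathcal O}(\mathcal G))$ factors as $K_0(\Lambda^{\mathcal O}(\mathcal G))\to K_0(\Lambda_{(\pi)}^{\mathcal O}(\mathcal G))\to K_0(\Omega_{(\pi)}^{\mathcal O}(\mathcal G))\to G_0(\Omega_{(\pi)}^{\mathcal O}(\mathcal G))$, so $K_0(\Lambda^{\mathcal O}(\mathcal G))\to K_0(\Lambda_{(\pi)}^{\mathcal O}(\mathcal G))$ is injective; composing with the injection $K_0(\Lambda_{(\pi)}^{\mathcal O}(\mathcal G))\hookrightarrow K_0(\mathcal Q^F(\mathcal G))$ supplied by Corollary \ref{cor:connecting-hom} gives the injectivity of $K_0(\Lambda^{\mathcal O}(\mathcal G))\to K_0(\mathcal Q^F(\mathcal G))$, and hence of $K_0(\Lambda^{\mathcal O}(\mathcal G))\to K_0(\Lambda_{\mathfrak p}^{\mathcal O}(\mathcal G))$ for every height $1$ prime $\mathfrak p$. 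I expect the main subtlety of part (i) to be precisely that one cannot run this argument for a general $\mathfrak p$: the ring $\Lambda^{\mathcal O}(\Gamma_0)/\mathfrak p$ need not be a discrete valuation ring, so the machinery of \S\ref{sec:Swan} does not apply to $\Lambda^{\mathcal O}(\mathcal G)/\mathfrak p\Lambda^{\mathcal O}(\mathcal G)$; this forces the detour through the prime $(\pi)$ above $p$, where $k\llbracket T\rrbracket$ is a complete discrete valuation ring, followed by the transfer back through $\mathcal Q^F(\mathcal G)$.

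For the short exact sequence I would feed $\Lambda^{\mathcal O}(\mathcal G)\to\Lambda_{\mathfrak p}^{\mathcal O}(\mathcal G)\to\mathcal Q^F(\mathcal G)$ into the long exact sequence of relative $K$-groups of a triple of ring homomorphisms, which contains
\[
K_1(\Lambda_{\mathfrak p}^{\mathcal O}(\mathcal G),\mathcal Q^F(\mathcal G))\to K_0(\Lambda^{\mathcal O}(\mathcal G),\Lambda_{\mathfrak p}^{\mathcal O}(\mathcal G))\xrightarrow{\ \mu\ }K_0(\Lambda^{\mathcal O}(\mathcal G),\mathcal Q^F(\mathcal G))\xrightarrow{\ \nu\ }K_0(\Lambda_{\mathfrak p}^{\mathcal O}(\mathcal G),\mathcal Q^F(\mathcal G))\to 0.
\]
Here exactness in the middle is automatic, and the surjectivity of $\nu$ follows from Corollary \ref{cor:connecting-hom} together with the compatibility $\nu\circ\partial_{\Lambda^{\mathcal O}(\mathcal G),\mathcal Q^F(\mathcal G)}=\partial_{\Lambda_{\mathfrak p}^{\mathcal O}(\mathcal G),\mathcal Q^F(\mathcal G)}$. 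It remains to prove that $\mu$ is injective. Using part (i) and the injectivity of $K_0(\Lambda^{\mathcal O}(\mathcal G))\to K_0(\mathcal Q^F(\mathcal G))$ established above, each of the three relative groups in the sequence is identified (via \eqref{eqn:long-exact-seq} and Corollary \ref{cor:connecting-hom}) with a quotient of $K_1(\mathcal Q^F(\mathcal G))$, and a short diagram chase then shows that the injectivity of $\mu$ is equivalent to the statement that $\ker\!\big(K_1(\Lambda_{\mathfrak p}^{\mathcal O}(\mathcal G))\to K_1(\mathcal Q^F(\mathcal G))\big)$ — that is, $SK_1$ of the localised Iwasawa algebra — lies in the image of $K_1(\Lambda^{\mathcal O}(\mathcal G))\to K_1(\Lambda_{\mathfrak p}^{\mathcal O}(\mathcal G))$. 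This is the hard part; I would hope to deduce it from the localisation sequence for the central localisation $\Lambda^{\mathcal O}(\mathcal G)\to\Lambda_{\mathfrak p}^{\mathcal O}(\mathcal G)$ together with known vanishing and comparison results for $SK_1$ of Iwasawa algebras (in favourable cases, e.g. when $\mathfrak p$ has residue characteristic $0$, the group $SK_1(\Lambda_{\mathfrak p}^{\mathcal O}(\mathcal G))$ itself vanishes, so that $\mu$ is injective outright).
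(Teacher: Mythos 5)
Your two assertions have different statuses, so let me take them in turn. For the surjectivity of $\partial_{\Lambda^{\mathcal O}(\mathcal G),\Lambda_{\mathfrak p}^{\mathcal O}(\mathcal G)}$ your argument is correct and takes a genuinely different route from the paper. The paper simply quotes the surjectivity of $\partial_{\Lambda^{\mathcal O}(\mathcal G),\mathcal Q^F(\mathcal G)}$ from \cite[Corollary 3.8]{MR3034286} — which by \eqref{eqn:long-exact-seq} is the injectivity of $K_0(\Lambda^{\mathcal O}(\mathcal G))\to K_0(\mathcal Q^F(\mathcal G))$ — and then factors this map through $K_0(\Lambda_{\mathfrak p}^{\mathcal O}(\mathcal G))$, exactly as you do at the end. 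You instead re-derive that injectivity from the paper's own machinery: reduction modulo $\pi$ (legitimate, since $\Lambda^{\mathcal O}(\mathcal G)$ is $\pi$-adically complete with $\pi$ in its radical, so $K_0$ is unchanged), the observation that $\Lambda^{\mathcal O}(\mathcal G)/\pi$ is a $k\llbracket T\rrbracket$-order in $\Omega_{(\pi)}^{\mathcal O}(\mathcal G)$ with residue algebra the crossed product $k\ast(\mathcal G/\Gamma_0)$, and then the Cartan--Brauer square \eqref{eqn:Cartan-diagram} together with Theorem \ref{thm:Ardakov-Wadsley}, as in Corollary \ref{cor:abstract-Swan}. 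This buys a self-contained proof of the first assertion (it in effect recovers the $K_0$-consequence of the cited result of Witte from the Ardakov--Wadsley input already used in the paper), at the cost of length; your remark on why the detour must pass through the prime above $p$ is also accurate.

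The second assertion is where your proposal has a genuine gap. Your chase correctly gives the surjectivity of $\nu$ (from Corollary \ref{cor:connecting-hom} and $\nu\circ\partial_{\Lambda^{\mathcal O}(\mathcal G),\mathcal Q^F(\mathcal G)}=\partial_{\Lambda_{\mathfrak p}^{\mathcal O}(\mathcal G),\mathcal Q^F(\mathcal G)}$) and exactness at the middle term, but for the injectivity of $\mu$ you only \emph{reduce} the statement: once the three connecting homomorphisms are known to be surjective, the three relative groups are identified with $K_1(\Lambda_{\mathfrak p}^{\mathcal O}(\mathcal G))/\mathrm{im}\,K_1(\Lambda^{\mathcal O}(\mathcal G))$, $K_1(\mathcal Q^F(\mathcal G))/\mathrm{im}\,K_1(\Lambda^{\mathcal O}(\mathcal G))$ and $K_1(\mathcal Q^F(\mathcal G))/\mathrm{im}\,K_1(\Lambda_{\mathfrak p}^{\mathcal O}(\mathcal G))$, and left-exactness becomes precisely the containment $\ker\bigl(K_1(\Lambda_{\mathfrak p}^{\mathcal O}(\mathcal G))\to K_1(\mathcal Q^F(\mathcal G))\bigr)\subseteq \mathrm{im}\bigl(K_1(\Lambda^{\mathcal O}(\mathcal G))\to K_1(\Lambda_{\mathfrak p}^{\mathcal O}(\mathcal G))\bigr)$ that you isolate. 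You then write that you ``would hope'' to deduce this from vanishing or comparison results for $SK_1$; that is not a proof, and the results you gesture at do not apply off the shelf (Oliver-type computations of $SK_1$ of orders live over Dedekind or complete local base rings, whereas here the base $\Lambda^{\mathcal O}(\Gamma_0)$ is a two-dimensional regular local ring and $\Lambda_{\mathfrak p}^{\mathcal O}(\Gamma_0)$ is a non-complete DVR). The paper obtains the whole short exact sequence from the surjectivity of the three connecting homomorphisms by a diagram chase; your analysis correctly locates the one non-formal point of that chase, but as written your proposal leaves the left-exactness unproven. To complete it you must either supply the missing containment or establish injectivity of $\mu$ by another route, for instance via the identification of these relative $K_0$-groups with $K_0$ of the corresponding categories of torsion modules of finite projective dimension and a decomposition according to support.
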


\begin{proof}
	Consider the long exact sequences
	\eqref{eqn:long-exact-seq} for the three occurring pairs.
	The connecting homomorphism 
	\[
	\partial_{\Lambda^{\mathcal{O}}(\mathcal G), \mathcal{Q}^F(\mathcal G)}:
	K_1(\mathcal{Q}^F(\mathcal G)) \longrightarrow 
	K_0(\Lambda^{\mathcal{O}}(\mathcal G), \mathcal{Q}^F(\mathcal G))
	\]
	is surjective by \cite[Corollary 3.8]{MR3034286}.
	The result follows from Corollary \ref{cor:connecting-hom}
	by an easy diagram chase.
\end{proof}

\section{Homotopy theory} \label{sec:homotopy}

\subsection{Homotopy of modules}
We briefly recall basic material of homotopy theory of modules.
The reader may also consult Jannsen \cite[\S 1]{MR1097615} and
\cite[Chapter V, \S 4]{MR2392026}.

Let $\Lambda$ be a ring.
If a homomorphism $f: M \rightarrow N$ of $\Lambda$-modules 
factors through a projective $\Lambda$-module,
then we say that $f$ is \emph{homotopic to zero} and we write $f \sim 0$.
Two homomorphisms $f,g: M \rightarrow N$ are \emph{homotopic} ($f \sim g$)
if $f-g$ is homotopic to zero. We let $\Ho(\Lambda)$ be the
homotopy category of $\Lambda$-modules. This category has the same objects
as $\Mod(\Lambda)$, but the homomorphism groups are given by
$\Hom_{\Lambda}(M,N) / \left\{f \sim 0 \right\}$.
A homomorphism $f: M \rightarrow N$ of $\Lambda$-modules is a
\emph{homotopy equivalence} if it is an isomorphism in $\Ho(\Lambda)$.
In this case, we say that $M$ and $N$ are \emph{homotopy equivalent}
and write $M \sim N$.

For any (left) $\Lambda$-module $M$ and integer $i \geq 0$ 
we define (right) $\Lambda$-modules $M^+ := \Hom_{\Lambda}(M, \Lambda)$ and
$E^i(M) := \Ext_{\Lambda}^i(M, \Lambda)$. In particular, we have
$M^+ = E^0(M)$.
We denote the full subcategory of $\Ho(\Lambda)$ whose objects are
finitely presented $\Lambda$-modules by $\Ho^{fp}(\Lambda)$.
The \emph{transpose} is a contravariant functor
\[
	D: \Ho^{fp}(\Lambda) \longrightarrow \Ho^{fp}(\Lambda)
\]
that on objects is given as follows. 
Let $M$ be a finitely presented $\Lambda$-module and choose a presentation
\[
	P_1 \longrightarrow P_0 \longrightarrow M \longrightarrow 0
\]
by finitely generated projective $\Lambda$-modules. Then $DM$ is defined
by the exact sequence
\[
	0 \longrightarrow M^+ \longrightarrow P_0^+ \longrightarrow P_1^+ \longrightarrow
	DM \longrightarrow 0.
\]
The transpose is a contravariant autoduality of $\Ho^{fp}(\Lambda)$, 
i.e.~$D \circ D \simeq \id$, by \cite[Proposition 5.4.9]{MR2392026}.
Moreover, for every finitely presented $\Lambda$-module $M$ 
there is an exact sequence of $\Lambda$-modules
\begin{equation} \label{eqn:transpose-sequence}
	0 \longrightarrow E^1(DM) \longrightarrow M \xrightarrow{\phi_M} M^{++}
	\longrightarrow E^2(DM) \longrightarrow 0,
\end{equation}
where $\phi_M$ is the canonical map of $M$ to its bidual.

\subsection{Homotopy of Iwasawa modules}
Let $\mathcal{G}$ be a one-dimensional $p$-adic Lie group.
As in subsection \S \ref{subsec:iwasawa-alg} we choose a central
subgroup $\Gamma_0 \simeq \Z_p$ in $\mathcal{G}$ and view
$\Lambda(\mathcal{G})$ as a $\Lambda(\Gamma_0)$-order in 
$\mathcal{Q}(\mathcal{G})$. We denote the set of prime
ideals in $\Lambda(\Gamma_0)$ of height $1$ by $\boldsymbol P_0$.
We let $^{\sharp}: \mathcal Q(\mathcal{G}) \rightarrow
\mathcal Q(\mathcal{G})$ be the anti-involution that maps each group
element $g \in \mathcal{G}$ to its inverse.
For every $\mathfrak p \in \boldsymbol{P}_0$ we have
$\mathfrak p^{\sharp} := \left\{x^{\sharp} \mid x \in \mathfrak p \right\}
\in \boldsymbol{P}_0$ and in particular an equality
$(\Lambda_{\mathfrak p}(\mathcal{G}))^{\sharp} = 
\Lambda_{\mathfrak p^{\sharp}}(\mathcal{G})$.

The functors $D$ and $E^i$ interchange left and right $\Lambda$-action.
If $\Lambda = \Lambda(\mathcal{G})$ is the Iwasawa algebra,
then we have a natural equivalence between left and
right modules, induced by the anti-involution $^{\sharp}$.
We then endow $DM$ and $E^i(M)$ with this left module structure.
Namely, for $\lambda \in \Lambda(\mathcal{G})$ and $x \in DM$ or
$x \in E^i(M)$ we let $\lambda \cdot x  := x \cdot \lambda^{\sharp}$.
Similarly, if $\Lambda = \Lambda_{\mathfrak p}(\mathcal{G})$ for some
$\mathfrak p \in \boldsymbol{P}_0$, then for every finitely presented
left $\Lambda_{\mathfrak p}(\mathcal{G})$-module $M$, the transpose 
$DM$ and $E^i(M)$ are natural left 
$\Lambda_{\mathfrak p^{\sharp}}(\mathcal{G})$-modules.

The functors $D$ and $E^i$ then commute with localization in the sense
that for every prime ideal $\mathfrak p$ of $\Lambda(\Gamma_0)$ we have
$DM_{\mathfrak p} = (DM)_{\mathfrak p^{\sharp}}$ and
$E^i(M_{\mathfrak p}) = E^i(M)_{\mathfrak p^{\sharp}}$;
here and in the following the notation $DM_{\mathfrak p}$ always means
the transpose of $M_{\mathfrak p}$ and \emph{not} the localization
of $DM$ at $\mathfrak p$.
In particular, for every finitely generated $\Lambda(\mathcal{G})$-module
$M$ and every $\mathfrak p \in \boldsymbol{P}_0$
we have $E^2(M_{\mathfrak p}) = E^2(M)_{\mathfrak p^{\sharp}} = 0$
by \cite[Proposition 5.5.3]{MR2392026}. In fact,
we have the following result which will often be used without
reference.

\begin{lemma} \label{lem:E2-vanishes}
	Let $\mathcal{G}$ be a one-dimensional $p$-adic Lie group and let
	$\mathfrak p \in \boldsymbol{P}_0$.
	Then $E^2(M)$ vanishes for every finitely generated 
	$\Lambda_{\mathfrak p}(\mathcal{G})$-module $M$. In particular,
	there is an exact sequence
	\[
		0 \longrightarrow E^1(DM) \longrightarrow M \longrightarrow M^{++}
		\longrightarrow 0.
	\]
\end{lemma}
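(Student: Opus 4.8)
The plan is to establish the vanishing of $E^2$ first and then read off the exact sequence from \eqref{eqn:transpose-sequence}. The point is to descend an arbitrary finitely generated $\Lambda_{\mathfrak p}(\mathcal{G})$-module to $\Lambda(\mathcal{G})$ and invoke the fact -- noted just before the lemma and resting on \cite[Proposition 5.5.3]{MR2392026} -- that $E^2(N_{\mathfrak q}) = E^2(N)_{\mathfrak q^{\sharp}} = 0$ for every finitely generated $\Lambda(\mathcal{G})$-module $N$ and every $\mathfrak q \in \boldsymbol{P}_0$.

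First I would note that, since $\Gamma_0$ is central in $\mathcal{G}$, the subring $\Lambda(\Gamma_0)$ is central in $\Lambda(\mathcal{G})$ and $\Lambda_{\mathfrak p}(\mathcal{G}) = \Lambda_{\mathfrak p}(\Gamma_0) \otimes_{\Lambda(\Gamma_0)} \Lambda(\mathcal{G})$ is simply the localization of $\Lambda(\mathcal{G})$ at the central multiplicative set $\Lambda(\Gamma_0) \setminus \mathfrak p$; being module-finite over the discrete valuation ring $\Lambda_{\mathfrak p}(\Gamma_0)$, it is noetherian. Given a finitely generated $\Lambda_{\mathfrak p}(\mathcal{G})$-module $M$, choose generators $m_1, \dots, m_k$ and set $N := \sum_{i=1}^{k} \Lambda(\mathcal{G}) m_i \subseteq M$, a finitely generated $\Lambda(\mathcal{G})$-module; because $M$ is already a module over the localization and localization is exact, the inclusion $N \hookrightarrow M$ induces an isomorphism $N_{\mathfrak p} \simeq M$.

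By the compatibility of $E^i$ with localization noted above together with the previous paragraph, $E^2(M) = E^2(N_{\mathfrak p}) = E^2(N)_{\mathfrak p^{\sharp}} = 0$. As $\mathfrak p \in \boldsymbol{P}_0$ was arbitrary, $E^2$ vanishes on every finitely generated $\Lambda_{\mathfrak q}(\mathcal{G})$-module for every $\mathfrak q \in \boldsymbol{P}_0$. For the exact sequence, take a finitely generated -- hence finitely presented -- $\Lambda_{\mathfrak p}(\mathcal{G})$-module $M$; then $DM$ is a finitely presented left $\Lambda_{\mathfrak p^{\sharp}}(\mathcal{G})$-module with $\mathfrak p^{\sharp} \in \boldsymbol{P}_0$, so $E^2(DM) = 0$, and the four-term sequence \eqref{eqn:transpose-sequence} for $M$ collapses to $0 \to E^1(DM) \to M \to M^{++} \to 0$. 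Note that one does not even need $D \circ D \simeq \id$ here.

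The argument is essentially bookkeeping. The points that require a little care are that $\Lambda_{\mathfrak p}(\mathcal{G})$ really is a central localization of $\Lambda(\mathcal{G})$ -- so that an arbitrary finitely generated module over it descends -- and that one must not confuse $\mathfrak p$ with $\mathfrak p^{\sharp}$ when quoting $E^i(M_{\mathfrak p}) = E^i(M)_{\mathfrak p^{\sharp}}$. The only substantive input is that $E^2$ of a finitely generated $\Lambda(\mathcal{G})$-module dies after localizing at every height-$1$ prime of $\Lambda(\Gamma_0)$, which comes from \cite[Proposition 5.5.3]{MR2392026}; it is exactly this codimension feature that makes the localized statement correct, and it would fail for a general order over a discrete valuation ring.
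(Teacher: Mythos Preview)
Your argument is correct, but it follows a different route from the paper's own proof. You descend an arbitrary finitely generated $\Lambda_{\mathfrak p}(\mathcal{G})$-module to a finitely generated $\Lambda(\mathcal{G})$-module and then invoke the fact, already recorded immediately before the lemma, that $E^2(N)_{\mathfrak p^{\sharp}}=0$ for such $N$ by \cite[Proposition~5.5.3]{MR2392026}. The paper instead works entirely at the localized level: it uses (the proof of) \cite[Proposition~5.1.8]{MR2392026} to see that $M/E^1(DM)\to M^{++}$ is an injective pseudo-isomorphism, concludes that $E^2(DM)$ is pseudo-null over the discrete valuation ring $\Lambda_{\mathfrak p}(\Gamma_0)$ and hence zero, and then applies this with $M$ replaced by $DM$ together with $D\circ D\simeq\id$ to obtain $E^2(M)\simeq E^2(DDM)=0$. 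Your approach is a bit more economical in that it avoids both the appeal to Proposition~5.1.8 and the autoduality $D\circ D\simeq\id$; the paper's approach, on the other hand, is self-contained over $\Lambda_{\mathfrak p}(\mathcal{G})$ and does not rely on the descent step (which, while routine, does use that $\Lambda_{\mathfrak p}(\mathcal{G})$ is a central localization of the noetherian ring $\Lambda(\mathcal{G})$).
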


\begin{proof}
	The map $M / E^1(DM) \rightarrow M^{++}$
	induced by $\phi_M$ is an injective pseudo-isomorphism
	by (the proof of) \cite[Proposition 5.1.8]{MR2392026}.
	Then sequence \eqref{eqn:transpose-sequence} implies that
	$E^2(DM)$ is pseudo-null as a 
	$\Lambda_{\mathfrak p}(\Gamma_0)$-module and thus vanishes, since
	$\Lambda_{\mathfrak p}(\Gamma_0)$ is a discrete valuation ring.
	Applying this argument to $DM$, we obtain
	$E^2(M) \simeq E^2(DDM) = 0$.
\end{proof}

\begin{lemma} \label{lem:M+projective}
	Let $\mathcal{G}$ be a one-dimensional $p$-adic Lie group
	and let $\Lambda$ be either the Iwasawa algebra $\Lambda(\mathcal{G})$
	or $\Lambda_{\mathfrak p}(\mathcal{G})$ for some prime ideal 
	$\mathfrak p \in \boldsymbol{P}_0$. Let $M$ be a finitely
	generated $\Lambda$-module such that $M^{++}$ has
	finite projective dimension. Then
	the $\Lambda^{\sharp}$-module $M^+$ and 
	the $\Lambda$-module $M^{++}$ are indeed projective.
\end{lemma}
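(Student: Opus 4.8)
The plan is to exploit the structure theory available over a discrete valuation ring, namely that $\Lambda(\mathcal{G})$ is a $\Lambda(\Gamma_0)$-order and $\Lambda_{\mathfrak p}(\mathcal{G})$ is a $\Lambda_{\mathfrak p}(\Gamma_0)$-order with $\Lambda_{\mathfrak p}(\Gamma_0)$ a discrete valuation ring, together with the transpose exact sequence \eqref{eqn:transpose-sequence} (or its localized version Lemma \ref{lem:E2-vanishes}). First I would reduce to showing that $M^{++}$ is projective; once that is known, applying the same reasoning to $DM$ in place of $M$ gives that $(DM)^{++}$ is projective, and since $(DM)^{++} = D(DM^+) $ sits in a transpose sequence with $M^+$, one concludes that $M^+$ is projective as well. (Concretely: $M^+ = E^0(M)$ is the kernel-type term $(DM)^+ \simeq $ well, more directly, $M^+ \simeq (DDM)^+$ and $DDM \simeq M$ in the homotopy category, so $M^+$ is homotopy-equivalent to a module whose bidual is projective; a module that is homotopy equivalent to a projective and is itself a first-syzygy-type module — being a $\Hom$ into $\Lambda$ — is projective.)

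The heart of the matter is therefore: if $N := M^{++}$ has finite projective dimension over $\Lambda$, then $N$ is projective. Here I would use that $N$ is a \emph{reflexive} module (it is a double dual, so $\phi_N$ is an isomorphism by \eqref{eqn:transpose-sequence} applied to $N$, using $E^1(DN) = E^1(D M^{++})$; in fact over the DVR-order situation $N \simeq N^{++}$ canonically). A reflexive module over $\Lambda_{\mathfrak p}(\mathcal{G})$, equivalently a module that is a second syzygy, of finite projective dimension over a ring whose center $\Lambda_{\mathfrak p}(\Gamma_0)$ has global dimension $1$, must be projective: take a finite projective resolution $0 \to P_d \to \cdots \to P_0 \to N \to 0$; since $\Lambda_{\mathfrak p}(\Gamma_0)$ is a DVR and $\Lambda_{\mathfrak p}(\mathcal{G})$ is a $\Lambda_{\mathfrak p}(\Gamma_0)$-lattice, any such $N$ that is torsionfree over $\Lambda_{\mathfrak p}(\Gamma_0)$ and has $E^i(N) = 0$ for $i \geq 1$ is already projective — and $E^1(N) \simeq E^1(M^{++})$ vanishes because $M^{++}$ is torsionfree over the DVR $\Lambda_{\mathfrak p}(\Gamma_0)$ hence reflexive, while $E^2$ vanishes by Lemma \ref{lem:E2-vanishes}. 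I expect the cleanest route is: finite projective dimension plus vanishing of all $E^i(N)$, $i \geq 1$, forces $\pd_\Lambda(N) = 0$ via a standard dimension-shifting argument against the projective resolution.

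For the non-localized case $\Lambda = \Lambda(\mathcal{G})$, I would deduce the result from the localized case by a local–global principle: a finitely generated $\Lambda(\mathcal{G})$-module $N$ that is a $\Lambda(\Gamma_0)$-lattice and has finite projective dimension is projective provided $N_{\mathfrak p}$ is projective over $\Lambda_{\mathfrak p}(\mathcal{G})$ for every $\mathfrak p \in \boldsymbol{P}_0$ and $N$ is projective over the regular local ring $\Lambda(\Gamma_0)$ (which holds as $N$ is a lattice). Since $\Lambda(\Gamma_0)$ has dimension $2$, projectivity of $N$ as a $\Lambda(\mathcal{G})$-module can be checked after localizing at height $1$ primes together with the finiteness of projective dimension; the transpose sequences commute with localization, so $E^i(N)_{\mathfrak p} = E^i(N_{\mathfrak p}) = 0$ reduces everything to the already-handled local situation.

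The main obstacle I anticipate is the last dimension-shifting step: turning ``finite projective dimension $+$ all higher $\Ext$ into $\Lambda$ vanish'' into ``projective.'' Over a DVR-order this is classical (a lattice of finite projective dimension with no higher self-extensions is projective), but one must be careful that $M^{++}$ really is a $\Lambda_{\mathfrak p}(\Gamma_0)$-lattice — this follows because $M^{++} = \Hom_{\Lambda}(M^+, \Lambda)$ is $\Lambda(\Gamma_0)$-torsionfree and finitely generated, hence free over the DVR $\Lambda_{\mathfrak p}(\Gamma_0)$ — and that Lemma \ref{lem:E2-vanishes} indeed kills the only other obstruction.
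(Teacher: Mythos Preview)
Your proposal circles the right idea but is organized backwards and has a real gap in the key step.

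The paper's argument is much more direct. Take a finite projective resolution
\[
0 \to P_d \to \cdots \to P_0 \to M^{++} \to 0.
\]
Since $M^{++}$ is reflexive, it is free over the base ring $\Lambda(\Gamma_0)$ (resp.\ $\Lambda_{\mathfrak p}(\Gamma_0)$); this is \cite[Propositions 5.1.9 and 5.4.17]{MR2392026}. Hence the dualized complex
\[
0 \to (M^{++})^+ \to P_0^+ \to \cdots \to P_d^+ \to 0
\]
is exact. As $M^+$ is itself reflexive, $(M^{++})^+ = M^{+++} = M^+$, and since every $P_i^+$ is projective, splitting from the right shows that $M^+$ is projective. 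Then $M^{++}$, being the dual of a projective, is projective. That is the whole proof, and it treats $\Lambda(\mathcal{G})$ and $\Lambda_{\mathfrak p}(\mathcal{G})$ uniformly.

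Your plan inverts this: you want to prove $M^{++}$ projective first and only then extract $M^+$ by a detour through $DM$, $(DM)^{++}$, and homotopy equivalences. That detour is both unnecessary and, as written, not correct: to ``apply the same reasoning to $DM$'' you would need $(DM)^{++}$ to have finite projective dimension, which you have not established, and the identification ``$(DM)^{++} = D(DM^+)$'' is garbled. The clean observation you are missing is simply that the dualized resolution already has $M^+$ sitting at the left end.

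The genuine gap is your justification that the dualized complex is exact, i.e.\ that $E^i(M^{++}) = 0$ for $i \geq 1$. You write ``$E^1(M^{++})$ vanishes because $M^{++}$ is torsionfree over the DVR $\Lambda_{\mathfrak p}(\Gamma_0)$ hence reflexive'', but reflexivity of $M^{++}$ over $\Lambda$ says $E^1(DM^{++}) = 0$, not $E^1(M^{++}) = 0$. What is actually needed is that $M^{++}$ is \emph{free} over the base ring $\Lambda(\Gamma_0)$ (or its localization); then the resolution is split over the base and dualizing over $\Lambda$ preserves exactness (this uses that the Iwasawa algebra is a Frobenius extension of $\Lambda(\Gamma_0)$, so $\Ext^i_\Lambda(-,\Lambda)$ and $\Ext^i_{\Lambda(\Gamma_0)}(-,\Lambda(\Gamma_0))$ agree). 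This single input handles both the localized and the non-localized case, so your proposed local--global reduction in the last paragraph is superfluous.
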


\begin{proof}
	We assume that $\Lambda = \Lambda(\mathcal{G})$; 
	the other case can be treated similarly.
	We put $d := \pd_{\Lambda(\mathcal{G})}(M^{++})$ and
	choose a projective resolution
	\[
		0 \longrightarrow P_d \longrightarrow \cdots 
		\longrightarrow P_1 \longrightarrow P_0 \longrightarrow
		M^{++} \longrightarrow 0.
	\]
	As $M^+$ and $M^{++}$ are reflexive and thus free as a 
	$\Lambda(\Gamma_0)$-modules
	by \cite[Propositions 5.1.9 and 5.4.17]{MR2392026}, this induces an
	exact sequence of $\Lambda(\mathcal{G})$-modules
	\[
		0 \longrightarrow M^+ \longrightarrow P_0^+ \longrightarrow P_1^+ \longrightarrow
		\cdots \longrightarrow P_d^+ \longrightarrow 0.
	\]
	As each $P_i^+$, $0 \leq i \leq d$ is a projective
	$\Lambda(\mathcal{G})^{\sharp}$-module, 
	so is $M^+$. The result follows.
\end{proof}

The next result shows that Lemma \ref{lem:M+projective} is only interesting
if $\Lambda = \Lambda(\mathcal{G})$ or $\Lambda = \Lambda_{(p)}(\mathcal{G})$.

\begin{lemma} \label{lem:reflexive-projective}
	Let $\mathfrak{p} \in \boldsymbol{P}_0$ be a prime ideal
	and assume that $\mathfrak p \not= (p)$. Let $M$ be a finitely generated
	$\Lambda_{\mathfrak p}(\mathcal{G})$-module.
	Then $M$ is a projective $\Lambda_{\mathfrak p}(\mathcal{G})$-module
	if and only if $M$ is (torsion-)free as a 
	$\Lambda_{\mathfrak p}(\Gamma_0)$-module.
	In particular, every reflexive $\Lambda_{\mathfrak p}(\mathcal{G})$-module
	is projective.
\end{lemma}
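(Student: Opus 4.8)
The plan is to reduce the lemma to the single structural fact that, because $\mathfrak p \neq (p)$, the ring $\Lambda_{\mathfrak p}(\mathcal G)$ is left hereditary. Throughout, put $R := \Lambda_{\mathfrak p}(\Gamma_0)$, a discrete valuation ring with uniformizer $\pi$ (a generator of $\mathfrak p R$) and residue field $k := \Omega_{\mathfrak p}(\Gamma_0)$; recall from \eqref{eq:Lambda-decomp} that $\Lambda_{\mathfrak p}(\mathcal G) = R \ast (\mathcal G / \Gamma_0)$ is finitely generated and free as a module over the central subring $R$, and that $\Omega_{\mathfrak p}(\mathcal G) = \Lambda_{\mathfrak p}(\mathcal G)/\pi\Lambda_{\mathfrak p}(\mathcal G) = k \ast (\mathcal G/\Gamma_0)$.

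First I would isolate the role of the hypothesis. Since $(p)$ is the unique height $1$ prime of $\Lambda(\Gamma_0) \cong \Z_p\llbracket T \rrbracket$ containing $p$, the assumption $\mathfrak p \neq (p)$ forces $p \notin \mathfrak p$, so $p$ --- and hence every rational prime --- is a unit in $R$; in particular $k$ has characteristic $0$ and $[\mathcal G : \Gamma_0]$ is invertible in $k$, so that $\Omega_{\mathfrak p}(\mathcal G) = k \ast (\mathcal G/\Gamma_0)$ is semisimple by Maschke's theorem for crossed products. As $\Lambda_{\mathfrak p}(\mathcal G)$ is module-finite over the commutative noetherian local ring $R$ we have $\pi\Lambda_{\mathfrak p}(\mathcal G) \subseteq \rad(\Lambda_{\mathfrak p}(\mathcal G))$, whence $\rad(\Lambda_{\mathfrak p}(\mathcal G))/\pi\Lambda_{\mathfrak p}(\mathcal G) = \rad(\Omega_{\mathfrak p}(\mathcal G)) = 0$; thus $\rad(\Lambda_{\mathfrak p}(\mathcal G)) = \pi\Lambda_{\mathfrak p}(\mathcal G)$. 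Since $\pi$ is a central non-zero-divisor, multiplication by $\pi$ is an isomorphism $\Lambda_{\mathfrak p}(\mathcal G) \xrightarrow{\sim} \pi\Lambda_{\mathfrak p}(\mathcal G)$ of left $\Lambda_{\mathfrak p}(\mathcal G)$-modules, so $\rad(\Lambda_{\mathfrak p}(\mathcal G))$ is free, in particular projective; by the classical criterion for hereditary orders this means $\Lambda_{\mathfrak p}(\mathcal G)$ is left hereditary, i.e.\ every submodule of a projective left $\Lambda_{\mathfrak p}(\mathcal G)$-module is projective. (Equivalently, one could invoke that a crossed product of $R$ by a finite group whose order is invertible in $R$ satisfies $\mathrm{gl.dim}\,\Lambda_{\mathfrak p}(\mathcal G) \le \mathrm{gl.dim}\, R = 1$; cf.\ \cite{MR1811901}.) This is precisely the step that breaks down for $\mathfrak p = (p)$, where $k$ has characteristic $p$ and $\Omega_{(p)}(\mathcal G)$ has nonzero radical (compare Remark \ref{rem:Hattori}).

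Given this, the equivalence follows readily. If $M$ is projective over $\Lambda_{\mathfrak p}(\mathcal G)$, then it is a direct summand of some free module $\Lambda_{\mathfrak p}(\mathcal G)^n$ and hence, by restriction of scalars along $R \hookrightarrow \Lambda_{\mathfrak p}(\mathcal G)$, a finitely generated projective and therefore free $R$-module; in particular it is torsion-free over $R$ (this direction uses no assumption on $\mathfrak p$). Conversely, suppose $M$ is finitely generated and $R$-torsion-free. Then the localization map $M \to \mathcal Q(\mathcal G) \otimes_{\Lambda_{\mathfrak p}(\mathcal G)} M$ is injective, and since $\mathcal Q(\mathcal G)$ is semisimple, $\mathcal Q(\mathcal G) \otimes_{\Lambda_{\mathfrak p}(\mathcal G)} M$ is a direct summand of a free module $\mathcal Q(\mathcal G)^N$. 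Because $\mathcal Q(\mathcal G) = \Lambda_{\mathfrak p}(\mathcal G)[\pi^{-1}] = \bigcup_{m \geq 0} \pi^{-m}\Lambda_{\mathfrak p}(\mathcal G)$ and $M$ is finitely generated, the image of $M$ is contained in $\pi^{-m}\Lambda_{\mathfrak p}(\mathcal G)^N \cong \Lambda_{\mathfrak p}(\mathcal G)^N$ for some $m$, so $M$ is isomorphic to a submodule of a free $\Lambda_{\mathfrak p}(\mathcal G)$-module and is therefore projective by the hereditariness established above. (Alternatively, $M \hookrightarrow M^{++}$ via $\phi_M$, whose kernel $E^1(DM)$ is $R$-torsion and hence vanishes, and then dualizing a surjection $\Lambda_{\mathfrak p}(\mathcal G)^r \twoheadrightarrow M^+$ embeds $M^{++}$ into $\Lambda_{\mathfrak p}(\mathcal G)^r$.) Finally, a reflexive $\Lambda_{\mathfrak p}(\mathcal G)$-module $M$ satisfies $M \cong M^{++} = \Hom_{\Lambda_{\mathfrak p}(\mathcal G)}(M^+, \Lambda_{\mathfrak p}(\mathcal G))$ (with its ${}^{\sharp}$-twisted left structure), which is $R$-torsion-free since $\Lambda_{\mathfrak p}(\mathcal G)$ is and $\pi$ is central; hence $M$ is projective by the equivalence just proved.

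The crux I anticipate is the hereditariness of $\Lambda_{\mathfrak p}(\mathcal G)$ for $\mathfrak p \neq (p)$: recognizing that the characteristic-$0$ residue field $k$ makes $\Omega_{\mathfrak p}(\mathcal G)$ semisimple, hence $\rad(\Lambda_{\mathfrak p}(\mathcal G)) = \pi\Lambda_{\mathfrak p}(\mathcal G)$, and then marshalling the right homological input --- Maschke's theorem for crossed products together with the radical criterion for hereditary orders (or the global-dimension formula for crossed products). The remaining steps (restriction of scalars, clearing denominators, torsion-freeness of a dual) are routine.
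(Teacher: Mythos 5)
Your proposal is correct, but its main line is organized differently from the paper's. For the key implication (torsion-free over $\Lambda_{\mathfrak p}(\Gamma_0)$ implies projective over $\Lambda_{\mathfrak p}(\mathcal G)$) the paper argues directly with the averaging identity $\Ext^i_{\Lambda_{\mathfrak p}(\mathcal G)}(M,N) \simeq \Ext^i_{\Lambda_{\mathfrak p}(\Gamma_0)}(M,N)^G$ for the finite group $G = \mathcal G/\Gamma_0$, which is available because $|G|$ is invertible in $\Lambda_{\mathfrak p}(\Gamma_0)$ and taking $G$-invariants of $\Q_p[G]$-modules is exact; a $\Lambda_{\mathfrak p}(\Gamma_0)$-free module then has vanishing $\Ext^1$ against every module and is therefore projective. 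You instead first establish that $\Lambda_{\mathfrak p}(\mathcal G)$ is left hereditary --- by computing $\rad(\Lambda_{\mathfrak p}(\mathcal G)) = \pi\Lambda_{\mathfrak p}(\mathcal G)$ from the semisimplicity of $\Omega_{\mathfrak p}(\mathcal G)$ (Maschke for crossed products in residue characteristic $0$) --- and then embed a torsion-free $M$ into a free module by clearing denominators inside $\mathcal Q(\mathcal G) \otimes_{\Lambda_{\mathfrak p}(\mathcal G)} M$. This is a valid and somewhat more structural route: it proves the content of Corollary \ref{cor:pd1} up front, whereas the paper deduces that corollary from the lemma. The one point to tighten is the step from ``$\rad(\Lambda_{\mathfrak p}(\mathcal G))$ is projective'' to ``$\Lambda_{\mathfrak p}(\mathcal G)$ is left hereditary'': the classical radical criterion for hereditary orders is stated over a \emph{complete} discrete valuation ring, and $\Lambda_{\mathfrak p}(\Gamma_0)$ is not complete, so you should either pass to the completion (hereditariness can be checked there) or fall back on the global-dimension bound for crossed products with $|G|$ invertible that you mention parenthetically --- which is, in substance, exactly the paper's averaging argument. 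Your forward direction (restriction of scalars along the free extension $\Lambda_{\mathfrak p}(\Gamma_0) \subseteq \Lambda_{\mathfrak p}(\mathcal G)$) coincides with the paper's, and your explicit justification of the ``in particular'' clause (a reflexive module is a dual, hence torsion-free over the central DVR) is a detail the paper leaves implicit.
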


\begin{proof}
	We first recall that every torsionfree 
	(and in particular every projective)
	$\Lambda_{\mathfrak p}(\Gamma_0)$-module is in fact free,
	since $\Lambda_{\mathfrak p}(\Gamma_0)$ is a discrete valuation ring.
	Now suppose that $M$ is a projective 
	$\Lambda_{\mathfrak p}(\mathcal{G})$-module. As
	$\Lambda_{\mathfrak p}(\mathcal{G})$ is free as a
	$\Lambda_{\mathfrak p}(\Gamma_0)$-module, the module $M$
	is a submodule of a free $\Lambda_{\mathfrak p}(\Gamma_0)$-module
	and thus free.
	For the converse we put $G := \mathcal{G}/\Gamma_0$. 
	Then $G$ is a finite group
	and $|G|$ is invertible in $\Lambda_{\mathfrak p}(\Gamma_0)$
	since $\mathfrak p \not= (p)$. Thus
	$\Hom_{\Lambda_{\mathfrak p}(\Gamma_0)}(M,N)$ is a 
	$\Q_p[G]$-module for any two 
	$\Lambda_{\mathfrak p}(\mathcal{G})$-modules $M$ and $N$.
	Since taking $G$-invariants is an exact functor on $\Q_p[G]$-modules,
	the equality
	\[
		\Hom_{\Lambda_{\mathfrak p}(\mathcal G)}(M,N) = 
		\Hom_{\Lambda_{\mathfrak p}(\Gamma_0)}(M,N)^G
	\]
	implies isomorphisms
	\[
		\Ext^i_{\Lambda_{\mathfrak p}(\mathcal G)}(M,N) \simeq
		\Ext^i_{\Lambda_{\mathfrak p}(\Gamma_0)}(M,N)^G
	\]
	for all $i \geq 0$. This gives the converse implication.
\end{proof}

\begin{remark}
	Suppose that $\mathcal{G} \simeq H \times \Gamma$ and that $p$
	does not divide the cardinality of $H$. Then we can take 
	$\Gamma_0 = \Gamma$ and Lemma \ref{lem:reflexive-projective}
	remains true for $\mathfrak p = (p)$ and the Iwasawa algebra
	$\Lambda(\mathcal{G})$ by \cite[Lemma 5.4.16]{MR2392026}.
\end{remark}

\begin{corollary} \label{cor:pd1}
	Let $\mathfrak{p} \in \boldsymbol{P}_0$ be a prime ideal
	and assume that $\mathfrak p \not= (p)$. Then every finitely generated
	$\Lambda_{\mathfrak p}(\mathcal{G})$-module has projective dimension
	at most $1$.
\end{corollary}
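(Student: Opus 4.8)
The plan is to exhibit, for an arbitrary finitely generated $\Lambda_{\mathfrak p}(\mathcal{G})$-module $M$, an explicit projective resolution of length at most $1$, using Lemma \ref{lem:reflexive-projective} to recognize the syzygy as projective. First I would choose a surjection $\pi \colon F \twoheadrightarrow M$ with $F$ a finitely generated free $\Lambda_{\mathfrak p}(\mathcal{G})$-module, and set $N := \ker(\pi)$, so that we have a short exact sequence
\[
	0 \longrightarrow N \longrightarrow F \longrightarrow M \longrightarrow 0.
\]
Since $\Lambda_{\mathfrak p}(\mathcal{G})$ is module-finite over the noetherian ring $\Lambda_{\mathfrak p}(\Gamma_0)$, it is itself noetherian, and hence the submodule $N \subseteq F$ is again finitely generated.

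Next I would descend to the base ring $\Lambda_{\mathfrak p}(\Gamma_0)$, which is a discrete valuation ring. As $F$ is free over $\Lambda_{\mathfrak p}(\mathcal{G})$ and $\Lambda_{\mathfrak p}(\mathcal{G})$ is free as a $\Lambda_{\mathfrak p}(\Gamma_0)$-module, $F$ is a free $\Lambda_{\mathfrak p}(\Gamma_0)$-module; consequently $N$, being a submodule of $F$, is torsionfree over $\Lambda_{\mathfrak p}(\Gamma_0)$. A finitely generated torsionfree module over a discrete valuation ring is free, so $N$ is free as a $\Lambda_{\mathfrak p}(\Gamma_0)$-module. By Lemma \ref{lem:reflexive-projective} (this is exactly where the hypothesis $\mathfrak p \not= (p)$ enters, via the invertibility of $|\mathcal{G}/\Gamma_0|$) the module $N$ is then projective over $\Lambda_{\mathfrak p}(\mathcal{G})$. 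Therefore the displayed sequence is a projective resolution of $M$ of length at most $1$, which gives $\pd_{\Lambda_{\mathfrak p}(\mathcal{G})}(M) \leq 1$ as claimed.

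I do not expect any genuine obstacle here: the substantive input — that a finitely generated $\Lambda_{\mathfrak p}(\mathcal{G})$-module which is $\Lambda_{\mathfrak p}(\Gamma_0)$-free is automatically $\Lambda_{\mathfrak p}(\mathcal{G})$-projective — has already been established in Lemma \ref{lem:reflexive-projective}, and the only points to check are the noetherianity of $\Lambda_{\mathfrak p}(\mathcal{G})$ (so that the syzygy is finitely generated) and the elementary fact that finitely generated torsionfree modules over a discrete valuation ring are free. If one wished to phrase the conclusion as a statement about the global dimension of $\Lambda_{\mathfrak p}(\mathcal{G})$ rather than about finitely generated modules, one would need to be slightly more careful with non-finitely-generated modules, but the corollary as stated only concerns the finitely generated case and the argument above suffices.
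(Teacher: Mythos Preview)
Your proof is correct and is precisely the argument the paper has in mind: the corollary is stated without proof immediately after Lemma \ref{lem:reflexive-projective}, and the intended derivation is exactly to observe that the first syzygy of any finitely generated $\Lambda_{\mathfrak p}(\mathcal{G})$-module is finitely generated and $\Lambda_{\mathfrak p}(\Gamma_0)$-torsionfree, hence projective by that lemma.
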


\begin{corollary} \label{cor:M-splits}
	Let $\mathfrak{p} \in \boldsymbol{P}_0$ be a prime ideal
	and assume that $\mathfrak p \not= (p)$. Let $M$ be a finitely generated
	$\Lambda_{\mathfrak p}(\mathcal{G})$-module. Then there is an
	isomorphism 
	\[
		M \simeq E^1(DM) \oplus M^{++}.
	\]
\end{corollary}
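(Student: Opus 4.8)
The plan is to combine Lemma \ref{lem:E2-vanishes}, Corollary \ref{cor:pd1}, and Lemma \ref{lem:M+projective} as follows. By Lemma \ref{lem:E2-vanishes} we have an exact sequence
\[
	0 \longrightarrow E^1(DM) \longrightarrow M \longrightarrow M^{++}
	\longrightarrow 0.
\]
So it suffices to show that this sequence splits, and for that it is enough to show that $M^{++}$ is a projective $\Lambda_{\mathfrak p}(\mathcal{G})$-module. But by Corollary \ref{cor:pd1} every finitely generated $\Lambda_{\mathfrak p}(\mathcal{G})$-module has projective dimension at most $1$; in particular $M^{++}$ has finite projective dimension. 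Now Lemma \ref{lem:M+projective} (applied with $\Lambda = \Lambda_{\mathfrak p}(\mathcal{G})$, which is one of the two cases allowed there) tells us that $M^{++}$ is indeed projective. Hence the sequence splits and $M \simeq E^1(DM) \oplus M^{++}$, as desired.

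The steps in order: first invoke Lemma \ref{lem:E2-vanishes} to produce the short exact sequence; second, note via Corollary \ref{cor:pd1} that $M^{++}$ has finite projective dimension; third, apply Lemma \ref{lem:M+projective} to upgrade this to actual projectivity of $M^{++}$; fourth, conclude that a short exact sequence with projective quotient splits. There is essentially no obstacle here — the work has all been done in the preceding lemmas, and this corollary is just the assembly. The only point to be careful about is making sure the hypotheses of Lemma \ref{lem:M+projective} are genuinely met, i.e. that $M^{++}$ is finitely generated (it is, as the bidual of a finitely generated module over a noetherian ring) and has finite projective dimension (guaranteed by $\mathfrak p \neq (p)$ via Corollary \ref{cor:pd1}). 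Note that the hypothesis $\mathfrak p \neq (p)$ is used only through Corollary \ref{cor:pd1}; without it $M^{++}$ need not have finite projective dimension and the argument breaks down.
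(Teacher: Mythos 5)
Your argument is correct. The only difference from the paper's proof is the route you take to the projectivity of $M^{++}$: the paper simply cites Lemma \ref{lem:reflexive-projective}, which for $\mathfrak p \neq (p)$ says that every reflexive (indeed every $\Lambda_{\mathfrak p}(\Gamma_0)$-torsionfree) module is projective, and applies it directly to the bidual $M^{++}$. You instead first invoke Corollary \ref{cor:pd1} to get finite projective dimension of $M^{++}$ and then upgrade to projectivity via Lemma \ref{lem:M+projective}. This is valid --- $M^{++}$ is finitely generated since $\Lambda_{\mathfrak p}(\mathcal{G})$ is noetherian, so both cited results apply --- but it is a slight detour: Corollary \ref{cor:pd1} is itself a consequence of Lemma \ref{lem:reflexive-projective}, and the paper's remark following Lemma \ref{lem:reflexive-projective} points out that for $\mathfrak p \neq (p)$ Lemma \ref{lem:M+projective} is subsumed by that lemma. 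So the two proofs rest on the same underlying fact; the paper's citation is just the more economical one. Your closing observation about where the hypothesis $\mathfrak p \neq (p)$ enters is accurate and consistent with the paper's treatment of the case $\mathfrak p = (p)$ (e.g.\ in Theorem \ref{thm:iso-X_S}, where the splitting for $(p)$ requires the extra input $(Z_S)_{(p)}=0$ rather than this corollary).
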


\begin{proof}
	This follows from Lemma \ref{lem:E2-vanishes} and
	Lemma \ref{lem:reflexive-projective}.
\end{proof}

\begin{corollary} \label{cor:augideal-free}
	For every $\mathfrak p \in \boldsymbol{P}_0$ the 
	$\Lambda_{\mathfrak p}(\mathcal{G})$-module
		$\Delta(\mathcal{G})_{\mathfrak p}$ is free of rank $1$.
\end{corollary}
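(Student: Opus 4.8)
The plan is to exhibit $\Delta(\mathcal{G})_{\mathfrak p}$ as the kernel of the localized augmentation map $\Lambda_{\mathfrak p}(\mathcal{G}) \twoheadrightarrow \Z_{p,\mathfrak p}$ (where $\Z_{p,\mathfrak p}$ denotes the appropriate localization of $\Z_p$ regarded as a $\Lambda(\Gamma_0)$-module via the augmentation), deduce that it is a projective $\Lambda_{\mathfrak p}(\mathcal{G})$-module, and then pin down its rank by a base change argument using Corollary \ref{cor:Swan-Iwasawa}. First I would note that the augmentation sequence $0 \to \Delta(\mathcal{G}) \to \Lambda(\mathcal{G}) \to \Z_p \to 0$ localizes (localization at $\mathfrak p \in \boldsymbol{P}_0$ is flat over $\Lambda(\Gamma_0)$) to a short exact sequence
\[
0 \longrightarrow \Delta(\mathcal{G})_{\mathfrak p} \longrightarrow \Lambda_{\mathfrak p}(\mathcal{G}) \longrightarrow (\Z_p)_{\mathfrak p} \longrightarrow 0.
\]
Here $(\Z_p)_{\mathfrak p}$ is a finitely generated torsion $\Lambda_{\mathfrak p}(\Gamma_0)$-module (as $\Z_p$ is $\Lambda(\Gamma_0)$-torsion and $\mathfrak p$ has height $1$), hence is annihilated by a power of the uniformizer; in particular it is not free over the discrete valuation ring $\Lambda_{\mathfrak p}(\Gamma_0)$. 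But $\Delta(\mathcal{G})_{\mathfrak p}$, being a submodule of the free $\Lambda_{\mathfrak p}(\Gamma_0)$-module $\Lambda_{\mathfrak p}(\mathcal{G})$, is torsionfree and therefore free over $\Lambda_{\mathfrak p}(\Gamma_0)$.

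Next I would show $\Delta(\mathcal{G})_{\mathfrak p}$ is projective as a $\Lambda_{\mathfrak p}(\mathcal{G})$-module. When $\mathfrak p \not= (p)$ this is immediate from Lemma \ref{lem:reflexive-projective}, since we have just seen it is $\Lambda_{\mathfrak p}(\Gamma_0)$-free. For the case $\mathfrak p = (p)$ one argues instead that $(\Z_p)_{(p)} = \Q_p$ (as a $\Lambda_{(p)}(\Gamma_0)$-module it is the residue field, on which $\mathcal{G}$ acts trivially), and $\mathcal{Q}(\mathcal{G})$ is a separable algebra over the field $\mathcal{Q}(\Gamma_0)$, so every $\mathcal{Q}(\mathcal{G})$-module is projective; combined with the fact that $\Q_p$ has a length-one projective resolution over $\Lambda_{(p)}(\mathcal{G})$ — which follows from the above sequence once we know $\Delta(\mathcal{G})_{(p)}$ is projective, so this needs care — it is cleaner to invoke directly that $\Lambda_{(p)}(\mathcal{G})$ is a (localized) crossed product $\Lambda_{(p)}(\Gamma_0) \ast (\mathcal{G}/\Gamma_0)$ over a discrete valuation ring and that the augmentation ideal of a group algebra over a field is projective only in special cases. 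The honest route: base-change the localized augmentation sequence to $\mathcal{Q}(\mathcal{G})$, obtaining $0 \to \Delta(\mathcal{G})_{\mathfrak p} \otimes \mathcal{Q}(\mathcal{G}) \to \mathcal{Q}(\mathcal{G}) \to \Q_p \otimes \mathcal{Q}(\mathcal{G}) \to 0$; since $\mathcal{Q}(\mathcal{G})$ is semisimple this splits, so $\Delta(\mathcal{G})_{\mathfrak p} \otimes_{\Lambda_{\mathfrak p}(\mathcal{G})} \mathcal{Q}(\mathcal{G})$ is a $\mathcal{Q}(\mathcal{G})$-module. To upgrade $\Delta(\mathcal{G})_{\mathfrak p}$ itself to projective over $\Lambda_{\mathfrak p}(\mathcal{G})$ I would observe it has projective dimension at most one: it is the kernel of a surjection $\Lambda_{\mathfrak p}(\mathcal{G}) \to (\Z_p)_{\mathfrak p}$, and $(\Z_p)_{\mathfrak p}$ has projective dimension at most $2$ over $\Lambda_{\mathfrak p}(\mathcal{G})$ (as $\Lambda(\Gamma_0)$ is regular of dimension $2$ and $\mathfrak p$ has height $1$, the localization has global dimension at most $1$ away from $(p)$, and at $(p)$ one uses that $\Z_p$ has a finite free resolution over $\Lambda(\Gamma_0)$), whence $\Delta(\mathcal{G})_{\mathfrak p}$ has projective dimension at most $1$; then Lemma \ref{lem:M+projective}, or a direct application of Schanuel together with $\Lambda_{\mathfrak p}(\Gamma_0)$-freeness, forces it to be projective. (I expect the referee-proof version of this paper to simply cite the cohomological facts from \cite{MR2392026} \S 5.5 here.)

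Finally I would determine the rank. Having established that $P := \Delta(\mathcal{G})_{\mathfrak p}$ is finitely generated projective over $\Lambda_{\mathfrak p}(\mathcal{G})$, Corollary \ref{cor:Swan-Iwasawa} reduces the isomorphism $P \simeq \Lambda_{\mathfrak p}(\mathcal{G})$ to an isomorphism after base change to $\mathcal{Q}(\mathcal{G})$. From the split exact sequence above, $P \otimes \mathcal{Q}(\mathcal{G}) \oplus (\Q_p \otimes \mathcal{Q}(\mathcal{G})) \simeq \mathcal{Q}(\mathcal{G})$. Now $\Q_p \otimes_{\Lambda_{\mathfrak p}(\mathcal{G})} \mathcal{Q}(\mathcal{G})$ is zero: indeed $\Q_p$ here is a torsion $\Lambda_{\mathfrak p}(\Gamma_0)$-module, hence killed by a central regular element of $\Lambda_{\mathfrak p}(\mathcal{G})$, so it dies upon inverting all central regular elements. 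Therefore $P \otimes_{\Lambda_{\mathfrak p}(\mathcal{G})} \mathcal{Q}(\mathcal{G}) \simeq \mathcal{Q}(\mathcal{G})$ as $\mathcal{Q}(\mathcal{G})$-modules, and Corollary \ref{cor:Swan-Iwasawa} yields $\Delta(\mathcal{G})_{\mathfrak p} \simeq \Lambda_{\mathfrak p}(\mathcal{G})$, i.e.\ free of rank $1$.

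The main obstacle is the case $\mathfrak p = (p)$: away from $(p)$ everything is essentially formal from Lemma \ref{lem:reflexive-projective} plus the torsionfreeness observation, but at $(p)$ one genuinely needs the global-dimension/projective-resolution input to know that $\Delta(\mathcal{G})_{(p)}$ is projective rather than merely $\Lambda_{(p)}(\Gamma_0)$-free. Once projectivity is in hand, the rank computation via Corollary \ref{cor:Swan-Iwasawa} is uniform and short, because the obstruction module $\Q_p$ vanishes after inverting central regular elements.
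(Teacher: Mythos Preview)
Your treatment of the case $\mathfrak p \neq (p)$ is correct and matches the paper: projectivity follows from Lemma~\ref{lem:reflexive-projective}, and freeness of rank one then comes from Corollary~\ref{cor:Swan-Iwasawa} together with the vanishing of $(\Z_p)_{\mathfrak p} \otimes \mathcal{Q}(\mathcal{G})$.

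The case $\mathfrak p = (p)$, however, contains a genuine error. You assert that $(\Z_p)_{(p)} = \Q_p$ and that this is the residue field of $\Lambda_{(p)}(\Gamma_0)$. Both claims are false. Identify $\Lambda(\Gamma_0) \simeq \Z_p\llbracket T\rrbracket$ via $\gamma^{p^n} \mapsto 1+T$; then $T$ acts as zero on $\Z_p$ (trivial $\Gamma_0$-action), while $T \notin (p)$, so $T$ becomes a unit in $\Lambda_{(p)}(\Gamma_0)$. Hence $(\Z_p)_{(p)} = 0$. (The residue field $\Omega_{(p)}(\Gamma_0)$ is the fraction field of $\F_p\llbracket T\rrbracket$, not $\Q_p$; the situation you describe, with $(\Z_p)_{\mathfrak p} = \Q_p$ as the residue field, occurs at $\mathfrak p = (T)$, not at $(p)$.) Once you observe $(\Z_p)_{(p)} = 0$, the localized augmentation sequence collapses to $\Delta(\mathcal{G})_{(p)} \simeq \Lambda_{(p)}(\mathcal{G})$ immediately, and the entire projective-dimension discussion becomes unnecessary. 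This is exactly how the paper handles it: the only prime at which $(\Z_p)_{\mathfrak p}$ survives is $(T)$, and that case is covered by $\mathfrak p \neq (p)$.

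Your attempted workaround at $(p)$ is also not salvageable as written: you try to bound $\pd_{\Lambda_{(p)}(\mathcal{G})}((\Z_p)_{(p)})$ via a finite free resolution of $\Z_p$ over $\Lambda(\Gamma_0)$, but a bound over $\Lambda(\Gamma_0)$ does not transfer to one over $\Lambda(\mathcal{G})$ when $\mathcal{G}$ has $p$-torsion (indeed the paper notes in the introduction that $\Z_p$ then has infinite projective dimension over $\Lambda(\mathcal{G})$). Fortunately none of this is needed once the vanishing of $(\Z_p)_{(p)}$ is recognized.
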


\begin{proof}
	We identify $\Lambda(\Gamma_0)$ with the power series ring
	$\Z_p \llbracket T \rrbracket$ as usual. If $\mathfrak p \not= (T)$
	then $(\Z_p)_{\mathfrak p}$ vanishes so that the exact sequence
	\[
	0 \longrightarrow \Delta(\mathcal{G}) \longrightarrow
	\Lambda(\mathcal{G}) \longrightarrow \Z_p \longrightarrow 0
	\]
	induces an isomorphism $\Delta(\mathcal{G})_{\mathfrak p} \simeq
	\Lambda_{\mathfrak p}(\mathcal{G})$. If $\mathfrak{p} = (T)$
	or more generally if $\mathfrak p \not= (p)$ then 
	$\Delta(\mathcal{G})_{\mathfrak p}$ is a projective
	$\Lambda_{\mathfrak p}(\mathcal{G})$-module by Lemma
	\ref{lem:reflexive-projective}. 
	Then Corollary \ref{cor:Swan-Iwasawa} implies
	that it is free of rank $1$ (in fact, an isomorphism
	$\Lambda_{\mathfrak p}(\mathcal{G}) \simeq 
	\Delta(\mathcal{G})_{\mathfrak p}$ is explicitly given by
	$1 \mapsto (1-\gamma) e_H + (1-e_H)$, where $e_H := |H|^{-1}
	\sum_{h \in H} h$).
\end{proof}

\section{Iwasawa theory of local fields} \label{sec:local}

\subsection{Galois cohomology} \label{subsec:Galois-coh}
If $F$ is a field and $M$ is a topological
$G_F$-module, we write $R\Gamma(F,M)$ for the complex of continuous 
cochains of $G_F$ with coefficients in $M$
and $H^i(F,M)$ for its cohomology in degree $i$. 
We likewise write $H_i(F,M)$ for the $i$-th homology group of $G_F$
with coefficients in $M$.
If $F$ is an algebraic extension
of $\Q_p$ or $\Q$ and $M$ is a discrete or compact $G_F$-module, 
then for $r \in \Z$
we denote the $r$-th Tate twist of $M$ by $M(r)$. For an abelian group
$A$ we write $\widehat A$ for its $p$-completion, that is
$\widehat A = \varprojlim_n A / p^n A$.

Let $L/K$ be a finite Galois extension of $p$-adic fields with 
Galois group $G$.
Let $L_{\infty}$ be an arbitrary $\Z_p$-extension of $L$
with Galois group $\Gamma_L$ and for each $n \in \N$ 
let $L_n$ be its $n$-th layer. We assume that
$L_{\infty} / K$ is again a Galois extension with Galois group
$\mathcal{G} := \Gal(L_{\infty} / K)$.
We let $X_{L_{\infty}}$ denote the Galois group
over $L_{\infty}$ of the maximal abelian pro-$p$-extension
of $L_{\infty}$. We put
\[
Y_{L_{\infty}} := \Delta(G_K)_{G_{L_{\infty}}} 
= \Z_p \widehat \otimes_{\Lambda(G_{L_{\infty}})} \Delta(G_K)
\]
and observe that $\pd_{\Lambda(\mathcal{G})}(Y_{L_{\infty}}) \leq 1$
by \cite[Theorem 7.4.2]{MR2392026}. As $H_1(L_{\infty}, \Z_p)$
canonically identifies with $X_{L_{\infty}}$, taking 
$G_{L_{\infty}}$-coinvariants of the obvious short exact sequence
\[
0 \longrightarrow \Delta(G_K) \longrightarrow \Lambda(G_K) \longrightarrow
\Z_p \longrightarrow 0
\]
yields an exact sequence
\begin{equation} \label{eqn:4term-sequence}
0 \longrightarrow X_{L_{\infty}} \longrightarrow Y_{L_{\infty}}
\longrightarrow \Lambda(\mathcal{G}) \longrightarrow \Z_p \longrightarrow 0
\end{equation}
of $\Lambda(\mathcal{G})$-modules (this should be compared to the 
sequence constructed by Ritter and Weiss \cite[\S 1]{MR1935024};
see also \cite[Proposition 5.6.7]{MR2392026}).
This sequence will be crucial in the following.

\begin{remark}
	The middle arrow in \eqref{eqn:4term-sequence} defines a (perfect)
	complex of $\Lambda(\mathcal{G})$-modules
	\[
		\cdots \longrightarrow 0 \longrightarrow
		Y_{L_{\infty}} \longrightarrow \Lambda(\mathcal{G})
		\longrightarrow 0 \longrightarrow \cdots
	\] 
	If we place $Y_{L_{\infty}}$
	in degree $1$, then this complex and
	$R\Hom(R\Gamma(L_{\infty}, \Q_p / \Z_p), \Q_p/ \Z_p)[-2]$ 
	become isomorphic in 
	the derived category of $\Lambda(\mathcal{G})$-modules 
	by \cite[Proposition 4.1]{local-mc}.
	If $L_{\infty}$ is the unramified $\Z_p$-extension of $L$,
	then this complex plays a key role in the equivariant
	Iwasawa main conjecture for local fields
	as formulated by the author \cite[Conjecture 5.1]{local-mc}.
	In order to verify this conjecture, 
	one may localize at the height $1$ prime ideal $(p)$
	by \cite[Corollary 6.7]{local-mc}. This has motivated our interest
	in the $\Lambda_{(p)}(\mathcal{G})$-module structure of
	$(X_{L_{\infty}})_{(p)}$.
\end{remark}

For any $p$-adic field $F$, we denote 
the group of principal units in $F$
by $U^1(F)$. 
We put $U^1(L_{\infty}) := \varprojlim_n U^1(L_n)$ where the 
transition maps are given by the norm maps. We note that
$\varprojlim_n \widehat{L_n^{\times}} \simeq X_{L_{\infty}}$
by local class field theory.
For each $n \geq 0$ the valuation map
$L_n^{\times} \rightarrow \Z$ induces an exact sequence
\[
	0 \longrightarrow U^1(L_n) \longrightarrow \widehat{L_n^{\times}} \longrightarrow
	\Z_p \longrightarrow 0.
\]
Taking inverse limits over all $n$ induces an exact sequence of
$\Lambda(\mathcal{G})$-modules
\begin{equation} \label{eqn:ramified-ses}
	0 \longrightarrow U^1(L_{\infty}) \longrightarrow X_{L_{\infty}}
	\longrightarrow \Z_p \longrightarrow 0
\end{equation}
if $L_{\infty}/L$ is ramified and an isomorphism
$U^1(L_{\infty}) \simeq X_{L_{\infty}}$ otherwise (also see the proof of
\cite[Theorem 11.2.4]{MR2392026}). 

\subsection{Local Iwasawa modules}
In this subsection we prove analogues of 
\cite[Theorems 11.2.3 and 11.2.4]{MR2392026}
for arbitrary one-dimensional $p$-adic Lie extensions.
As in subsection \S \ref{subsec:iwasawa-alg} we choose a central
subgroup $\Gamma_0 \simeq \Z_p$ in $\mathcal{G}$ and view
$\Lambda(\mathcal{G})$ as a $\Lambda(\Gamma_0)$-order in 
$\mathcal{Q}(\mathcal{G})$. 

\begin{lemma} \label{lem:first-prop}
	For every $\mathfrak p \in \boldsymbol{P}_0$ the following hold.
	\begin{enumerate}
		\item
		We have an isomorphism of $\Lambda_{\mathfrak p}(\mathcal{G})$-modules
		\[
			(Y_{L_{\infty}})_{\mathfrak p} \simeq 
			(X_{L_{\infty}})_{\mathfrak p} \oplus 
			\Lambda_{\mathfrak p}(\mathcal{G});
		\]
		\item
		we have $\pd_{\Lambda_{\mathfrak p}
			(\mathcal{G})}((X_{L_{\infty}})_{\mathfrak p}) 
		= \pd_{\Lambda_{\mathfrak p}(\mathcal{G})}((Y_{L_{\infty}})_{\mathfrak p}) \leq 1$.
	\end{enumerate}
\end{lemma}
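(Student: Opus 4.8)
The plan is to localize the four-term exact sequence \eqref{eqn:4term-sequence} at $\mathfrak p$ and to peel off a free summand.

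First I would split \eqref{eqn:4term-sequence} into the short exact sequence
\[
0 \longrightarrow X_{L_\infty} \longrightarrow Y_{L_\infty} \longrightarrow \Delta(\mathcal{G}) \longrightarrow 0,
\]
obtained by identifying the image of the middle map $Y_{L_\infty} \to \Lambda(\mathcal{G})$ with $\ker(\Lambda(\mathcal{G}) \twoheadrightarrow \Z_p) = \Delta(\mathcal{G})$. Localization at the (central) height $1$ prime $\mathfrak p$ of $\Lambda(\Gamma_0)$ agrees with the base change $\Lambda_{\mathfrak p}(\mathcal{G}) \otimes_{\Lambda(\mathcal{G})} (-)$ and is exact, so it yields a short exact sequence of $\Lambda_{\mathfrak p}(\mathcal{G})$-modules
\[
0 \longrightarrow (X_{L_\infty})_{\mathfrak p} \longrightarrow (Y_{L_\infty})_{\mathfrak p} \longrightarrow \Delta(\mathcal{G})_{\mathfrak p} \longrightarrow 0.
\]
By Corollary \ref{cor:augideal-free} the module $\Delta(\mathcal{G})_{\mathfrak p}$ is free of rank $1$ over $\Lambda_{\mathfrak p}(\mathcal{G})$, hence projective, so this sequence splits; this gives the isomorphism in (i).

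For (ii), I would first recall that $\pd_{\Lambda(\mathcal{G})}(Y_{L_\infty}) \le 1$ by \cite[Theorem 7.4.2]{MR2392026}; applying the flat base change $\Lambda_{\mathfrak p}(\mathcal{G}) \otimes_{\Lambda(\mathcal{G})} (-)$ to a length $1$ projective resolution then shows $\pd_{\Lambda_{\mathfrak p}(\mathcal{G})}((Y_{L_\infty})_{\mathfrak p}) \le 1$. On the other hand, the direct sum decomposition from (i), together with $\pd_{\Lambda_{\mathfrak p}(\mathcal{G})}(\Lambda_{\mathfrak p}(\mathcal{G})) = 0$ and the behaviour of projective dimension under direct sums, forces $\pd_{\Lambda_{\mathfrak p}(\mathcal{G})}((X_{L_\infty})_{\mathfrak p}) = \pd_{\Lambda_{\mathfrak p}(\mathcal{G})}((Y_{L_\infty})_{\mathfrak p})$. (For $\mathfrak p \ne (p)$ the bound $\le 1$ is in any case immediate from Corollary \ref{cor:pd1}, so the real content of (ii) lies in the case $\mathfrak p = (p)$.)

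The argument is short, and the one genuine input is Corollary \ref{cor:augideal-free}: everything else is a diagram chase plus the fact that localizing at a central prime of $\Lambda(\Gamma_0)$ is flat and hence preserves exactness, projectivity, and projective-dimension bounds. Accordingly the main (already resolved) obstacle is hidden inside Corollary \ref{cor:augideal-free}, whose proof upgrades the projectivity of $\Delta(\mathcal{G})_{\mathfrak p}$ to freeness of rank $1$ via the Iwasawa-theoretic analogue of Swan's theorem (Corollary \ref{cor:Swan-Iwasawa}). The only point I would need to be slightly careful about is the identification of the module-theoretic localization $M_{\mathfrak p}$ with the ring extension $\Lambda_{\mathfrak p}(\mathcal{G}) \otimes_{\Lambda(\mathcal{G})} M$, which makes all of these preservation statements automatic.
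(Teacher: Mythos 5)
Your proposal is correct and follows essentially the same route as the paper: split the four-term sequence \eqref{eqn:4term-sequence} at $\Delta(\mathcal{G})$, localize, and use Corollary \ref{cor:augideal-free} to split off the free rank $1$ summand for (i), then deduce (ii) from the decomposition together with $\pd_{\Lambda(\mathcal{G})}(Y_{L_\infty}) \le 1$. No issues.
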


\begin{proof}
	Sequence \eqref{eqn:4term-sequence} and Corollary
	\ref{cor:augideal-free} imply (i).
	For (ii) we compute
	\[
	\pd_{\Lambda_{\mathfrak p}(\mathcal{G})}((X_{L_{\infty}})_{\mathfrak p})
	= \pd_{\Lambda_{\mathfrak p}(\mathcal{G})}((Y_{L_{\infty}})_{\mathfrak p})
	\leq \pd_{\Lambda(\mathcal{G})}(Y_{L_{\infty}}) \leq 1.
	\]
\end{proof}

We denote the group of $p$-power
roots of unity in $L_{\infty}$ by $\mu_{p}(L_{\infty})$.
If $M$ is a $\Z_p$-module, we let $M^{\vee} := \Hom_{\Z_p}(M, \Q_p / \Z_p)$
be its Pontryagin dual. If $M$ is a $\mathcal{G}$-module, we endow
$M^{\vee}$ with the contragredient $\mathcal{G}$-action.

\begin{theorem} \label{thm:iso-XY}
	Put $n := [K:\Q_p]$. Then for every $\mathfrak p \in \boldsymbol{P}_0$ 
	the following hold.
	\begin{enumerate}
		\item 
		If $\mu_{p}(L_{\infty})$ is finite, then
		we have isomorphisms of $\Lambda_{\mathfrak p}(\mathcal{G})$-modules
		\[
		(Y_{L_{\infty}})_{\mathfrak p} \simeq 
		\Lambda_{\mathfrak p}(\mathcal{G})^{n+1}, \quad
		(X_{L_{\infty}})_{\mathfrak p} \simeq 
		\Lambda_{\mathfrak p}(\mathcal{G})^n.
		\]
		\item 
		If $\mu_{p}(L_{\infty})$ is infinite (and thus
		$L_{\infty}/L$ is the cyclotomic $\Z_p$-extension), then
		we have isomorphisms of $\Lambda_{\mathfrak p}(\mathcal{G})$-modules
		\[
		(Y_{L_{\infty}})_{\mathfrak p} \simeq (\Z_p(1))_{\mathfrak p} \oplus
		\Lambda_{\mathfrak p}(\mathcal{G})^{n+1}, \quad
		(X_{L_{\infty}})_{\mathfrak p} \simeq (\Z_p(1))_{\mathfrak p} \oplus
		\Lambda_{\mathfrak p}(\mathcal{G})^n.
		\]
	\end{enumerate}
\end{theorem}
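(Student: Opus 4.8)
The plan is to combine the Iwasawa-theoretic Swan theorem (Corollary~\ref{cor:Swan-Iwasawa}) with the splitting results of \S\ref{sec:homotopy} and an explicit computation over the total ring of fractions $\mathcal{Q}(\mathcal{G})$. By Lemma~\ref{lem:first-prop}(i) it suffices to treat $(Y_{L_{\infty}})_{\mathfrak p}$, since then $(X_{L_{\infty}})_{\mathfrak p}$ is obtained by cancelling one free summand (the group ring $\Lambda_{\mathfrak p}(\mathcal{G})$ has the cancellation property here precisely because Corollary~\ref{cor:Swan-Iwasawa} lets us compare stable isomorphism with $\mathcal{Q}(\mathcal{G})$-isomorphism, and a free summand is detected there). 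By Lemma~\ref{lem:first-prop}(ii) we know $\pd_{\Lambda_{\mathfrak p}(\mathcal{G})}((Y_{L_{\infty}})_{\mathfrak p}) \leq 1$; feeding $M = (Y_{L_{\infty}})_{\mathfrak p}$ into Lemma~\ref{lem:M+projective} (whose hypothesis that $M^{++}$ has finite projective dimension follows once we know the whole module does, since $E^1(DM)$ is then also of finite projective dimension and $M^{++}$ is a summand by Lemma~\ref{lem:E2-vanishes}), we get that $M^{++}$ is projective and $M \simeq E^1(DM) \oplus M^{++}$ via the exact sequence of Lemma~\ref{lem:E2-vanishes}.

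The next step is to identify the two summands. The torsion part $E^1(DM)$ is a pseudo-null $\Lambda_{\mathfrak p}(\Gamma_0)$-module, and since $\Lambda_{\mathfrak p}(\Gamma_0)$ is a discrete valuation ring a pseudo-null module is zero \emph{unless} $\mathfrak p = (p)$; more precisely, the nonzero torsion can only be supported where $\mu_p(L_\infty)$ contributes, and the standard Iwasawa-cohomology computation (as in the proof of \cite[Theorems 11.2.3 and 11.2.4]{MR2392026}, using sequences \eqref{eqn:4term-sequence} and \eqref{eqn:ramified-ses}, together with the identification $\varprojlim_n \widehat{L_n^{\times}} \simeq X_{L_\infty}$ from local class field theory) shows the torsion submodule of $X_{L_\infty}$ is $\mu_p(L_\infty)^{\vee}$-dual-ish, i.e.\ isomorphic to $\Z_p(1)$ when $\mu_p(L_\infty)$ is infinite and trivial otherwise. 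Localizing, $E^1(DM) \simeq (\Z_p(1))_{\mathfrak p}$ in case (ii) and vanishes in case (i); note $(\Z_p(1))_{\mathfrak p} = 0$ anyway unless $\mathfrak p$ is the height-one prime corresponding to the trivial character twisted by the cyclotomic character, so the statement is consistent across all $\mathfrak p \in \boldsymbol P_0$.

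It remains to pin down the rank of the projective module $M^{++}$, and here is where Corollary~\ref{cor:Swan-Iwasawa} does the real work: a finitely generated projective $\Lambda_{\mathfrak p}(\mathcal{G})$-module is determined up to isomorphism by its image under $-\otimes_{\Lambda_{\mathfrak p}(\mathcal{G})}\mathcal{Q}(\mathcal{G})$. So I would compute $\mathcal{Q}(\mathcal{G}) \otimes (Y_{L_{\infty}})_{\mathfrak p}$ directly from sequence \eqref{eqn:4term-sequence}: tensoring with $\mathcal{Q}(\mathcal{G})$ kills $\Z_p$ and $X_{L_\infty}$'s torsion, and the local Euler characteristic formula (the rank of $X_{L_\infty}$ as a $\Lambda(\Gamma_0)$-module, resp.\ the $\mathcal{Q}(\mathcal{G})$-dimension, equals $[L:\Q_p]\cdot[\Lambda(\mathcal{G}):\Lambda(\Gamma_0)]/\ldots$ — concretely the free rank over $\Lambda(\mathcal{G})$ is $[K:\Q_p] = n$ for $X_{L_\infty}$ and $n+1$ for $Y_{L_\infty}$) gives that $\mathcal{Q}(\mathcal{G})\otimes M^{++} \simeq \mathcal{Q}(\mathcal{G})^{n+1}$. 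Since $\Lambda_{\mathfrak p}(\mathcal{G})^{n+1}$ is a projective module with the same $\mathcal{Q}(\mathcal{G})$-base-change, Corollary~\ref{cor:Swan-Iwasawa} forces $M^{++} \simeq \Lambda_{\mathfrak p}(\mathcal{G})^{n+1}$. Combining, $(Y_{L_\infty})_{\mathfrak p} \simeq (\Z_p(1))_{\mathfrak p} \oplus \Lambda_{\mathfrak p}(\mathcal{G})^{n+1}$ (with the first term absent in case (i)), and then Lemma~\ref{lem:first-prop}(i) plus cancellation gives the corresponding statement for $(X_{L_\infty})_{\mathfrak p}$.

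The main obstacle I anticipate is the rank bookkeeping: making sure the ``$n+1$'' is correct requires carefully tracking the $\mathcal{Q}(\mathcal{G})$-dimensions through \eqref{eqn:4term-sequence} — in particular that $Y_{L_\infty}$ acquires exactly one extra free rank over $X_{L_\infty}$ from the $\Lambda(\mathcal{G}) \to \Z_p$ part of that four-term sequence, and that no torsion from $\mu_p$ leaks into the projective summand. A secondary subtlety is justifying that cancellation of the free summand $\Lambda_{\mathfrak p}(\mathcal{G})$ in passing from $Y$ to $X$ is legitimate; this is exactly an application of Corollary~\ref{cor:Swan-Iwasawa} (stably isomorphic projectives with equal $\mathcal{Q}(\mathcal{G})$-base change are isomorphic), but one must first know both $(X_{L_\infty})_{\mathfrak p}$ modulo its torsion and $\Lambda_{\mathfrak p}(\mathcal{G})^{n+1}$ are projective, which is the content of the Lemma~\ref{lem:M+projective}/\ref{lem:E2-vanishes} step applied also to $X$.
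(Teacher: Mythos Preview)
Your overall strategy --- split $(Y_{L_\infty})_{\mathfrak p}$ as torsion plus a projective bidual via Lemma~\ref{lem:E2-vanishes}, identify the projective part using Corollary~\ref{cor:Swan-Iwasawa}, then cancel a free summand to pass to $(X_{L_\infty})_{\mathfrak p}$ --- is exactly the paper's approach. But two of the three substantive steps are not actually carried out.

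\textbf{The torsion computation is the missing idea.} You say ``the standard Iwasawa-cohomology computation\ldots shows the torsion submodule of $X_{L_\infty}$ is $\mu_p(L_\infty)^\vee$-dual-ish, i.e.\ isomorphic to $\Z_p(1)$'', but you give no argument. The paper's key input here is the homotopy equivalence
\[
Y_{L_\infty} \;\sim\; D\bigl(\mu_p(L_\infty)^\vee\bigr)
\]
of \cite[Proposition 5.6.9]{MR2392026}, which comes from identifying the $p$-dualizing module of $G_K$ with $\Q_p/\Z_p(1)$. This immediately gives $E^1(DY_{L_\infty}) \simeq E^1(\mu_p(L_\infty)^\vee)$, which is $0$ in case~(i) and $E^1(\Z_p(-1)) = \Z_p(1)$ in case~(ii). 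Without this (or an equivalent duality statement), your identification of the torsion piece is an assertion, not a proof. Relatedly, your claim that ``$E^1(DM)$ is a pseudo-null $\Lambda_{\mathfrak p}(\Gamma_0)$-module'' is false: $E^1(DM)$ is the \emph{torsion} submodule of $M$, and over the discrete valuation ring $\Lambda_{\mathfrak p}(\Gamma_0)$ pseudo-null means zero --- which would contradict case~(ii).

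\textbf{The projectivity argument is circular as written.} You want to apply Lemma~\ref{lem:M+projective} to conclude that $M^{++}$ is projective, which requires knowing $\pd(M^{++})<\infty$. You argue this follows from $\pd(M)<\infty$ ``since $E^1(DM)$ is then also of finite projective dimension and $M^{++}$ is a summand by Lemma~\ref{lem:E2-vanishes}''. But Lemma~\ref{lem:E2-vanishes} gives only a short exact sequence, not a splitting, and a submodule of a module of finite projective dimension need not itself have finite projective dimension over $\Lambda_{(p)}(\mathcal{G})$. The paper resolves this by \emph{first} computing $E^1(DY_{L_\infty})$ explicitly (as above), then observing that $(\Z_p(1))_{(p)}=0$ handles $\mathfrak p=(p)$, while Corollary~\ref{cor:pd1} handles $\mathfrak p\neq(p)$; only then does one know $\pd(M^{++})<\infty$ for all $\mathfrak p$.

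Finally, for the rank of the free part the paper simply invokes \cite[Theorem 7.4.2]{MR2392026}, which gives directly that $\mathcal{Q}(\mathcal{G})\otimes Y_{L_\infty}$ is free of rank $n+1$; and for the cancellation step from $Y$ to $X$ it uses Krull--Schmidt after completion (\cite[Proposition~30.17 and Corollary~6.15]{MR632548}), not Corollary~\ref{cor:Swan-Iwasawa}, since $(X_{L_\infty})_{\mathfrak p}$ is not itself projective.
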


\begin{proof}
	We first note that it suffices to prove the result
	for $(Y_{L_{\infty}})_{\mathfrak p}$.
	As on earlier occasions, we may then use \cite[Proposition 30.17
	and Corollary 6.15]{MR632548} to deduce the result for
	$(X_{L_{\infty}})_{\mathfrak p}$ from Lemma \ref{lem:first-prop}(i).
	
	As the $p$-dualizing module of $G_K$ naturally identifies with
	$\Q_p/\Z_p(1)$ by \cite[Theorem 7.2.4]{MR2392026}, we have a
	homotopy equivalence of $\Lambda(\mathcal{G})$-modules
	\begin{equation} \label{eqn:basic-homotopy}
		Y_{L_{\infty}} \sim D(\mu_{p}(L_{\infty})^{\vee})
	\end{equation}
	by \cite[Proposition 5.6.9]{MR2392026}.
	We first assume that $\mu_{p}(L_{\infty})$ is finite. Then 
	\eqref{eqn:basic-homotopy} implies that 
	$(Y_{L_{\infty}})_{\mathfrak p} \sim 0$.
	This means that $(Y_{L_{\infty}})_{\mathfrak p}$ is a projective
	$\Lambda_{\mathfrak p}(\mathcal{G})$-module. As $\mathcal{Q}(\mathcal{G})$ is semisimple, the 
	$\mathcal{Q}(\mathcal{G})$-module
	$\mathcal{Q}(\mathcal{G}) \otimes_{\Lambda(\mathcal{G})} Y_{L_{\infty}}$
	is free of rank $n+1$ by
	\cite[Theorem 7.4.2]{MR2392026}.  
	Corollary \ref{cor:Swan-Iwasawa} then gives the result.	
	
	Now assume that $\mu_{p}(L_{\infty})$ is infinite. Then
	\eqref{eqn:basic-homotopy} gives
	\[
		Y_{L_{\infty}} \sim D(\Z_p(-1)).
	\]
	As the functor $D$ induces an autoduality, 
	we have $E^1(DY_{L_{\infty}}) = 
	E^1(\Z_p(-1)) = \Z_p(1)$ and likewise $E^2(DY_{L_{\infty}})
	= E^2(\Z_p(-1)) = 0$ by \cite[Proposition 5.5.3 (iv)
	and Corollary 5.5.7]{MR2392026}. 
	Thus sequence \eqref{eqn:transpose-sequence}
	specializes to
	\begin{equation} \label{eqn:transpose-ses-Y}
	0 \longrightarrow \Z_p(1) \longrightarrow Y_{L_{\infty}}
	\longrightarrow Y_{L_{\infty}}^{++} \longrightarrow 0.
	\end{equation}
	Since $(\Z_p(1))_{(p)}$ vanishes,
	we have $\pd_{\Lambda_{\mathfrak p}(\mathcal{G})} 
	((\Z_p(1))_{\mathfrak p}) \leq 1$ for every $\mathfrak p \in
	\boldsymbol{P}_0$ by Corollary \ref{cor:pd1}.
	The projective dimension of $(Y_{L_{\infty}})_{\mathfrak p}$
	is at most $1$ by Lemma \ref{lem:first-prop}(ii)
	and thus $(Y_{L_{\infty}}^{++})_{\mathfrak p}$
	also has finite projective dimension.
	Lemma \ref{lem:M+projective} implies that the 
	$\Lambda_{\mathfrak p}(\mathcal{G})$-module
	$(Y_{L_{\infty}}^{++})_{\mathfrak p}$ is indeed projective.
	We now may deduce from Corollary \ref{cor:Swan-Iwasawa}
	as above that $(Y_{L_{\infty}}^{++})_{\mathfrak p}$ is free of rank
	$n+1$. By \eqref{eqn:transpose-ses-Y} we get an isomorphism
	\[
		(Y_{L_{\infty}})_{\mathfrak p} \simeq (\Z_p(1))_{\mathfrak p}
		\oplus \Lambda_{\mathfrak p}(\mathcal{G})^{n+1}
	\]
	as desired.
\end{proof}

\begin{corollary} \label{cor:iso-U1}
	For every $\mathfrak p \in \boldsymbol{P}_0$ 
	we have an isomorphism of $\Lambda_{\mathfrak p}(\mathcal{G})$-modules
	\[
	U^1(L_{\infty})_{\mathfrak p} \simeq
	(X_{L_{\infty}})_{\mathfrak p}.
	\]
	In particular, the following hold.
	\begin{enumerate}
		\item 
		If $\mu_{p}(L_{\infty})$ is finite, then
		we have an isomorphism of $\Lambda_{\mathfrak p}(\mathcal{G})$-modules
		\[
		U^1(L_{\infty})_{\mathfrak p} \simeq 
		\Lambda_{\mathfrak p}(\mathcal{G})^n.
		\]
		\item 
		If $\mu_{p}(L_{\infty})$ is infinite (and thus
		$L_{\infty}/L$ is the cyclotomic $\Z_p$-extension), then
		we have an isomorphism of $\Lambda_{\mathfrak p}(\mathcal{G})$-modules
		\[
		U^1(L_{\infty})_{\mathfrak p} \simeq (\Z_p(1))_{\mathfrak p} \oplus
		\Lambda_{\mathfrak p}(\mathcal{G})^n.
		\]
	\end{enumerate}
\end{corollary}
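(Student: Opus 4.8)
The plan is to reduce everything to the short exact sequence \eqref{eqn:ramified-ses}, Theorem \ref{thm:iso-XY}, and the analogue of Swan's theorem (Corollary \ref{cor:Swan-Iwasawa}). Parts (i) and (ii) will follow immediately once the isomorphism $U^1(L_\infty)_{\mathfrak p} \simeq (X_{L_\infty})_{\mathfrak p}$ has been established for every $\mathfrak p \in \boldsymbol P_0$, so the whole task is to prove this isomorphism. If $L_\infty/L$ is unramified there is nothing to do, since then there is already a $\Lambda(\mathcal{G})$-isomorphism $U^1(L_\infty) \simeq X_{L_\infty}$ by the discussion following \eqref{eqn:ramified-ses}, and one simply localizes. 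So assume from now on that $L_\infty/L$ is ramified and work with the exact sequence
\[
0 \longrightarrow U^1(L_\infty) \longrightarrow X_{L_\infty} \longrightarrow \Z_p \longrightarrow 0.
\]

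Identifying $\Lambda(\Gamma_0)$ with the power series ring $\Z_p\llbracket T \rrbracket$ as usual, the $\Lambda(\Gamma_0)$-module $\Z_p$ is cyclic with annihilator $(T)$, so among the height $1$ primes of $\Lambda(\Gamma_0)$ only $\mathfrak p_0 := (T)$ lies in its support; moreover $\mathfrak p_0 \not= (p)$. For every $\mathfrak p \in \boldsymbol P_0$ with $\mathfrak p \not= \mathfrak p_0$ we thus have $(\Z_p)_{\mathfrak p} = 0$, so localizing the exact sequence above at $\mathfrak p$ gives $U^1(L_\infty)_{\mathfrak p} \simeq (X_{L_\infty})_{\mathfrak p}$ directly; in particular this settles $\mathfrak p = (p)$. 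The only remaining prime is $\mathfrak p = \mathfrak p_0$, which is where the real work lies: since the copy of $\Z_p$ cannot simply be split off, I would instead show that $U^1(L_\infty)_{\mathfrak p_0}$ is projective and then invoke the Swan-type theorem.

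Concretely, Theorem \ref{thm:iso-XY} shows that $(X_{L_\infty})_{\mathfrak p_0}$ is free of rank $n$ over $\Lambda_{\mathfrak p_0}(\mathcal{G})$: in the case where $\mu_p(L_\infty)$ is infinite this uses the observation that $(\Z_p(1))_{\mathfrak p_0} = 0$, which holds because $L_\infty/L$ is then the cyclotomic $\Z_p$-extension, so $\Gamma_0$ acts on $\Z_p(1)$ by a non-trivial scalar and hence $T$ acts invertibly after localizing at $\mathfrak p_0$. In particular $(X_{L_\infty})_{\mathfrak p_0}$ is torsionfree over the discrete valuation ring $\Lambda_{\mathfrak p_0}(\Gamma_0)$, and so is its submodule $U^1(L_\infty)_{\mathfrak p_0}$; therefore $U^1(L_\infty)_{\mathfrak p_0}$ is a projective $\Lambda_{\mathfrak p_0}(\mathcal{G})$-module by Lemma \ref{lem:reflexive-projective} (using $\mathfrak p_0 \not= (p)$). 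On the other hand, $T$ is a central regular element of $\Lambda(\mathcal{G})$ annihilating $\Z_p$, so tensoring the exact sequence above with $\mathcal{Q}(\mathcal{G})$ yields
\[
\mathcal{Q}(\mathcal{G}) \otimes_{\Lambda(\mathcal{G})} U^1(L_\infty) \simeq \mathcal{Q}(\mathcal{G}) \otimes_{\Lambda(\mathcal{G})} X_{L_\infty} \simeq \mathcal{Q}(\mathcal{G})^{n},
\]
where the last isomorphism again uses Theorem \ref{thm:iso-XY} together with $\mathcal{Q}(\mathcal{G}) \otimes \Z_p(1) = 0$. Corollary \ref{cor:Swan-Iwasawa} now forces $U^1(L_\infty)_{\mathfrak p_0} \simeq \Lambda_{\mathfrak p_0}(\mathcal{G})^{n} \simeq (X_{L_\infty})_{\mathfrak p_0}$, completing the proof of the isomorphism for all $\mathfrak p \in \boldsymbol P_0$; parts (i) and (ii) then follow by combining it with Theorem \ref{thm:iso-XY}. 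The main obstacle is thus confined entirely to the prime $\mathfrak p_0$, where the naive splitting argument fails and one must instead combine projectivity of $U^1(L_\infty)_{\mathfrak p_0}$ with the new Swan-type theorem; everything else is routine localization bookkeeping.
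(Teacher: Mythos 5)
Your proof is correct, but it takes a genuinely different route from the paper at the key point. The paper treats all $\mathfrak p \in \boldsymbol{P}_0$ uniformly: it applies the functors $E^i$ to the localized sequence \eqref{eqn:ramified-ses}, shows via the vanishing of $\Hom_{\Lambda_{\mathfrak p^{\sharp}}(\mathcal{G})}((\Z_p)_{\mathfrak p^{\sharp}}, Z_{\mathfrak p^{\sharp}})$ that $E^1(U^1(L_{\infty})_{\mathfrak p}) \simeq E^1((X_{L_{\infty}})_{\mathfrak p})$, deduces from Lemma \ref{lem:M+projective} that $U^1(L_{\infty})^{++}_{\mathfrak p}$ is projective, and then splits the sequence $0 \to E^1(DU^1(L_{\infty})_{\mathfrak p}) \to U^1(L_{\infty})_{\mathfrak p} \to U^1(L_{\infty})^{++}_{\mathfrak p} \to 0$. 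You instead observe that the cokernel $\Z_p$ of $U^1(L_{\infty}) \hookrightarrow X_{L_{\infty}}$ is supported (among height $1$ primes of $\Lambda(\Gamma_0) = \Z_p\llbracket T\rrbracket$) only at $(T)$, so that localization kills it for all $\mathfrak p \neq (T)$, and you isolate the single remaining prime $(T) \neq (p)$, where $(\Z_p(1))_{(T)} = 0$ makes $(X_{L_{\infty}})_{(T)}$ free; torsionfreeness of the submodule $U^1(L_{\infty})_{(T)}$ over the discrete valuation ring $\Lambda_{(T)}(\Gamma_0)$ then gives projectivity via Lemma \ref{lem:reflexive-projective}, and Corollary \ref{cor:Swan-Iwasawa} finishes. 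This bypasses the transpose/bidual machinery entirely for this corollary, at the cost of a case distinction; the paper's argument is more uniform and also produces the explicit description of $E^1(DU^1(L_{\infty})_{\mathfrak p})$ as a by-product. Two small points to tighten: the vanishing $(\Z_p(1))_{(T)} = 0$ is best justified by noting that the annihilator $(T-u)$, with $u = \kappa(\gamma_0)-1 \neq 0$ a constant, is not contained in $(T)$ (equivalently, apply Nakayama to $T\cdot(\Z_p(1))_{(T)} = u\cdot(\Z_p(1))_{(T)} = (\Z_p(1))_{(T)}$) --- the bare statement that $T$ acts invertibly does not by itself force a localization at $(T)$ to vanish; and you should note that $U^1(L_{\infty})$ is finitely generated over the noetherian ring $\Lambda(\mathcal{G})$ (being a submodule of $X_{L_{\infty}}$) so that Lemma \ref{lem:reflexive-projective} applies.
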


\begin{proof}
	If $L_{\infty}$ is the unramified $\Z_p$-extension, then
	$U^1(L_{\infty}) \simeq X_{L_{\infty}}$ and the result immediately
	follows from Theorem \ref{thm:iso-XY}. Now suppose that 
	$L_{\infty}/L$ is ramified. Let us put
	$Z_{\mathfrak p^{\sharp}} := E^1((X_{L_{\infty}})_{\mathfrak p})$. Then
	Theorem \ref{thm:iso-XY} implies that
	$Z_{\mathfrak p^{\sharp}}$ vanishes unless 
	$\mu_p(L_{\infty})$ is infinite,
	where we have an isomorphism of 
	$\Lambda_{\mathfrak p^{\sharp}}(\mathcal{G})$-modules
	$Z_{\mathfrak p^{\sharp}} \simeq \Z_p(-1)_{\mathfrak p^{\sharp}}$.
	In both cases we have that
	\begin{equation} \label{eqn:Hom-vanishes}
		\Hom_{\Lambda_{\mathfrak p^{\sharp}}(\mathcal{G})} 
		((\Z_p)_{\mathfrak p^{\sharp}},
		Z_{\mathfrak p^{\sharp}}) = 0.
	\end{equation}
	The exact sequence \eqref{eqn:ramified-ses} 
	localized at $\mathfrak p$ induces a long exact sequence of 
	$\Lambda_{\mathfrak p^{\sharp}}(\mathcal{G})$-modules
	\[
	\cdots \longrightarrow E^1((\Z_p)_{\mathfrak p})
	\longrightarrow E^1((X_{L_{\infty}})_{\mathfrak p})
	\longrightarrow E^1(U^1(L_{\infty})_{\mathfrak p})
	\longrightarrow E^2((\Z_p)_{\mathfrak p})
	\longrightarrow \cdots
	\]
	As we have an isomorphism 
	$E^1((\Z_p)_{\mathfrak p}) \simeq (\Z_p)_{\mathfrak p^{\sharp}}$,
	the second arrow is the zero map by \eqref{eqn:Hom-vanishes}.
	Since $E^i((\Z_p)_{\mathfrak p})$ vanishes for $i \not=1$, 
	we obtain an isomorphism of 
	$\Lambda_{\mathfrak p^{\sharp}}(\mathcal{G})$-modules
	\[
		E^1(U^1(L_{\infty})_{\mathfrak p}) \simeq
		E^1((X_{L_{\infty}})_{\mathfrak p}) = Z_{\mathfrak p^{\sharp}}.
	\]
	In particular
	$E^1(E^1(U^1(L_{\infty})_{\mathfrak p})) \simeq
	E^1(Z_{\mathfrak p^{\sharp}})$ vanishes 
	unless $\mu_p(L_{\infty})$ is infinite,
	where we have $E^1(Z_{\mathfrak p^{\sharp}}) \simeq 
	\Z_p(1)_{\mathfrak p}$.
	Now \cite[Proposition 5.5.8]{MR2392026} and \eqref{eqn:transpose-sequence}
	imply that 
	\[
		E^1(DU^1(L_{\infty})_{\mathfrak p}) \simeq 
		E^1(Z_{\mathfrak p^{\sharp}})
	\]
	which in particular has projective dimension at most $1$.
	It follows that $U^1(L_{\infty})_{\mathfrak p}$ and
	$U^1(L_{\infty})^{++}_{\mathfrak p}$ have finite projective dimension
	by \eqref{eqn:ramified-ses}, Lemma \ref{lem:first-prop} and the
	exact sequence
	\begin{equation} \label{eqn:U1-ses}
		0 \longrightarrow E^1(Z_{\mathfrak p^{\sharp}}) \longrightarrow
		U^1(L_{\infty})_{\mathfrak p} \longrightarrow
		U^1(L_{\infty})^{++}_{\mathfrak p} \longrightarrow 0.
	\end{equation}
	Thus $U^1(L_{\infty})^{++}_{\mathfrak p}$ is indeed a projective
	$\Lambda_{\mathfrak p}(\mathcal{G})$-module by 
	Lemma \ref{lem:M+projective}. We have
	\[
		\mathcal{Q}(\mathcal{G}) \otimes_{\Lambda(\mathcal{G})} 
		U^1(L_{\infty})^{++} \simeq
		\mathcal{Q}(\mathcal{G}) \otimes_{\Lambda(\mathcal{G})} 
		U^1(L_{\infty}) \simeq
		\mathcal{Q}(\mathcal{G}) \otimes_{\Lambda(\mathcal{G})} 
		X_{L_{\infty}} \simeq
		\mathcal{Q}(\mathcal{G})^n
	\]
	by Theorem \ref{thm:iso-XY}.
	It now follows from Corollary \ref{cor:Swan-Iwasawa} that $U^1(L_{\infty})^{++}_{\mathfrak p}$
	is a free $\Lambda_{\mathfrak p}(\mathcal{G})$-module of rank $n$.
	Sequence \eqref{eqn:U1-ses} splits, giving the claim.
\end{proof}

\subsection{Iwasawa theory of $\ell$-adic fields}
We briefly discuss the case where $L/K$ is a finite Galois extension
of $\ell$-adic fields with $p \not= \ell$. Then $L$ has a unique 
$\Z_p$-extension $L_{\infty}$, namely the unramified $\Z_p$-extension.
We again define $\mathcal{G} := \Gal(L_{\infty}/K)$.
For each $n \geq 0$, the valuation map induces an exact sequence
\[
	0 \longrightarrow \mu_p(L_n) \longrightarrow \widehat{L_n^{\times}}
	\longrightarrow \Z_p \longrightarrow 0.
\]
Taking inverse limits over all $n$ yields an isomorphism
of $\Lambda(\mathcal{G})$-modules
\[
	\varprojlim_n \mu_p(L_n) \simeq	
	\varprojlim_n \widehat{L_n^{\times}} =: X_{L_{\infty}}.
\]
The following result is therefore clear 
(see also \cite[Theorem 11.2.3(ii)]{MR2392026}).
We let $\zeta_p$ be a primitive $p$-th roof of unity.

\begin{lemma} \label{lem:l-adic-case}
	For $\ell \not= p$ we have $X_{L_{\infty}} \simeq \Z_p(1)$ if
	$\zeta_p \in L$ and $X_{L_{\infty}} = 0$ otherwise.
\end{lemma}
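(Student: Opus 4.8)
The plan is to compute the inverse limit $T := \varprojlim_n \mu_p(L_n)$ directly, which equals $X_{L_\infty}$ by the isomorphism recorded just above the statement. Since $p \neq \ell$, every $p$-power root of unity is unramified over $\Q_\ell$, so reduction is injective on prime-to-$\ell$ torsion and, by Hensel's lemma, induces an isomorphism $\mu_p(L_n) \simeq \mu_p(\F_{q^{p^n}})$, where $q$ denotes the cardinality of the residue field of $L$ (recall that $L_n/L$ is the unramified extension of degree $p^n$). Thus $\mu_p(L_n)$ is cyclic of order $p^{c_n}$ with $c_n := v_p(q^{p^n}-1)$.

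First I would dispose of the case $\zeta_p \notin L$, which is equivalent to $p \nmid q-1$, hence to $p \nmid q^{p^n}-1$ for all $n$ because $q^{p^n} \equiv q \pmod p$ by Fermat's little theorem. Then every $\mu_p(L_n)$ is trivial and $X_{L_\infty} = T = 0$. Now assume $\zeta_p \in L$, i.e.\ $p \mid q-1$ (automatic when $p=2$), and write $q_n := q^{p^n}$. The transition maps defining $T$ are the norm maps $N_{L_{n+1}/L_n}$, and on a prime-to-$\ell$ root of unity $\zeta$ the norm acts as $\zeta \mapsto \zeta^{m_n}$ with $m_n = \sum_{i=0}^{p-1} q_n^i = (q_n^p-1)/(q_n-1)$, since $\Gal(L_{n+1}/L_n)$ is generated by the Frobenius $\zeta \mapsto \zeta^{q_n}$. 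The lifting-the-exponent lemma (applicable since $q_n \equiv 1 \pmod p$, and for $p=2$ once $n \geq 1$ so that $q_n \equiv 1 \pmod 8$) gives $v_p(m_n) = v_p(q_n^p-1) - v_p(q_n-1) = 1$, and likewise $c_{n+1} = c_n + 1$. Hence the norm sends a generator of $\mu_p(L_{n+1}) \simeq \Z/p^{c_{n+1}}$ to an element of order $p^{c_{n+1}-1} = p^{c_n}$, that is, to a generator of $\mu_p(L_n)$; so the transition maps are surjective. Therefore $T$ is the inverse limit of a tower of cyclic $p$-groups of unbounded order with surjective transitions, and choosing compatible generators yields a $\Z_p$-module isomorphism $\Z_p \simeq T$. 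Finally $\mathcal{G} = \Gal(L_\infty/K)$ acts on each $\mu_p(L_n)$ (note that $L_n/K$ is Galois, as $\Gamma_L^{p^n}$ is characteristic in the normal subgroup $\Gamma_L$ of $\mathcal{G}$) through the cyclotomic character modulo $p^{c_n}$, compatibly under the equivariant norm maps, so $\mathcal{G}$ acts on $T$ through the full cyclotomic character and $X_{L_\infty} = T \simeq \Z_p(1)$.

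The only point requiring a little care is the lifting-the-exponent input, in particular the mild adjustment at $p=2$, where one simply passes to the cofinal subtower $n \geq 1$; everything else is formal once the reduction to residue fields and the explicit description of the norm on roots of unity are in place. (Alternatively, since $p \neq \ell$ the extension $L(\mu_{p^\infty})/L$ is unramified, so when $\zeta_p \in L$ its pro-$p$ part is precisely the unramified $\Z_p$-extension $L_\infty$, and one identifies $T$ with the Tate module $T_p\mu$ directly; I would present the first argument as it keeps the norm maps explicit.)
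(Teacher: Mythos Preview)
Your argument is correct. The paper itself does not give a proof: it simply records the isomorphism $X_{L_\infty} \simeq \varprojlim_n \mu_p(L_n)$ and then declares the lemma ``clear'' with a reference to \cite[Theorem 11.2.3(ii)]{MR2392026}. So you are filling in what the paper leaves implicit, and your route---reduction to residue fields, explicit norm formulas, and lifting-the-exponent---is a perfectly valid way to carry out that computation. Your alternative sketch at the end (that for $\zeta_p \in L$ the tower $L_\infty/L$ is precisely $L(\mu_{p^\infty})/L$, whence the inverse limit is the Tate module $T_p\mu$) is closer in spirit to the one-line justification the paper and its reference have in mind, and is arguably cleaner since it bypasses the explicit norm computation and the $p=2$ adjustment; but your primary argument has the virtue of being self-contained and making the surjectivity of the transition maps completely transparent.
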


\section{Iwasawa theory of number fields} \label{sec:global}

\subsection{The relevant Galois groups}
In this section we consider a finite Galois extension $L/K$ of 
number fields with Galois group $G$. Let $p$ be a prime and let
$L_{\infty}$ be the cyclotomic $\Z_p$-extension of $L$ with $n$-th
layer $L_n$.
We will assume throughout that 
\begin{center}
	\emph{$K$ is totally imaginary if $p=2$}.
\end{center}
We put $\mathcal{G} := \Gal(L_{\infty}/K)$ which is a one-dimensional
$p$-adic Lie group. We may write $\mathcal{G} \simeq H \rtimes \Gamma$,
where $H$ naturally identifies with a subgroup of $G$ 
and $\Gamma \simeq \Z_p$. For every place $v$ of $K$ we choose a place
$w_{\infty}$ of $L_{\infty}$ above $v$ and let $\mathcal{G}_v$
be the decomposition group at $w_{\infty}$. We denote the place
of $L$ below $w_{\infty}$ by $w$ and the completion of $L$ at $w$
by $L_w$.

We choose a finite set $S$ of places of $K$ containing all archimedean
places and all places that ramify in $L_{\infty}/K$. In particular,
all $p$-adic places lie in $S$. 
We denote the ring of integers in $L$ by $\mathcal{O}_L$ and the
ring of $S(L)$-integers by $\mathcal{O}_{L,S}$,
where $S(L)$ denotes the set of places of $L$ above those in $S$.

We let $M_S$ be the maximal pro-$p$-extension
of $L$ which is unramified outside $S$. We put 
$G_S := \Gal(M_S/K)$ and $H_S := \Gal(M_S/L_{\infty})$.
Since $K$ is totally imaginary
if $p=2$, the cohomological $p$-dimension of $G_S$ equals
$2$ by \cite[Proposition 10.11.3]{MR2392026}
(note that our definition of $G_S$ follows
\cite[Chapter XI, \S 3, p.739]{MR2392026}, but slightly
differs from the profinite group $G_S$ considered
in \cite[Chapter X, \S 11]{MR2392026}; however,
the proof of \cite[Lemma 5.3]{MR1097615} shows that both
groups have the same cohomological $p$-dimension).
Choose a presentation $F_d \twoheadrightarrow G_S$ of $G_S$
by a free profinite group $F_d$ of finite rank $d$. Then we obtain a 
commutative diagram (compare \cite[p.~740]{MR2392026})
\[ \xymatrix{
	& 1 \ar[d] & 1 \ar[d] & & \\
	& N \ar@{=}[r] \ar[d] & N \ar[d] & & \\
	1 \ar[r] & R \ar[r] \ar[d] & F_d \ar[r] \ar[d] & 
		\mathcal{G} \ar[r] \ar@{=}[d] & 1 \\
	1 \ar[r] & H_S \ar[r] \ar[d] & G_S \ar[r] \ar[d] & 
		\mathcal{G} \ar[r] & 1 \\
	& 1 & 1 & & 
}\]
with exact rows and columns, where $R$ and $N$ are the kernels of
$F_d \twoheadrightarrow \mathcal{G}$ and $F_d \twoheadrightarrow G_S$,
respectively. Then $G_S$ acts on $N^{\ab}(p)$, the maximal abelian
pro-$p$-quotient of $N$. The module $N^{\ab}_{H_S}(p)$ of $H_S$-coinvariants
of $N^{\ab}(p)$ is a projective $\Lambda(\mathcal{G})$-module
by \cite[Proposition 5.6.7]{MR2392026}. We let $r_1$ and $r_2$ be the
number of real and complex places of $K$, respectively. We let
$S_{\infty}'$ be the set of real places of $K$ becoming complex
in $L_{\infty}$ and put $r_1' := |S_{\infty}'|$.
If we choose $d$ greater than or equal to $r_2 + r_1' + 1$, then
we have an isomorphism of $\Lambda(\mathcal{G})$-modules
\begin{equation} \label{eqn:NabHSp}
	N^{\ab}_{H_S}(p) \simeq
	\Lambda(\mathcal{G})^{d - r_2 - r_1' - 1} \oplus
	\bigoplus_{v \in S_{\infty}'} \Ind_{\mathcal{G}_v}^{\mathcal{G}} \Z_p
\end{equation}
by \cite[Theorem 5.4]{MR1097615}
(see also \cite[Theorem 11.3.10(iii)]{MR2392026};
the assumption that $p$ does not divide
$[L:K]$ is not needed for this part of the theorem).
Here, for a closed subgroup $\mathcal{H}$ of $\mathcal{G}$
and a compact $\Lambda(\mathcal{H})$-module $M$ we let
\[
	\Ind_{\mathcal{H}}^{\mathcal{G}} M := \Lambda(\mathcal{G})
	\widehat{\otimes}_{\Lambda(\mathcal{H})} M
\]
denote compact induction of $M$ from $\mathcal{H}$ to $\mathcal{G}$.

\subsection{Global and semi-local Iwasawa modules}
Let $X_S := H_S^{\ab}$ be the abelianization of $H_S$.
Then $X_S$ is a finitely generated $\Lambda(\mathcal{G})$-module
by \cite[Proposition 11.3.1]{MR2392026}. 
We also consider the `standard' Iwasawa module $X_{nr}$, which is the
Galois group over $L_{\infty}$ of the maximal unramified abelian 
pro-$p$-extension of $L_{\infty}$, and the quotient $X_{cs}^S$ of
$X_{nr}$ that corresponds to the maximal subextension which is completely
decomposed at all primes above $S$. For a finite place $v$ of $K$
we define
\[
	A_v := \varprojlim_n \prod_{w_n \mid v} \widehat{L_{n,w_n}^{\times}}
	\simeq \Ind_{\mathcal{G}_v}^{\mathcal{G}} X_{L_{w, \infty}},
\]
\[
U_v := \varprojlim_n \prod_{w_n \mid v} \widehat{\mathcal{O}_{L_{n,w_n}}^{\times}}
\simeq \left\{ \begin{array}{lll}
\Ind_{\mathcal{G}_v}^{\mathcal{G}} X_{L_{w, \infty}} 
	& \mbox{ if } & v \nmid p\\
\Ind_{\mathcal{G}_v}^{\mathcal{G}} U^1(L_{w, \infty}) 
& \mbox{ if } & v \mid p.
\end{array}
\right.
\]
Here, $L_{w,\infty}$ always denotes the cyclotomic $\Z_p$-extension
of $L_{w}$.
We let $S_f$ be the subset of $S$ comprising all finite places in $S$.
We then define $\Lambda(\mathcal{G})$-modules
\[
A_S := \prod_{v \in S_f} A_v, \quad
U_S := \prod_{v \in S_f} U_v.
\]
Finally, we let
\[
	E_S := \varprojlim_n (\mathcal{O}_{L_n,S}^{\times} \otimes_{\Z} \Z_p),
	 \quad
	E := \varprojlim_n (\mathcal{O}_{L_n}^{\times} \otimes_{\Z} \Z_p).
\]
Since the weak Leopoldt conjecture holds for the cyclotomic $\Z_p$-extension
by \cite[Theorem 10.3.25]{MR2392026}, we obtain from
\cite[Theorem 5.4]{MR1097615} the following commutative
diagram of $\Lambda(\mathcal{G})$-modules with exact rows (see also
\cite[Theorem 11.3.10(i)]{MR2392026}; the assumption 
$p \nmid [L:K]$ is irrelevant, since all maps are certainly
$\mathcal{G}$-equivariant)
\begin{equation} \label{eqn:diagram}
\xymatrix{
	0 \ar[r] & E \ar[r] \ar@{^{(}->}[d] & U_S \ar[r] \ar@{^{(}->}[d] 
		& X_S \ar[r] \ar@{=}[d] & X_{nr} \ar[r] \ar@{->>}[d] & 0 \\
	0 \ar[r] & E_S \ar[r] & A_S \ar[r] & X_S \ar[r] & X_{cs}^S \ar[r] & 0.
	}
\end{equation}

As in the local case, there is an exact sequence of 
$\Lambda(\mathcal{G})$-modules (see \cite[Proposition 5.6.7]{MR2392026})
\begin{equation} \label{eqn:XYDelta}
	0 \longrightarrow X_S \longrightarrow Y_S \longrightarrow \Delta(\mathcal{G})
	\longrightarrow 0,
\end{equation}
where $Y_S := \Delta(G_S)_{H_S}$ is a finitely generated
$\Lambda(\mathcal{G})$-module of projective dimension at most $1$.

\subsection{Structure of global Iwasawa modules}
We now determine the structure of the above Iwasawa modules 
after localization at a prime ideal $\mathfrak p \in \boldsymbol{P}_0$.
We begin with the semi-local Iwasawa modules.

\begin{prop} \label{prop:semi-local}
	Let $S_f(\zeta_p)$ be the set of all finite places $v$ in $S$
	such that $\zeta_p \in L_w$ and put $n := [K:\Q]$. Then for every
	$\mathfrak p \in \boldsymbol{P}_0$ we have isomorphisms of
	$\Lambda_{\mathfrak p}(\mathcal{G})$-modules
	\[
		(A_S)_{\mathfrak p} \simeq (U_S)_{\mathfrak p} \simeq
		\Lambda_{\mathfrak p}(\mathcal{G})^n \oplus
		\bigoplus_{v \in S_f(\zeta_p)} 
		\left(\Ind_{\mathcal{G}_v}^{\mathcal{G}} \Z_p(1)\right)_{\mathfrak p}.
	\]
	In particular, we have 
	$\pd_{\Lambda_{\mathfrak p}(\mathcal{G})}((A_S)_{\mathfrak p}) = 
	\pd_{\Lambda_{\mathfrak p}(\mathcal{G})}((U_S)_{\mathfrak p}) \leq 1$.
\end{prop}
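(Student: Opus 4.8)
The plan is to reduce the statement about $(A_S)_{\mathfrak p}$ and $(U_S)_{\mathfrak p}$ to the local computations already carried out in Corollary \ref{cor:iso-U1} and Lemma \ref{lem:l-adic-case}, together with the analogue of Swan's theorem. First I would analyse each local factor $A_v$ and $U_v$ separately. By definition $A_v \simeq \Ind_{\mathcal{G}_v}^{\mathcal{G}} X_{L_{w,\infty}}$, and since compact induction is exact and commutes with localization at $\mathfrak p$, it suffices to understand $(X_{L_{w,\infty}})_{\mathfrak p}$ and $(U^1(L_{w,\infty}))_{\mathfrak p}$ as modules over $\Lambda_{\mathfrak p}(\mathcal{G}_v)$. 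Here one must be slightly careful: $\mathcal{G}_v$ is again a one-dimensional $p$-adic Lie group, and $L_{w,\infty}/L_w$ is the \emph{cyclotomic} $\Z_p$-extension, so $\mu_p(L_{w,\infty})$ is infinite precisely when $\zeta_p \in L_w$ (for $v \mid p$), while for $v \nmid p$ Lemma \ref{lem:l-adic-case} gives $X_{L_{w,\infty}} \simeq \Z_p(1)$ if $\zeta_p \in L_w$ and $0$ otherwise. Thus in all cases both $A_v$ and $U_v$ become, after localization at $\mathfrak p$, a direct sum of a free $\Lambda_{\mathfrak p}(\mathcal{G}_v)$-module of the appropriate rank and a copy of $(\Z_p(1))_{\mathfrak p}$ exactly when $v \in S_f(\zeta_p)$ (the rank being $[L_w:\Q_p]$ for $v \mid p$ by Corollary \ref{cor:iso-U1}, and $0$ for $v \nmid p$).

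Next I would induct up: $\Ind_{\mathcal{G}_v}^{\mathcal{G}} \Lambda_{\mathfrak p}(\mathcal{G}_v) \simeq \Lambda_{\mathfrak p}(\mathcal{G})$, and summing $[L_w : \Q_p]$ over the places $w \mid v$ for all $v \mid p$ yields $\sum_{v \mid p} \sum_{w \mid v} [L_w:\Q_p] = [L:\Q] = n$ by the usual formula $\sum_{w \mid v}[L_w:K_v] = [L:K]$ and $\sum_{v \mid p}[K_v:\Q_p] = [K:\Q_p']$... more precisely $\sum_{v\mid p}[K:\Q]$-worth of local degrees sums to $n$. For $v \nmid p$ the free part vanishes, so the total free rank of $(A_S)_{\mathfrak p}$ and of $(U_S)_{\mathfrak p}$ is exactly $n$, and the non-free contribution is $\bigoplus_{v \in S_f(\zeta_p)} (\Ind_{\mathcal{G}_v}^{\mathcal{G}} \Z_p(1))_{\mathfrak p}$ in both cases. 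This already gives a (non-canonical) isomorphism $(A_S)_{\mathfrak p} \simeq (U_S)_{\mathfrak p}$ and the claimed shape, \emph{provided} the direct-sum decompositions descend correctly after summing over places — but since each $A_v$, $U_v$ individually decomposes as (free) $\oplus$ ($\Z_p(1)$-part) over $\Lambda_{\mathfrak p}(\mathcal{G}_v)$, and $\Ind$ and $\prod$ are additive, this is immediate.

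For the final sentence about projective dimension, I would argue: a free module has projective dimension $0$, and $(\Ind_{\mathcal{G}_v}^{\mathcal{G}}\Z_p(1))_{\mathfrak p}$ has projective dimension at most $1$. This last point follows because $\Z_p(1)$ has $\pd_{\Lambda(\mathcal{G}_v)} \le 1$ away from $p$ — indeed when $\mathfrak p \ne (p)$ every finitely generated $\Lambda_{\mathfrak p}(\mathcal{G}_v)$-module has $\pd \le 1$ by Corollary \ref{cor:pd1}, and when $\mathfrak p = (p)$ the module $(\Z_p(1))_{(p)}$ vanishes — and $\Ind = \Lambda(\mathcal{G}) \widehat\otimes_{\Lambda(\mathcal{G}_v)} (-)$ is exact and carries projectives to projectives, hence preserves the bound on projective dimension, and localization at $\mathfrak p$ can only decrease it. Taking the direct sum over $v \in S_f(\zeta_p)$ keeps $\pd \le 1$, and adding the free summand does too, so $\pd_{\Lambda_{\mathfrak p}(\mathcal{G})}((A_S)_{\mathfrak p}) = \pd_{\Lambda_{\mathfrak p}(\mathcal{G})}((U_S)_{\mathfrak p}) \le 1$.

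**The main obstacle** I anticipate is bookkeeping rather than conceptual: one must verify that the local identifications $A_v \simeq \Ind_{\mathcal{G}_v}^{\mathcal{G}} X_{L_{w,\infty}}$ are genuine isomorphisms of $\Lambda(\mathcal{G})$-modules (not just abstract $\Z_p$-modules), that localization commutes with the finite product $\prod_{v \in S_f}$ and with compact induction, and that the rank count $\sum_{v\mid p}\sum_{w\mid v}[L_w:\Q_p] = n$ is correctly assembled — including the subtlety that for $v \nmid p$ the "free rank" genuinely is $0$ (since $X_{L_{w,\infty}}$ is $\Z_p$-torsion there), so those places contribute nothing to the $\Lambda_{\mathfrak p}(\mathcal{G})^n$ factor. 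Everything else is a direct application of Corollary \ref{cor:iso-U1}, Lemma \ref{lem:l-adic-case}, and Corollary \ref{cor:pd1}.
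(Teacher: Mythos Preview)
Your approach is the same as the paper's: decompose $A_S$ and $U_S$ into the local pieces $A_v \simeq \Ind_{\mathcal{G}_v}^{\mathcal{G}} X_{L_{w,\infty}}$ and $U_v$, apply the local structure results (Theorem~\ref{thm:iso-XY} and Corollary~\ref{cor:iso-U1} for $v \mid p$, Lemma~\ref{lem:l-adic-case} for $v \nmid p$), then sum the free ranks via the degree formula. The paper's proof is a single sentence citing exactly these ingredients.

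There is, however, a genuine bookkeeping slip in your rank count. In the local setup of Theorem~\ref{thm:iso-XY} and Corollary~\ref{cor:iso-U1} the free rank is $[K:\Q_p]$, where $K$ is the \emph{base} field of the local Galois extension, not the top field. Applied to the extension $L_{w,\infty}/K_v$ with Galois group $\mathcal{G}_v$, the free rank of $(X_{L_{w,\infty}})_{\mathfrak p}$ over $\Lambda_{\mathfrak p}(\mathcal{G}_v)$ is therefore $[K_v:\Q_p]$, not $[L_w:\Q_p]$. After inducing (which, as you correctly note, sends $\Lambda_{\mathfrak p}(\mathcal{G}_v)$ to $\Lambda_{\mathfrak p}(\mathcal{G})$ without changing the rank), the total free rank is
\[
\sum_{v \mid p} [K_v : \Q_p] = [K:\Q] = n.
\]
There is no inner sum over $w \mid v$: for each $v$ a single $w$ is fixed, and the induction $\Ind_{\mathcal{G}_v}^{\mathcal{G}}$ already encodes all places of $L_{\infty}$ above $v$. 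Your double sum $\sum_{v}\sum_{w}[L_w:\Q_p] = [L:\Q]$ gives the wrong answer unless $L=K$. Once this is fixed, the argument goes through exactly as you describe and coincides with the paper's. (Also note you do not need to invoke the Swan analogue separately here --- it is already baked into Theorem~\ref{thm:iso-XY} and Corollary~\ref{cor:iso-U1}.)
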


\begin{proof}
	This follows from Theorem \ref{thm:iso-XY}, Corollary \ref{cor:iso-U1},
	Lemma \ref{lem:l-adic-case} and the well known formula 
	$[K:\Q] = \sum_{v \mid p} [K_v : \Q_p]$.
\end{proof}

We let $D_2^{(p)}(G_S)$ be the $p$-dualizing module of $G_S$ and put
$Z_S := (D_2^{(p)}(G_S)^{H_S})^{\vee}$.

\begin{lemma} \label{lem:first-prop-global}
	For every $\mathfrak p \in \boldsymbol{P}_0$ the following hold.
	\begin{enumerate}
		\item 
		We have an isomorphism of $\Lambda_{\mathfrak p}(\mathcal{G})$-modules
		\[
			(Y_S)_{\mathfrak p} \simeq (X_S)_{\mathfrak p} \oplus 
			\Lambda_{\mathfrak p}(\mathcal{G});
		\]
		\item
		we have 
		$\pd_{\Lambda_{\mathfrak p}(\mathcal{G})}((X_S)_{\mathfrak p}) = 
		\pd_{\Lambda_{\mathfrak p}(\mathcal{G})}((Y_S)_{\mathfrak p}) \leq 1$;
		\item
		we have a homotopy equivalence 
		$(X_S)_{\mathfrak p} \sim (DZ_S)_{\mathfrak p}$ 
		and an isomorphism of 
		$\Lambda_{\mathfrak p^{\sharp}}(\mathcal{G})$-modules
		\[
			E^1((X_S)_{\mathfrak{p}}) \simeq (Z_S)_{\mathfrak p^{\sharp}}.
		\]
	\end{enumerate}
\end{lemma}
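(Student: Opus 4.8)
The plan is to prove the three assertions of Lemma \ref{lem:first-prop-global} in parallel with the arguments already used in the local case, substituting the global exact sequence \eqref{eqn:XYDelta} and the global homotopy input for their local counterparts. For (i), I would localize the sequence \eqref{eqn:XYDelta} at $\mathfrak p$ and invoke Corollary \ref{cor:augideal-free}, which says $\Delta(\mathcal{G})_{\mathfrak p}$ is free of rank $1$ over $\Lambda_{\mathfrak p}(\mathcal{G})$; since a free module is projective, the localized sequence
\[
0 \longrightarrow (X_S)_{\mathfrak p} \longrightarrow (Y_S)_{\mathfrak p}
\longrightarrow \Delta(\mathcal{G})_{\mathfrak p} \longrightarrow 0
\]
splits, giving $(Y_S)_{\mathfrak p} \simeq (X_S)_{\mathfrak p} \oplus \Lambda_{\mathfrak p}(\mathcal{G})$. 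This is verbatim the argument for Lemma \ref{lem:first-prop}(i), only with $Y_S$, $X_S$, $\Delta(\mathcal{G})$ in place of $Y_{L_\infty}$, $X_{L_\infty}$, $\Lambda(\mathcal{G})$.

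For (ii), the splitting from (i) shows the two projective dimensions agree, and since localization can only decrease projective dimension, $\pd_{\Lambda_{\mathfrak p}(\mathcal{G})}((Y_S)_{\mathfrak p}) \leq \pd_{\Lambda(\mathcal{G})}(Y_S) \leq 1$, using that $Y_S$ has projective dimension at most $1$ as recalled just after \eqref{eqn:XYDelta}. For (iii), the key input is a homotopy equivalence $Y_S \sim D(Z_S^{\vee})$-type statement: since $D_2^{(p)}(G_S)$ is the $p$-dualizing module of $G_S$ and $Z_S = (D_2^{(p)}(G_S)^{H_S})^{\vee}$, the analogue of \cite[Proposition 5.6.9]{MR2392026} used to derive \eqref{eqn:basic-homotopy} in the local case should give a homotopy equivalence $Y_S \sim DZ_S$ of $\Lambda(\mathcal{G})$-modules (here using $\mathrm{cd}_p(G_S) = 2$, which holds under the running hypothesis that $K$ is totally imaginary when $p = 2$). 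Homotopy equivalence is preserved by localization, and by (i) the summand $\Lambda_{\mathfrak p}(\mathcal{G})$ is homotopically trivial, so $(X_S)_{\mathfrak p} \sim (Y_S)_{\mathfrak p} \sim (DZ_S)_{\mathfrak p} = (DZ_{S,\mathfrak p})$. For the $E^1$-computation: from $(X_S)_{\mathfrak p} \sim (DZ_S)_{\mathfrak p}$ and the fact that $E^i$ is a homotopy functor, together with the commutation of $D$ and $E^i$ with localization (recorded in \S\ref{sec:homotopy}), one gets $E^1((X_S)_{\mathfrak p}) = E^1(D(DZ_S)_{\mathfrak p}) \simeq E^1(DD Z_{S,\mathfrak p})$; applying the autoduality $D \circ D \simeq \id$ and then sequence \eqref{eqn:transpose-sequence} for $Z_{S,\mathfrak p}$ (with $E^2$ vanishing by Lemma \ref{lem:E2-vanishes}), one reads off $E^1(DD Z_{S,\mathfrak p}) \simeq E^1(D(DZ_{S,\mathfrak p}))$. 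The cleanest route is: $E^1((X_S)_{\mathfrak p}) \simeq E^1((DZ_S)_{\mathfrak p})$ and then identify this via the transpose sequence \eqref{eqn:transpose-sequence} applied to $DZ_{S,\mathfrak p}$, whose $E^1(D(\cdot))$-term is the kernel of $DZ_{S,\mathfrak p} \to (DZ_{S,\mathfrak p})^{++}$; using $DD \simeq \id$ this kernel is $E^1(DDDZ_{S,\mathfrak p}) = E^1(DZ_{S,\mathfrak p})$ — I would instead argue directly that $(X_S)_{\mathfrak p} \sim (DZ_S)_{\mathfrak p}$ forces $E^1((X_S)_{\mathfrak p}) \simeq (Z_S)_{\mathfrak p^{\sharp}}$ by noting $E^1(DN) \simeq$ the torsion part of $N$ for $N$ with $E^2(DN) = 0$, together with $Z_S$ being $\Lambda(\Gamma_0)$-torsion after localization — or, most economically, cite \cite[Proposition 5.5.8]{MR2392026} exactly as in the proof of Corollary \ref{cor:iso-U1}.

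The step I expect to be the main obstacle is pinning down precisely the global homotopy equivalence $Y_S \sim DZ_S$ and the resulting identification $E^1((X_S)_{\mathfrak p}) \simeq (Z_S)_{\mathfrak p^{\sharp}}$: one must make sure the $\sharp$-twist is bookkept correctly (the transpose and $E^i$ swap left and right modules, converting $\Lambda_{\mathfrak p}(\mathcal{G})$-modules into $\Lambda_{\mathfrak p^{\sharp}}(\mathcal{G})$-modules, as emphasized in \S\ref{sec:homotopy}), and that the cohomological dimension hypothesis is genuinely available — this is where the standing assumption "$K$ totally imaginary if $p=2$" is used, via \cite[Proposition 10.11.3]{MR2392026}. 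Everything else is a transcription of the local arguments. I would write the proof as: "(i) follows from \eqref{eqn:XYDelta} and Corollary \ref{cor:augideal-free}; (ii) is then immediate as in Lemma \ref{lem:first-prop}(ii); for (iii), \cite[Proposition 5.6.9]{MR2392026} together with $\mathrm{cd}_p(G_S)=2$ gives $Y_S \sim DZ_S$, hence $(X_S)_{\mathfrak p} \sim (DZ_S)_{\mathfrak p}$ by (i), and the $E^1$-identification follows from \cite[Proposition 5.5.8]{MR2392026} and \eqref{eqn:transpose-sequence} as in the proof of Corollary \ref{cor:iso-U1}."
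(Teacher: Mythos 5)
Your proposal is correct and follows essentially the same route as the paper: (i) from localizing \eqref{eqn:XYDelta} and applying Corollary \ref{cor:augideal-free}, (ii) from $\pd_{\Lambda(\mathcal{G})}(Y_S)\leq 1$ together with the splitting, and (iii) from $(X_S)_{\mathfrak p}\sim (Y_S)_{\mathfrak p}$ combined with \cite[Proposition 5.6.9]{MR2392026}. The only difference is presentational: the paper deduces both parts of (iii) directly from that one proposition, whereas you take a slightly longer (but equivalent) detour through the transpose sequence for the $E^1$-identification.
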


\begin{proof}
	Sequence \eqref{eqn:XYDelta} and Corollary \ref{cor:augideal-free}
	imply (i). As $Y_S$ is a $\Lambda(\mathcal{G})$-module of
	projective dimension at most $1$, (i) implies (ii).
	By (i) we have $(X_S)_{\mathfrak p} \sim
	(Y_S)_{\mathfrak p}$ and  in particular 
	$E^1((X_S)_{\mathfrak{p}}) \simeq
	E^1((Y_S)_{\mathfrak{p}})$. Hence (iii) is a consequence
	of \cite[Proposition 5.6.9]{MR2392026}.
\end{proof}

We let $\mu_L$ be the Iwasawa $\mu$-invariant of the standard
Iwasawa module $X_{nr}$. We recall the following conjecture of Iwasawa.

\begin{conj}[Iwasawa]
	For every number field $L$ the $\mu$-invariant $\mu_L$ vanishes.
\end{conj}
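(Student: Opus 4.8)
The plan here is necessarily limited: this statement is Iwasawa's classical $\mu = 0$ conjecture, which remains open in general, so what follows describes the known partial results and the barrier to a general argument rather than a complete strategy. The one general reduction tool available is the fact underlying Kida's formula: for a finite $p$-extension $L'/L$ sitting inside a fixed cyclotomic tower one has $\mu_L = 0$ if and only if $\mu_{L'} = 0$, because the modules $X_{nr}$ attached to $L_\infty$ and to $L'_\infty$ then determine one another up to finitely generated $\Z_p$-modules. So any attack would begin by replacing $L$ by a more convenient field; the difficulty is that this procedure stays within $p$-extensions and therefore cannot, in general, connect an arbitrary $L$ to the only cases where $\mu = 0$ is actually known.

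In the abelian case over $\Q$ the required input is the theorem of Ferrero and Washington, whose proof is analytic rather than algebraic. Via Iwasawa's construction (or, equivalently, the Main Conjecture), $X_{nr}$ is, after a twist and up to pseudo-isomorphism, a direct sum of modules $\Lambda(\Gamma_L)/(g_\chi)$, where $g_\chi$ is the Iwasawa power series of a Kubota--Leopoldt $p$-adic $L$-function. One is then reduced to proving $p \nmid g_\chi$, which Ferrero and Washington achieve by showing that the $p$-adic digits of the generalized Bernoulli numbers $B_{1,\chi\omega^{-i}}$ are suitably equidistributed --- a normality statement established by an explicit combinatorial count. The same strategy, carried out for the two-variable $p$-adic $L$-function, gives $\mu = 0$ for abelian extensions of imaginary quadratic fields in which $p$ splits (Gillard, Schneps), and, combined with the Kida reduction, for every $p$-extension of such a field.

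The main obstacle --- and the reason the conjecture is still open --- is that for a general base field, and in particular for a non-abelian $L$ that does not lie inside a $p$-extension of one of the fields above, there is no known $p$-adic analytic object whose $\mu$-invariant can be computed: the Main Conjecture merely transports $\mu$ from the algebraic side to an analytic side that is itself not understood, while the Kida formula only propagates the vanishing rather than establishing it. With current techniques I see no route beyond the abelian-over-$\Q$ case and its $p$-extensions. Accordingly, in this article the conjecture is only \emph{recalled}: I expect the structural results in the sequel to be stated either conditionally on $\mu_L = 0$ or unconditionally precisely in the range --- for instance $L/\Q$ abelian --- where the theorem of Ferrero--Washington applies.
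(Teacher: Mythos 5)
You are right that this is Iwasawa's open $\mu=0$ conjecture: the paper offers no proof, merely recalls the statement, and then assumes $\mu_{L(\zeta_p)}=0$ as a hypothesis in Theorems \ref{thm:iso-X_S} and \ref{thm:iso-E_S} for the case $\mathfrak p=(p)$, exactly as you predicted. Your survey of the known cases (Ferrero--Washington, the Kida-type propagation through $p$-extensions) and of the obstruction in general is accurate and is the appropriate response here.
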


The following two results are analogues of \cite[Theorem 11.3.11]{MR2392026}
for arbitrary one-dimensional $p$-adic Lie extensions (containing the
cyclotomic $\Z_p$-extension).

\begin{theorem} \label{thm:iso-X_S}
	Let $\mathfrak p \in \boldsymbol{P}_0$ and assume that 
	$\mu_{L(\zeta_p)} = 0$ if $\mathfrak p = (p)$. Then the following hold.
	\begin{enumerate}
		\item 
		We have an isomorphism of $\Lambda_{\mathfrak p}(\mathcal{G})$-modules
		\[
			(X_S)_{\mathfrak p} \simeq E^1(Z_S)_{\mathfrak p} \oplus
			(X_S)_{\mathfrak p}^{++};
		\]
		moreover, we have $(Z_S)_{(p)} = 0$ so that in particular
		$(X_S)_{(p)}  \simeq  (X_S)_{(p)}^{++}$;
		\item
		we have isomorphisms of 
		$\Lambda_{\mathfrak p}(\mathcal{G})$-modules
		\[
			((X_S)_{\mathfrak p^{\sharp}})^{+} \simeq 
			(X_S)_{\mathfrak p}^{++} \simeq
			\Lambda_{\mathfrak p}(\mathcal{G})^{r_2} \oplus
			\bigoplus_{v \in S_{\infty}'} 
			\left(\Ind_{\mathcal{G}_v}^{\mathcal{G}} \Z_p^-
			\right)_{\mathfrak p},
		\]
		where $\Z_p^-$ is the $\mathcal{G}_v$-module $\Z_p$ upon which
		the generator of $\mathcal{G}_v \simeq \Z/ 2\Z$ acts by multiplication
		by $-1$.
	\end{enumerate}
\end{theorem}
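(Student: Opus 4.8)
The plan is to deduce part~(i) from the transpose sequence of Lemma~\ref{lem:E2-vanishes} combined with Lemma~\ref{lem:first-prop-global}, and then to obtain part~(ii) by computing the rational hull $\mathcal{Q}(\mathcal{G})\otimes_{\Lambda(\mathcal{G})}X_S$ and feeding it into the Swan-type rigidity of Corollary~\ref{cor:Swan-Iwasawa}.

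For part~(i), fix $\mathfrak p\in\boldsymbol{P}_0$. Lemma~\ref{lem:E2-vanishes} applied to $(X_S)_{\mathfrak p}$ gives an exact sequence $0\to E^1(D(X_S)_{\mathfrak p})\to (X_S)_{\mathfrak p}\to (X_S)_{\mathfrak p}^{++}\to 0$. Applying $D$ to the homotopy equivalence $(X_S)_{\mathfrak p}\sim (DZ_S)_{\mathfrak p}$ of Lemma~\ref{lem:first-prop-global}(iii) and using $D\circ D\simeq\id$ yields $D(X_S)_{\mathfrak p}\sim (Z_S)_{\mathfrak p^\sharp}$, and since $E^1$ annihilates projectives it is homotopy invariant, so $E^1(D(X_S)_{\mathfrak p})\simeq E^1((Z_S)_{\mathfrak p^\sharp})=E^1(Z_S)_{\mathfrak p}$. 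Thus the sequence reads $0\to E^1(Z_S)_{\mathfrak p}\to (X_S)_{\mathfrak p}\to (X_S)_{\mathfrak p}^{++}\to 0$, and to split it it suffices by Lemma~\ref{lem:M+projective} to know that $(X_S)_{\mathfrak p}^{++}$ has finite projective dimension. Since $\pd_{\Lambda_{\mathfrak p}(\mathcal{G})}((X_S)_{\mathfrak p})\le 1$ by Lemma~\ref{lem:first-prop-global}(ii), it is enough that $\pd_{\Lambda_{\mathfrak p}(\mathcal{G})}(E^1(Z_S)_{\mathfrak p})<\infty$. For $\mathfrak p\ne(p)$ this is automatic by Corollary~\ref{cor:pd1} (indeed the splitting then already follows from Corollary~\ref{cor:M-splits}). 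For $\mathfrak p=(p)$ I claim that $(Z_S)_{(p)}=0$; granting this, $E^1(Z_S)_{(p)}=0$ and the sequence degenerates to $(X_S)_{(p)}\simeq (X_S)_{(p)}^{++}$, which is the ``moreover'' assertion. The claim is established by identifying the $p$-dualizing module $D_2^{(p)}(G_S)$ through global (Poitou--Tate) duality: $Z_S=(D_2^{(p)}(G_S)^{H_S})^\vee$ is then seen to be a torsion $\Lambda(\mathcal{G})$-module whose $\mu$-invariant over $\Lambda(\Gamma_0)$ equals $\mu_{L(\zeta_p)}$ (the Tate twist by $\mu_{p^\infty}$ built into $D_2^{(p)}$ being responsible for $L(\zeta_p)$ rather than $L$ appearing), so that the hypothesis $\mu_{L(\zeta_p)}=0$ forces $(Z_S)_{(p)}=0$.

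For part~(ii), part~(i) shows that $(X_S)_{\mathfrak p}^{++}$ is a projective $\Lambda_{\mathfrak p}(\mathcal{G})$-module and, since $E^1(DX_S)$ is $\Lambda(\Gamma_0)$-torsion, that $\mathcal{Q}(\mathcal{G})\otimes_{\Lambda(\mathcal{G})}X_S^{++}\simeq\mathcal{Q}(\mathcal{G})\otimes_{\Lambda(\mathcal{G})}X_S$. By Corollary~\ref{cor:Swan-Iwasawa} it therefore suffices to compute $\mathcal{Q}(\mathcal{G})\otimes_{\Lambda(\mathcal{G})}X_S$ and to note that the claimed right-hand side is projective: for $v\in S_\infty'$ we have $|\mathcal{G}_v|=2$, prime to $p$ (the standing hypothesis forces $S_\infty'=\emptyset$ when $p=2$), so the idempotent $e_-^{(v)}\in\Z_p[\mathcal{G}_v]\subseteq\Lambda(\mathcal{G})$ cutting out $\Z_p^-$ exists and $\Ind_{\mathcal{G}_v}^{\mathcal{G}}\Z_p^-=\Lambda(\mathcal{G})e_-^{(v)}$ is a direct summand of $\Lambda(\mathcal{G})$. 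For the rational computation I would use the presentation $F_d\twoheadrightarrow G_S$: because $F_d$ is free pro-$p$, $\mathrm{cd}_p(G_S)=2$, and the weak Leopoldt conjecture holds for the cyclotomic $\Z_p$-extension (so $H_2(H_S,\Z_p)=0$), taking $H_S$-coinvariants of $0\to N^{\ab}(p)\to\Lambda(G_S)^d\to\Delta(G_S)\to 0$ gives an exact sequence $0\to N^{\ab}_{H_S}(p)\to\Lambda(\mathcal{G})^d\to Y_S\to 0$ of $\Lambda(\mathcal{G})$-modules. Tensoring with the semisimple ring $\mathcal{Q}(\mathcal{G})$, where this sequence splits, and combining with \eqref{eqn:NabHSp}, \eqref{eqn:XYDelta}, and the elementary identity $\Ind_{\mathcal{G}_v}^{\mathcal{G}}\Z_p\oplus\Ind_{\mathcal{G}_v}^{\mathcal{G}}\Z_p^-=\Lambda(\mathcal{G})$, one obtains $\mathcal{Q}(\mathcal{G})\otimes_{\Lambda(\mathcal{G})}X_S\simeq\mathcal{Q}(\mathcal{G})^{r_2}\oplus\bigoplus_{v\in S_\infty'}\mathcal{Q}(\mathcal{G})\otimes_{\Lambda(\mathcal{G})}\Ind_{\mathcal{G}_v}^{\mathcal{G}}\Z_p^-$, the trivial module $\Z_p$ of \eqref{eqn:NabHSp} getting replaced by $\Z_p^-$ through this bookkeeping. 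Corollary~\ref{cor:Swan-Iwasawa} then gives $(X_S)_{\mathfrak p}^{++}\simeq\Lambda_{\mathfrak p}(\mathcal{G})^{r_2}\oplus\bigoplus_v(\Ind_{\mathcal{G}_v}^{\mathcal{G}}\Z_p^-)_{\mathfrak p}$. Finally, applying $\Hom_{\Lambda_{\mathfrak p^\sharp}(\mathcal{G})}(-,\Lambda_{\mathfrak p^\sharp}(\mathcal{G}))$ to the split sequence of part~(i) at $\mathfrak p^\sharp$ kills the torsion summand $E^1(Z_S)_{\mathfrak p^\sharp}$ and produces $((X_S)_{\mathfrak p^\sharp})^+\simeq((X_S)_{\mathfrak p^\sharp}^{++})^+$, a projective $\Lambda_{\mathfrak p}(\mathcal{G})$-module whose base change to $\mathcal{Q}(\mathcal{G})$ is the $\sharp$-contragredient of $\mathcal{Q}(\mathcal{G})\otimes X_S^{++}$; since that module is $\sharp$-self-dual ($\Z_p^-$ being a self-dual $\mathcal{G}_v$-module and induction commuting with $\sharp$-duality), Corollary~\ref{cor:Swan-Iwasawa} identifies $((X_S)_{\mathfrak p^\sharp})^+$ with $(X_S)_{\mathfrak p}^{++}$, finishing part~(ii).

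The main obstacle is the vanishing $(Z_S)_{(p)}=0$ under $\mu_{L(\zeta_p)}=0$. This is the sole point at which the Iwasawa hypothesis is used, and it rests on making precise the structure of $D_2^{(p)}(G_S)$, and hence of $Z_S$, as a Tate-twisted standard Iwasawa module over $L(\zeta_p)$ via global duality. Everything else is formal: the transpose sequence, the homotopy-theoretic input of \S\ref{sec:homotopy}, a rational computation, and Swan-type rigidity.
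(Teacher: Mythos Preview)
Your approach is essentially the same as the paper's: the transpose sequence plus Lemma~\ref{lem:first-prop-global} for part~(i), and the rational computation via \eqref{eqn:NabHSp} fed into Corollary~\ref{cor:Swan-Iwasawa} for part~(ii). Two minor points of comparison are worth noting.

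For the vanishing $(Z_S)_{(p)}=0$, the paper does not argue via Poitou--Tate duality in the abstract way you sketch. Instead it quotes Jannsen \cite[Theorem~5.4(d)]{MR1097615}: when $\zeta_p\in L$ there is an exact sequence $0\to X_{cs}^S(-1)\to Z_S\to T\to 0$ with $T$ finitely generated and free over $\Z_p$; since $X_{nr}\twoheadrightarrow X_{cs}^S$ and $\mu_L=0$, this forces $Z_S$ to be finitely generated over $\Z_p$. When $\zeta_p\notin L$ the paper descends from $L':=L(\zeta_p)$ via the natural isomorphism $(Z_S')_{\Delta}\simeq Z_S$ with $\Delta=\Gal(L'/L)$. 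Your claim that $\mu(Z_S)=\mu_{L(\zeta_p)}$ exactly is not quite right (only an inequality is available, and that suffices), but the overall idea is the same.

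For part~(ii) your derivation of the exact sequence $0\to N^{\ab}_{H_S}(p)\to\Lambda(\mathcal{G})^d\to Y_S\to 0$ and your explicit $\sharp$-self-duality argument for $((X_S)_{\mathfrak p^\sharp})^+$ spell out what the paper compresses into a citation of \cite[Proposition~5.6.7]{MR2392026} together with the bare observation that both modules in question are projective with the same base change to $\mathcal{Q}(\mathcal{G})$.
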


\begin{proof}
	By Lemma \ref{lem:first-prop-global}(iii) we have 
	$E^1(D X_S)_{\mathfrak p} \simeq E^1(Z_S)_{\mathfrak p}$ so that
	(i) follows from Corollary \ref{cor:M-splits} if $\mathfrak p \not= (p)$.
	We claim that $(Z_S)_{(p)}$ vanishes. Then Lemma \ref{lem:E2-vanishes}
	implies (i) in the case $\mathfrak p = (p)$.
	We first assume that $\zeta_p \in L$. Then
	by \cite[Theorem 5.4 (d)]{MR1097615} there is an exact sequence
	of $\Lambda(\Gamma_0)$-modules
	\[
		0 \longrightarrow X_{cs}^S(-1) \longrightarrow Z_S \longrightarrow T
		\longrightarrow 0,
	\]
	where $T$ is finitely generated and free as $\Z_p$-module.
	As $\mu_L$ vanishes by assumption and $X_{nr}$ surjects onto
	$X_{cs}^S$, the latter module is also finitely generated over $\Z_p$.
	Hence the same is true for $Z_S$ and so $(Z_S)_{(p)} = 0$
	as desired. If $\zeta_p$ is not in $L$, we put $L' := L(\zeta_p)$
	and $\Delta := \Gal(L'/L) \simeq \Gal(L'_{\infty}/ L_{\infty})$.
	Let $Z_S'$ be the Iwasawa module $Z_S$ that corresponds to $L'$.
	We have shown that $Z_S'$ is a finitely generated $\Z_p$-module.
	However, there is a natural isomorphism $(Z_S')_{\Delta} \simeq Z_S$
	so that the $\mu$-invariant of $Z_S$ also vanishes. This proves the
	claim and thus (i). Lemmas \ref{lem:M+projective},
	\ref{lem:reflexive-projective} and 
	\ref{lem:first-prop-global}(ii) imply that both
	$((X_S)_{\mathfrak p^{\sharp}})^{+}$ and $(X_S)_{\mathfrak p}^{++}$ are
	projective $\Lambda_{\mathfrak p}(\mathcal{G})$-modules.
	By Corollary \ref{cor:Swan-Iwasawa} it now suffices to compute
	$\mathcal{Q}(\mathcal{G}) \otimes_{\Lambda(\mathcal{G})} X_S^{++}$.
	By \cite[Proposition 5.6.7]{MR2392026} we have
	\[
		\mathcal{Q}(\mathcal{G}) \otimes_{\Lambda(\mathcal{G})} (X_S^{++} \oplus N^{\ab}_{H_S}(p)) =  
		\mathcal{Q}(\mathcal{G}) \otimes_{\Lambda(\mathcal{G})} (X_S
		\oplus N^{\ab}_{H_S}(p)) \simeq
		\mathcal{Q}(\mathcal{G})^{d-1}.
	\]
	Since $\mathcal{Q}(\mathcal{G})$ is semisimple, (ii) now follows
	from \eqref{eqn:NabHSp}.
\end{proof}

\begin{theorem} \label{thm:iso-E_S}
	Let $\mathfrak p \in \boldsymbol{P}_0$ and assume that 
	$\mu_{L(\zeta_p)} = 0$ if $\mathfrak p = (p)$. Then the following hold.
	\begin{enumerate}
		\item
		We have $\pd_{\Lambda_{\mathfrak p}(\mathcal{G})}(E_{\mathfrak p}) = 
		\pd_{\Lambda_{\mathfrak p}(\mathcal{G})}((E_S)_{\mathfrak p}) \leq 1$;
		\item 
		if $\zeta_p \in L$ we have isomorphisms of 
		$\Lambda_{\mathfrak p}(\mathcal{G})$-modules
		\[
			E_{\mathfrak p} \simeq (E_S)_{\mathfrak p}  \simeq   
			\Z_p(1)_{\mathfrak p} \oplus
			\Lambda_{\mathfrak p}(\mathcal{G})^{r_2 + r_1 - r_1'} \oplus
			\bigoplus_{v \in S_{\infty}'} 
			\left(\Ind_{\mathcal{G}_v}^{\mathcal{G}} \Z_p
			\right)_{\mathfrak p};
		\]
		\item
		if $\zeta_p \not\in L$ we have isomorphisms of 
		$\Lambda_{\mathfrak p}(\mathcal{G})$-modules
		\[
		E_{\mathfrak p} \simeq (E_S)_{\mathfrak p}  \simeq   
		\Lambda_{\mathfrak p}(\mathcal{G})^{r_2 + r_1 - r_1'} \oplus
		\bigoplus_{v \in S_{\infty}'} 
		\left(\Ind_{\mathcal{G}_v}^{\mathcal{G}} \Z_p\right)_{\mathfrak p}.
		\]
	\end{enumerate}
\end{theorem}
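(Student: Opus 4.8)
The plan is to extract the structure of $E_{\mathfrak p}$ and $(E_S)_{\mathfrak p}$ from the top, respectively bottom, row of diagram \eqref{eqn:diagram}, combining the structure results already obtained for the semi-local modules (Proposition \ref{prop:semi-local}) and for $X_S$ (Theorem \ref{thm:iso-X_S}) with the Iwasawa-theoretic analogue of Swan's theorem (Corollary \ref{cor:Swan-Iwasawa}). I carry out the argument for $E$; the module $E_S$ is treated identically using the bottom row, and the two answers will be seen to coincide. As for (i): for $\mathfrak p \neq (p)$ the bound is immediate from Corollary \ref{cor:pd1}. For $\mathfrak p = (p)$ the hypothesis $\mu_{L(\zeta_p)} = 0$ forces $\mu_L = 0$ (descend from $L(\zeta_p)$ by taking coinvariants under $\Gal(L(\zeta_p)/L)$, of order prime to $p$), so $X_{nr}$ and its quotient $X_{cs}^S$ are finitely generated over $\Z_p$ and hence $(X_{nr})_{(p)} = (X_{cs}^S)_{(p)} = 0$. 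The top row of \eqref{eqn:diagram} localised at $(p)$ thus collapses to a short exact sequence $0 \to E_{(p)} \to (U_S)_{(p)} \to (X_S)_{(p)} \to 0$ in which $(U_S)_{(p)}$ is free (the induced $\Z_p(1)$-summands of Proposition \ref{prop:semi-local} vanish at $(p)$) and $\pd_{\Lambda_{(p)}(\mathcal{G})}((X_S)_{(p)}) \leq 1$ by Lemma \ref{lem:first-prop-global}(ii); hence $E_{(p)}$ is projective, and likewise $(E_S)_{(p)}$ via the bottom row.

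Next I would split off the torsion part. In each layer the torsion submodule of $\widehat{\mathcal{O}_{L_n}^{\times}}$ (resp.\ $\widehat{\mathcal{O}_{L_n,S}^{\times}}$) is the group of $p$-power roots of unity $\mu_{p^{\infty}}(L_n)$, with torsion-free quotient, and these facts pass to the inverse limit (the $\mu_{p^{\infty}}(L_n)$ being finite). Hence the $\Lambda(\Gamma_0)$-torsion submodules of $E$ and of $E_S$ both equal $\varprojlim_n \mu_{p^{\infty}}(L_n)$, which is the Tate module $\Z_p(1)$ if $\zeta_p \in L$ and is $0$ otherwise. Lemma \ref{lem:E2-vanishes} gives an exact sequence $0 \to E^1(DE_{\mathfrak p}) \to E_{\mathfrak p} \to (E_{\mathfrak p})^{++} \to 0$ whose left-hand term is this torsion submodule, localised at $\mathfrak p$, hence is $(\Z_p(1))_{\mathfrak p}$ or $0$; since this has projective dimension at most $1$ (recall $(\Z_p(1))_{(p)} = 0$) and $E_{\mathfrak p}$ does too by (i), the reflexive module $(E_{\mathfrak p})^{++}$ has finite projective dimension and so is projective by Lemma \ref{lem:M+projective}. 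Therefore $E_{\mathfrak p} \simeq (\Z_p(1))_{\mathfrak p} \oplus (E_{\mathfrak p})^{++}$, and similarly for $E_S$, and it remains to identify the projective modules $(E_{\mathfrak p})^{++}$ and $((E_S)_{\mathfrak p})^{++}$.

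By Corollary \ref{cor:Swan-Iwasawa} it suffices to compute $\mathcal{Q}(\mathcal{G}) \otimes_{\Lambda(\mathcal{G})} E$, and since the canonical map $E \to E^{++}$ has $\Lambda(\Gamma_0)$-torsion kernel and cokernel we have $\mathcal{Q}(\mathcal{G}) \otimes E^{++} \simeq \mathcal{Q}(\mathcal{G}) \otimes E$. Tensoring the top row of \eqref{eqn:diagram} with the semisimple ring $\mathcal{Q}(\mathcal{G})$ and using that $X_{nr}$ is $\Lambda(\Gamma_0)$-torsion yields a (split) exact sequence $0 \to \mathcal{Q}(\mathcal{G}) \otimes E \to \mathcal{Q}(\mathcal{G}) \otimes U_S \to \mathcal{Q}(\mathcal{G}) \otimes X_S \to 0$. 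Proposition \ref{prop:semi-local} gives $\mathcal{Q}(\mathcal{G}) \otimes U_S \simeq \mathcal{Q}(\mathcal{G})^{[K:\Q]}$ (the induced $\Z_p(1)$-summands become $0$ over $\mathcal{Q}(\mathcal{G})$), and Theorem \ref{thm:iso-X_S}(ii) gives $\mathcal{Q}(\mathcal{G}) \otimes X_S \simeq \mathcal{Q}(\mathcal{G})^{r_2} \oplus \bigoplus_{v \in S_{\infty}'} \big( \mathcal{Q}(\mathcal{G}) \otimes \Ind_{\mathcal{G}_v}^{\mathcal{G}} \Z_p^- \big)$. For $v \in S_{\infty}'$ one has $\mathcal{G}_v \simeq \Z/2\Z$ and $p$ is odd (as $K$ is totally imaginary for $p=2$), whence $\Ind_{\mathcal{G}_v}^{\mathcal{G}} \Z_p \oplus \Ind_{\mathcal{G}_v}^{\mathcal{G}} \Z_p^- \simeq \Lambda(\mathcal{G})$; cancelling modules over $\mathcal{Q}(\mathcal{G})$ (where Krull--Schmidt holds) then forces $\mathcal{Q}(\mathcal{G}) \otimes E \simeq \mathcal{Q}(\mathcal{G})^{r_2 + r_1 - r_1'} \oplus \bigoplus_{v \in S_{\infty}'} \big( \mathcal{Q}(\mathcal{G}) \otimes \Ind_{\mathcal{G}_v}^{\mathcal{G}} \Z_p \big)$, using $[K:\Q] - r_2 = r_1 + r_2$. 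As $\Lambda_{\mathfrak p}(\mathcal{G})^{r_2 + r_1 - r_1'} \oplus \bigoplus_{v \in S_{\infty}'} (\Ind_{\mathcal{G}_v}^{\mathcal{G}} \Z_p)_{\mathfrak p}$ is a projective $\Lambda_{\mathfrak p}(\mathcal{G})$-module with the same base change to $\mathcal{Q}(\mathcal{G})$, Corollary \ref{cor:Swan-Iwasawa} identifies it with $(E_{\mathfrak p})^{++}$, and with the splitting this gives (ii) and (iii) for $E$. Finally $\mathcal{Q}(\mathcal{G}) \otimes E_S = \mathcal{Q}(\mathcal{G}) \otimes A_S = \mathcal{Q}(\mathcal{G}) \otimes U_S$ (the modules $A_v$ and $U_v$ agree for $v \nmid p$ and differ for $v \mid p$ only by the $\Lambda(\Gamma_0)$-torsion module $\Ind_{\mathcal{G}_v}^{\mathcal{G}}\Z_p$ coming from \eqref{eqn:ramified-ses}), so the same computation applies to $E_S$; as the torsion parts of $E$ and $E_S$ also coincide we obtain $E_{\mathfrak p} \simeq (E_S)_{\mathfrak p}$, which in particular yields the equality of projective dimensions in (i).

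I expect the crux to be the $\mathcal{Q}(\mathcal{G})$-level bookkeeping at the archimedean places $v \in S_{\infty}'$: one must verify that the complement of $\mathcal{Q}(\mathcal{G}) \otimes X_S$ inside the free module $\mathcal{Q}(\mathcal{G}) \otimes U_S$ comes out as a sum of \emph{trivial} induced characters of the $\mathcal{G}_v$ rather than sign characters — which is exactly where the identity $\Ind_{\mathcal{G}_v}^{\mathcal{G}}\Z_p \oplus \Ind_{\mathcal{G}_v}^{\mathcal{G}}\Z_p^- \simeq \Lambda(\mathcal{G})$ and the cancellation property of $\mathcal{Q}(\mathcal{G})$ enter — and, relatedly, in checking that the resulting description is literally the same for $E$ and for $E_S$, which rests on $E_S/E$ being $\Z_p$-torsion-free and $\Lambda(\Gamma_0)$-torsion.
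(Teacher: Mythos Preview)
Your overall strategy matches the paper's: establish $\pd \leq 1$ from diagram \eqref{eqn:diagram}, split off the torsion $E^1(D-)$, and identify the projective complement via Corollary \ref{cor:Swan-Iwasawa} after computing base-change to $\mathcal{Q}(\mathcal{G})$. Your handling of the $\mathcal{Q}(\mathcal{G})$-level bookkeeping with $\Ind_{\mathcal{G}_v}^{\mathcal{G}} \Z_p \oplus \Ind_{\mathcal{G}_v}^{\mathcal{G}} \Z_p^- \simeq \Lambda(\mathcal{G})$ is correct and is exactly what the paper does.

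The gap is in your identification of $E^1(DE_{\mathfrak p})$. Your argument --- the torsion of $\widehat{\mathcal{O}_{L_n}^{\times}}$ is $\mu_{p^{\infty}}(L_n)$ with $\Z_p$-free quotient, and ``these facts pass to the inverse limit'' --- only shows that the $\Z_p$-torsion of $E$ equals $\varprojlim_n \mu_{p^{\infty}}(L_n)$. It does not show that the $\Lambda(\Gamma_0)$-torsion is no larger: an inverse limit of $\Z_p$-free modules is $\Z_p$-torsion-free, but a $\Z_p$-free module can very well be $\Lambda(\Gamma_0)$-torsion (e.g.\ $\Z_p$ with trivial $\Gamma_0$-action is killed by $\gamma_0 - 1$). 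Since $E^1(DE_{\mathfrak p})$ is precisely the $\Lambda_{\mathfrak p}(\Gamma_0)$-torsion of $E_{\mathfrak p}$, the stronger statement is what you need, and it does not follow formally from the finite-level picture. The paper does not attempt this direct computation: it invokes the proof of \cite[Theorem 11.3.11(ii)]{MR2392026} to obtain $E^1(D(E_S)_{\mathfrak p}) \simeq \Z_p(1)_{\mathfrak p}$ (that argument proceeds via duality and the homotopy machinery, not a naive limit), and then recovers $E^1(DE_{\mathfrak p})$ by the sandwich $E^1(DE_{\mathfrak p}) \subseteq E^1(D(E_S)_{\mathfrak p}) \subseteq E_{\mathfrak p}$. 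A minor slip: your equality $\mathcal{Q}(\mathcal{G}) \otimes E_S = \mathcal{Q}(\mathcal{G}) \otimes A_S$ is false as written; what you mean, and what your parenthetical actually justifies, is $\mathcal{Q}(\mathcal{G}) \otimes A_S = \mathcal{Q}(\mathcal{G}) \otimes U_S$, so that the bottom-row computation for $E_S$ matches the top-row one for $E$.
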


\begin{proof}
	We first show that the projective dimension of
	$E_{\mathfrak p}$ and $(E_S)_{\mathfrak p}$ is at most $1$.
	For this we only have to treat the case $\mathfrak p = (p)$.
	Otherwise we apply Corollary \ref{cor:pd1}. As the $\mu$-invariant
	of $X_{nr}$ vanishes by assumption, we obtain from diagram
	\eqref{eqn:diagram} two exact sequences of
	$\Lambda_{(p)}(\mathcal{G})$-modules
	\[
		0 \longrightarrow E_{(p)} \longrightarrow (U_S)_{(p)} \longrightarrow
		(X_S)_{(p)} \longrightarrow 0,
	\]
	\[
	0 \longrightarrow (E_S)_{(p)} \longrightarrow (A_S)_{(p)} \longrightarrow
	(X_S)_{(p)} \longrightarrow 0.
	\]
	Since the projective dimension of $(U_S)_{(p)}$, $(A_S)_{(p)}$
	and $(X_S)_{(p)}$ is at most $1$ by
	Proposition \ref{prop:semi-local} and Lemma
	\ref{lem:first-prop-global}(ii), 
	the same is true for
	$E_{(p)}$ and $(E_S)_{(p)}$.
	
	Now let $\mathfrak p \in \boldsymbol{P}_0$ be arbitrary.
	It follows as in the proof of \cite[Theorem 11.3.11(ii)]{MR2392026}
	that $E^1(D(E_S)_{\mathfrak p}) \simeq \Z_p(1)_{\mathfrak p}$
	if $\zeta_p \in L$ and that $E^1(D(E_S)_{\mathfrak p})$ vanishes
	otherwise. In both cases we have
	$\pd_{\Lambda_{\mathfrak p}(\mathcal{G})}(E^1(D(E_S)_{\mathfrak p})) 
	\leq 1$ and thus $(E_S)_{\mathfrak p}^{++}$ is projective
	by Lemma \ref{lem:M+projective}. It follows that $(E_S)_{\mathfrak p}$
	decomposes into a direct sum
	\[
		(E_S)_{\mathfrak p} \simeq E^1(D(E_S)_{\mathfrak p}) \oplus
		(E_S)_{\mathfrak p}^{++}.
	\]
	The inclusions $E^1(DE_{\mathfrak p}) \subseteq E^1(D(E_S)_{\mathfrak p})
	\subseteq E_{\mathfrak p}$ imply that in fact $E^1(DE_{\mathfrak p}) =
	E^1(D(E_S)_{\mathfrak p})$. It follows as above that the module
	$E_{\mathfrak p}^{++}$ is projective and that we have an isomorphism
	\[
	E_{\mathfrak p} \simeq E^1(DE_{\mathfrak p}) \oplus
	E_{\mathfrak p}^{++}.
	\]
	In particular, we obtain (i). By Corollary \ref{cor:Swan-Iwasawa}
	it now suffices to compute
	\[
		\mathcal{Q}(\mathcal{G}) \otimes_{\Lambda(\mathcal{G})} E_S^{++}
		= \mathcal{Q}(\mathcal{G}) \otimes_{\Lambda(\mathcal{G})} E_S
		= \mathcal{Q}(\mathcal{G}) \otimes_{\Lambda(\mathcal{G})} E
		= \mathcal{Q}(\mathcal{G}) \otimes_{\Lambda(\mathcal{G})} E^{++}.
	\]
	We deduce from diagram \eqref{eqn:diagram} 
	and Proposition \ref{prop:semi-local}
	that we have isomorphisms
	of $\mathcal{Q}(\mathcal{G})$-modules
	\[
		\mathcal{Q}(\mathcal{G}) \otimes_{\Lambda(\mathcal{G})}
		(E_S \oplus X_S) \simeq 
		\mathcal{Q}(\mathcal{G}) \otimes_{\Lambda(\mathcal{G})} A_S
		\simeq \mathcal{Q}(\mathcal{G})^n.
	\]
	As $\mathcal{Q}(\mathcal{G})$ is semisimple, the result follows
	from Theorem \ref{thm:iso-X_S}.
\end{proof}

\begin{remark}
	Let $L_{\infty}$ be an arbitrary $\Z_p$-extension of $L$ such that
	$L_{\infty}/K$ is again a Galois extension. Assuming the validity
	of the weak Leopoldt conjecture, it seems to be likely that one can prove
	analogues of Theorems \ref{thm:iso-X_S} and \ref{thm:iso-E_S}.
	The main obstacle occurs in the case $\mathfrak p = (p)$
	because the relevant $\mu$-invariant does not vanish in general.
\end{remark}

\nocite*
\bibliography{swan-iwasawa-bib}{}
\bibliographystyle{amsalpha}

\end{document}